\newcommand{\set}[1]{\left\{#1\right\}}
\newcommand{\R}{\mathbb R}
\newcommand{\C}{\mathbb C}
\newcommand{\Z}{\mathbb Z}
\newcommand{\Zz}{\Z/2\Z}
\newcommand{\bbone}{\mathbf{1}}
\newcommand{\eps}{\varepsilon}
\DeclareMathOperator{\alim}{a-lim}
\newcommand{\Sc}{\boldsymbol \nabla}
\newcommand{\Ahat}{\hat{A}}
\newcommand{\detline}[1]{\mathrm{det}({#1})}
\DeclareMathOperator{\sTr}{Tr}
\newcommand{\Tr}{\sTr_0}
\newcommand{\Sp}{\mathbb{S}}
\newcommand{\dslash}{\slashed{D}}
\DeclareMathOperator{\spec}{spec}
\newcommand{\Det}[1]{\mathrm{det}\,#1}
\newcommand{\bb}[1]{{\left(#1\right)}}
\newcommand{\Ll}{{L^2}}
\newcommand{\Forms}{\Omega}
\newcommand{\stimes}{\otimes}
\DeclareMathOperator{\End}{End}
\DeclareMathOperator{\Hom}{Hom}
\newcommand{\even}{\mathrm{ev}}
\newcommand{\odd}{\mathrm{odd}}
\DeclareMathOperator{\sCh}{ch}
\newcommand{\ud}{\mathrm{d}}
\newcommand{\secfun}{\mathrm{I\!\!\:\!\:I}}
\newcommand{\Curv}[1]{\Omega^{#1}}
\newcommand{\curv}[1]{\mathrm{Curv}\left({#1}\right)}
\DeclareMathOperator{\dist}{dist}
\DeclareMathOperator{\Cliff}{Cl}
\newcommand{\Dirac}{\mathcal{D}}
\newcommand{\Hol}[2]{\mathrm{Hol}_{#1}\,#2}
\DeclareMathOperator{\Index}{index}
\newcommand{\extm}{e}
\DeclareMathOperator{\LIM}{LIM}
\newcommand{\intm}{\imath}
\newcommand{\isarrow}{\xrightarrow{\;\sim\;} }
\DeclareMathOperator{\id}{id}
\numberwithin{equation}{section}
\newtheorem{theorem}[equation]{Theorem}
\newtheorem{lemma}[equation]{Lemma}
\newtheorem{proposition}[equation]{Proposition}
\theoremstyle{definition}
\newtheorem{definition}[equation]{Definition}
\theoremstyle{remark}
\newtheorem{remark}[equation]{Remark}
\begin{document}
\title{Superconnections and Index Theory}

\author{Alexander Kahle}
\email{{kahle@uni-math.gwdg.de}}
\address{Mathematisches Institut, Universit\"at G\"ottingen, Bunsenstr. 3-5, 37073 G\"ottingen, GERMANY.}

\begin{abstract}
We investigate index theory in the context of Dirac operators coupled to superconnections. In particular, we prove a local index theorem for such operators, and for families of such operators. We investigate $\eta$-invariants and prove an APS-theorem, and construct a geometric determinant line bundle for families of such operators, computing its curvature and holonomy in terms of familiar index theoretic quantities.
\end{abstract}

\maketitle

\section{Introduction}
This paper reports on work done to extend the framework of geometric index theory to Dirac operators coupled to superconnections. The goal of this paper is to present a summary of the results in \cite{K1}, highlighting the main theorems and constructions, and sketching their proofs.

The notion of a superconnection was introduced to mathematics by Daniel Quillen \cite{Q1} as a generalisation of the notion of a connection to the category of $\Zz$-graded vector bundles. It has found wide application, especially in physics. 

We recall that a superconnection on a smooth $\Zz$-graded vector bundle $V\to X$ is an odd derivation on $\Forms(X;V)^\bullet$ as a module over the dg-algebra $\Forms^\bullet(X)$. Concretely, every superconnection $\Sc$ may be written
\begin{equation}\label{eq:introscdecomp}
\Sc=\nabla+\sum_i\omega_i,
\end{equation}
where $\nabla$ is a connection on $V$, and each $\omega_i$ is an odd element of $\Forms^i(X;\End(V))$.

One may form Dirac operators out of superconnections in a manner entirely analogous to the construction of Dirac operators from ordinary connections. As with connections, there is a notion of unitarity for superconnections, and Dirac operators coupled to unitary superconnections are formally self-adjoint. On compact manifolds they are elliptic first-order differential operators, and share many of the analytic properties of Dirac operators coupled to connections. 

One appealing aspect of Dirac operators coupled to superconnections is their behaviour for families. We will see in Sec.~\ref{sec:families} that, for an appropriate geometric family\footnote{The appropriate notion of family in this case is a so-called Riemannian map: see Def.~\ref{def:riemannmap}.} $\pi:X\to Y$ with compact and spin fibres, that one may define the push-forward of a complex hermitian $\Zz$-graded vector bundle  $V\to X$ with superconnection $\Sc$, which will itself be a complex hermitian $\Zz$-graded vector bundle $\pi_!V\to Y$ with superconnection $\pi_!\Sc$. This construction is strictly functorial: if we have a further Riemannian map $\rho:Y\to Z$ with compact and spin fibres, then $\rho_!(\pi_!V,\pi_!\Sc)\cong(\rho\circ\pi)_!(V,\Sc)$. In particular, setting $Z=\{\mathrm{pt}\}$, one obtains the equality $\Dirac_Y(\pi_!\Sc)=\Dirac_X(\Sc)$.

This behaviour makes $\Zz$-graded vector bundles with superconnections appealing objects to take as cycles in models for differential $K$-theory\footnote{Differential cohomology theories are functors that extend cohomology theories by differential forms, and classes in a differential cohomology theory thus capture not only global topological aspects captured by the original cohomology theory, but also are sensitive to local information from the differential forms. Differential cohomology theories have found important application in physics: essentially classes classify gauge-equivalence classes of fields in abelian gauge theories. In particular, gauge equivalent classes of Ramond-Ramond fields are supposed to  be classified by differential $K$-theory. A good introduction to the subject from a physical point of view is \cite{FrDirac}. Differential $K$-theory also has interesting applications to index theory: many secondary invariants find natural homes there. Some basic mathematical references for the subject are \cite{CheSi}, \cite{Del}, both of which are foundational papers; \cite{HoSi}, where differential versions of generalised cohomology theories are constructed; and \cite{BuSchi}, which gives a construction of differential $K$-theory in terms of natural geometric cycles.}. As Quillen notes \cite{Q1} one may model both even and odd degree $K$-theory classes in terms of $\Zz$-graded vector bundles with superconnection, suggesting that one might take these as cycles in a geometric model for differential $K$-theory. Moreover, $\Zz$-graded vector bundles with superconnections have a natural notion of support associated to them, and their Chern character is sharply peaked about this support. One might thus imagine that a model of differential $K$-theory based on $\Zz$-graded vector bundles and superconnections would also naturally capture classes with support. In this context, the work described in this paper presents the basic results in local index theory needed when seeking to construct such a model.

 In this paper we extend many results of local index theory to Dirac operators coupled to superconnections:  we prove a local index theorem for these operators (first proved by Getzler in \cite{G2}), and extend this to families, obtaining a generalisation of Bismut's local families index theorem. We investigate the case when the manifold has a boundary, and prove an APS-like theorem. Finally, we construct a geometric determinant line bundle associated to a family of such operators, with metric, connection, and section. While our constructed is related to the general construction in \cite{BF1, BF2}, the connection obtained is different, being constructed in such a way as to obtain formulas for the curvature and holonomy of the line bundle reminiscent of those in \cite{DF1}.

Section \ref{sec:prelim} reviews the language of superconnections and sets notation. The first theorem of the paper is in Sec.~\ref{sec:localindex}: a local index theorem for Dirac operators coupled to superconnections on compact manifolds, first proved by Getzler \cite{G2}. His proof relies on stochastic techniques and the Feynman-Kac formula -- we provide a new proof based on elementary analysis of the heat kernel.

In order to state the local index theorem we need to set some notation, and introduce an $\R^\times$-action on the space of superconnections. \emph{This action will turn out to be the fundamental new ingredient that will pervade the study of the index theory of Dirac operators coupled to superconnections.} Let $X$ be a compact and spin Riemannian manifold, and $V\to X$ be a finite dimensional, complex and hermitian $\Zz$-graded vector bundle, and $\Sc$ is a unitary superconnection on it. In terms of the decomposition of $\Sc$ (Eq.~\ref{eq:introscdecomp}) it is given by
\begin{equation}\label{eq:introraction}
\Sc^s=\nabla +\sum_i|s|^{(1-i)/2}\omega_i,
\end{equation}
where $s\in\R^\times$. The heat semigroup associated to $\Dirac(\Sc^s)$, $e^{-t\Dirac(\Sc^s)^2}$,
is smoothing for $t>0$, and thus has an integral kernel associated to it, which we denote by $p_{t,s}(x,y)$ ($t\in\R^{>0}$ is time, $s\in\R^\times$ is the parameter on the superconnection, $x$ and $y\in X$). The local index theorem of Getzler then is as follows.
\begin{theorem}\label{th:introlocalindex} Let $X$ be a compact, spin and Riemannian manifold, with finite dimensional complex and hermitian $\Zz$-graded vector bundle $V\to X$ with unitary superconnection $\Sc$. Then
\begin{equation}\label{eq:introindex}
\lim_{t\to 0}\sTr p_{t,1/t}(x,x)\;\ud x=(2\pi i)^{-\dim X/2}\left[\Ahat(\Curv{X})\sCh\Sc\right]_\bb{n},
\end{equation}
where $p_{t,s}(x,y)$ is the integral kernel associated with the heat semigroup $\exp[-t\Dirac(\Sc^s)^2]$.\footnote{Throughout the paper, ``$\sTr$" denotes the supertrace. The ungraded trace is denoted by ``$\Tr$".}
\end{theorem}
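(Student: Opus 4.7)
The plan is to prove Theorem~\ref{th:introlocalindex} via a Getzler-style rescaling of the heat kernel, adapted to the $s$-dependence of the superconnection $\Sc^s$. Fix $x_0\in X$, work in geodesic normal coordinates on a ball of radius $\eps$ about $x_0$, and use radial parallel transport with respect to the spin connection on $\Sp$ and the connection $\nabla$ on $V$ to trivialise both bundles. Since $\sTr\, p_{t,1/t}(x_0,x_0)$ depends only on the germ of $\Dirac(\Sc^{1/t})$ near $x_0$, standard finite-propagation-speed and Duhamel estimates allow one to replace $X$ by $\R^n$ carrying the coordinate extensions of these local data, at the price of an error exponentially small in $1/t$ as $t\to 0$.

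The heart of the argument is the Getzler rescaling $\delta_{t}$, which acts on spatial coordinates by $x\mapsto\sqrt{t}\,x$, rescales time by $t$, and deforms Clifford multiplication $c(\ud x^{a})$ to $t^{-1/2}\extm^{a}-t^{1/2}\intm^{a}$. Under this deformation a form $\omega_{i}\in\Forms^{i}(X;\End V)$, which enters $\Dirac(\Sc)^{2}$ as an $i$-fold Clifford product, naively scales as $t^{-i/2}$. The decisive point is that the $\R^\times$-action of Eq.~\eqref{eq:introraction} with $s=1/t$ contributes the compensating factor $t^{(i-1)/2}$, so that together with the time rescaling by $t$ every term in $\Sc^{2}$ survives to the limit as a pure exterior multiplication of the appropriate degree. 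Combining this with the Lichnerowicz formula and the standard expansion of the connection Laplacian in normal coordinates, I expect $\delta_{t}\bigl(t\,\Dirac(\Sc^{1/t})^{2}\bigr)$ to converge locally uniformly as $t\to 0$ to the harmonic oscillator
\begin{equation*}
\mathcal{K}=-\sum_{a}\Bigl(\partial_{a}-\tfrac{1}{4}R_{ab}(x_0)\,x^{b}\Bigr)^{2}+\Sc^{2}\big|_{x_0},
\end{equation*}
acting on functions with values in the exterior algebra of $T_{x_{0}}^{*}X$ tensored with $\End(V)_{x_{0}}$, where $R_{ab}(x_{0})$ is the Riemannian curvature two-form at $x_{0}$ and $\Sc^{2}|_{x_{0}}$ is the curvature of the superconnection regarded as an inhomogeneous form.

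The heat kernel of $\mathcal{K}$ at time $1$ on the diagonal is computed by Mehler's formula, yielding
\begin{equation*}
k_{1}(0,0)=(2\pi i)^{-\dim X/2}\,\Ahat(\Curv{X})_{x_{0}}\,\exp\!\bigl(-\Sc^{2}|_{x_{0}}\bigr).
\end{equation*}
Convergence of $\delta_{t}\,p_{t,1/t}$ to $k_{1}$ on the diagonal then follows from a Duhamel argument combined with uniform Gaussian bounds on $p_{t,s}$; taking the supertrace selects the top form-degree component in the exterior algebra, reproducing precisely the right-hand side of Eq.~\eqref{eq:introindex}. The principal obstacle is making the Getzler filtration argument uniform in the coupled parameter $s=1/t$: the lower-order terms in the filtration and the cross terms between $\nabla$ and the various $\omega_{i}$ in $\Sc^{2}$ must all be shown to vanish in the limit at the correct rate, and the heat-kernel estimates used to pass to the limit must hold uniformly as $s$ and $t$ are tied together. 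Once this coupled uniformity is established, the remainder of the proof follows the classical heat-kernel derivation of the Atiyah--Singer theorem almost verbatim.
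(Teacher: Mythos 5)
Your overall strategy coincides with the paper's: Getzler rescaling coupled to the $\R^\times$-action so that every term of $\Sc^2$ survives as exterior multiplication, the harmonic-oscillator limit operator (your $\mathcal{K}$ is exactly the limit in Lemma~\ref{lem:heatlim}), Mehler's formula, and the supertrace selecting the top-degree component. But there is a genuine gap, and it sits exactly at the point you defer: "uniform Gaussian bounds on $p_{t,s}$" and "Duhamel estimates \dots at the price of an error exponentially small in $1/t$" are not available off the shelf in the coupled regime $s=1/t$. The operator $\Dirac(\Sc^{1/t})^2$ depends singularly on $t$: the low-degree pieces of $\Sc^{1/t}$ carry factors $|1/t|^{(1-i)/2}$, so zeroth- and first-order terms blow up as $t\to0$, and the standard parametrix construction, the off-diagonal decay estimates, and the Minakshisundaram--Pleijel expansion are all stated for a fixed operator. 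In particular, without a small-$t$ asymptotic expansion of $p_{t,1/t}(x,x)$ whose coefficients are controlled in the superconnection parameter, you cannot conclude that the divergent terms in the rescaled kernel vanish ($A_{j,I}=0$ for $2j<|I|$ in the paper's notation) nor even that the limit on the left of Eq.~\ref{eq:introindex} exists; this is why the paper states that the usual methods are insufficient here.

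The paper's proof supplies precisely this missing ingredient before running the Getzler argument. It establishes Prop.~\ref{prop:asymptote} (the coupled expansion $p_{t,s/t}(x,x)\sim\sum_k A_k(x,s)t^{k-\dim X/2}$) by a blow-up argument: transplant the geometry to $T_xX$ via the exponential map, extend it to be constant outside a ball, form the scaled heat operators $H_\eps$ of Eq.~\ref{eq:heps}, show the kernels $p^\eps_{t,s}$ depend smoothly on $\eps$ through $\eps=0$, and prove the comparison estimate Prop.~\ref{lem:kerneleps}, in which the constants are rational functions of $|s|^{1/2}$ -- this explicit polynomial-in-$|s|^{1/2}$ control is what lets the exponentially small error $e^{-c/t}$ beat the growth when one substitutes $s\mapsto s\eps^2/t$ and sets $\eps=\sqrt t$. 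A parity argument in $\eps$ then kills the half-integer powers, yielding the expansion, and only then does the rescaling argument you describe identify the constant term with $(2\pi i)^{-n/2}[\Ahat(\Curv{X})\sCh\Sc]_\bb{n}$. To complete your proposal you would need either to reproduce this kind of uniform-in-$s$ analysis (tracking how every elliptic and parabolic estimate depends on $s$), or to find an alternative route to the coupled asymptotic expansion; asserting that the classical derivation goes through "almost verbatim" once uniformity is established concedes rather than closes the central difficulty.
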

The $\R^\times$-action on superconnections is \emph{crucial} in the statement -- the small $t$ limit of $\sTr p_{t,s}(x,x)$ does not in general exist, and it is only when the parameter is allowed to vary simultaneously that one gets convergence. This however introduces a formidable technical difficulty: understanding the small $t$ behaviour of $p_{t,1/t}(x,x)$. As remarked, Getzler does so using stochastic techniques. We take an entirely different, and to our knowledge, novel approach (the idea is sketched out for Dirac operators coupled ordinary connections in Freed's online notes on index theory\cite{FrN}). We use parabolic scaling to relate the behaviour  the heat kernel at small time with the superconnection appropriately scaled to the behaviour of the heat kernel at time 1 with an unscaled superconnection but with space blown up. By excising a neighbourhood of the point of interest, and gluing it into $\R^{\dim X}$ we are able to understand the behaviour of the heat kernel under blow up, and thus the behaviour of the heat kernel at small times coupled to the parameter on the superconnection.

Section \ref{sec:boundary} examines Dirac operators coupled to superconnections on manifolds with boundary. We define an $\eta$-invariant modulo $\Z$ for Dirac operators coupled to superconnections and proves an APS-theorem for the invariant. Again, the presence of the $\R^\times$-action on superconnections in Th.~\ref{th:introlocalindex} presents difficulties: in order to have a hope of recovering a recognisable APS-theorem we need to take the scaling into account in defining $\eta$-invariants. We do so by defining the invariants geometrically: the $\eta$-invariant of a Dirac operator coupled to a superconnection is defined in terms of the $\eta$-invariant of a Dirac operator coupled to an ordinary connection and the Chern-Simons form between this connection and the superconnection in question. Our guiding principle in defining the $\eta$-invariant this way was to obtain an invariant that satisfied an APS theorem; that is, we wanted the $\eta$-invariant on the boundary to be computed by the integral of the $\Ahat(\Omega)\sCh(\Sc)$ differential form on the interior, modulo integers. 

Bismut and Cheeger \cite{BisChe} also examine $\eta$-invariants coupled to superconnections, but define them differently, essentially as the renormalised count in the difference between the number of positive and negative eigenvalues of the Dirac operator coupled to the superconnection. Their definition is useful for their purposes (namely, as a test case to understand $\eta$-invariants for families of Dirac operators) but it does not take the $\R^\times$-action (Eq.~\ref{eq:introraction}) into account, and so their $\eta$-invariants do not satisfy an APS theorem in the form we desire. This is a general issue in investigating Dirac operators coupled to superconnections: one is forced to choose whether one wants definitions that follow the spirit of the spectral definitions, or whether one wants to recover theorems in terms of geometric quantities like the Chern character. 

Section \ref{sec:families} discusses families of Dirac operators coupled to superconnections. We review the geometry of Riemannian families, and associate to a family of Dirac operators coupled to superconnections a push-forward superconnection that is strictly functorial. This pushforward may be thought to represent the element in $K$-theory given by the families index theorem. We compute the Chern character of the pushed-forward superconnection. This section extends Bismut's work for families of Dirac operators coupled to connections \cite{Bis}. As with the local index theorem, the $\R^\times$-action on superconnections enters crucially into the statement of the families index theorem for Dirac operators coupled to superconnections (Th.~\ref{th:families}). Our proof of Th.~\ref{th:families} follows that of the local index theorem outlined in Sec.~\ref{sec:localindex}.

Section \ref{sec:determinant} defines a geometric line bundle, the determinant line bundle, associated to a family of Dirac operators coupled to superconnections, and computes its curvature and holonomy. 

Again we are faced with a dichotomy: we may either ignore the $\R^\times$-action on superconnections or include it. In the former case, the work in \cite{BF1,BF2} goes through directly to give a geometric determinant line bundle, but its curvature and holonomy are not computed by familiar quantities, but simply by certain terms in the small $t$ asymptotic expansion of the heat kernel associated with the family of Dirac operators. Our goal is to have the curvature and holonomy of the line bundle computed by familiar geometric quantities, and we must thus take the $\R^\times$-action into account. In order to understand how to do so, we begin Sec.~\ref{sec:determinant} with an examination of a simpler finite dimensional case.

Our determinant line bundle is constructed as follows. We use the construction of Quillen \cite{Q2} (for the determinant line bundle associated to a family of $\bar{\partial}$ operators) and Bismut and Freed \cite{BF1,BF2} (for the determinant line bundle associated to a family of Dirac operators coupled to connections) to define the line bundle with section and metric. It is in constructing the connection where the novelty of our construction appears. We modify the connection obtained from the construction of \cite{BF1} in a canonical manner, in order to ensure that its curvature is computed by
\[
(2\pi i)^{-\frac{1}{2}\dim{X/Y}}\left[\pi_*\Ahat(\Omega^{X/Y})\sCh{\Sc}\right]_\bb{2}.
\] 
We show that the holonomy of this connection is computed in terms of the $\eta$-invariant reduced mod $\Z$, in a manner similar to that in \cite{BF1}.

Determinant line bundles occur in Quantum Field Theory when the theory contains fermionic fields. In this case integration over the fermions of the exponentiated action gives rise to a ``determinant of Dirac" term that may be interpreted as lying in a determinant line bundle. When the action contains a Dirac operator coupled to a superconnection, one should obtain a section of the determinant line bundle for a family of Dirac operators coupled to superconnections. Indeed, Lukic and Moore \cite{LM} investigated the quantum integrand in 11-dimensional supergravity  (building on work by Freed and Moore \cite{FM}) and interpreted the integrand in precisely this way. Questions about anomalies become questions about the curvature and holonomy of the determinant line bundle: the former is the ``local" anomaly, the latter the ``global" anomaly. In their paper, Moore and Lukic found that fluxes had to be introduced for anomaly cancellation. However, they used a ``spectral" definition of the determinant line bundle for a family of Dirac operators coupled to superconnections, which does not take the $\R^\times$-action on superconnections into account -- it is natural to wonder whether the ``geometric" determinant line bundle presented here is perhaps more appropriate, and investigate whether anomalies might actually cancel directly in this setting.

To summarise: the essential novelty that pervades the index theory of Dirac operators coupled to superconnections is the presence of the $\R^\times$-action (Eq.~\ref{eq:introraction}). It presents numerous technical difficulties, both in the proof of the index theorems, and in the definition of secondary invariants and constructions associated to these operators. However, taken properly into account the basic theorems and formulas for Dirac operators coupled to superconnections end up very similar to those for Dirac operators coupled to ordinary connections.
\subsection*{Acknowlegements} The author is very grateful to Dan Freed for his patience and advice, and to the referee for the careful reading of this paper, and the consequent insightful and helpful remarks.
\section{Preliminaries}\label{sec:prelim}
We briefly review the language $\Zz$-graded vector bundles and Quillen's notion of superconnections \cite{Q1}. A comprehensive and beautiful discussion may be found in \cite{DelMor}.

A $\Zz$-graded vector space $V$ is a vector space along with a decomposition $V=V^0\oplus V^1$. The zero summand in the decomposition is called \emph{even}, and the one summand, \emph{odd}. Associated to a $\Zz$-graded vector bundle is a canonical endomorphism, the \emph{grading} endomorphism, $\eps:V\to V$, which acts as the (negative of the) identity on even (odd) elements. The tensor product of two $\Zz$-graded vector spaces is naturally $\Zz$-graded, the grading being given by the tensor product of their respective grading homomorphisms. There is a canonical isomorphism $V\stimes W\isarrow W\stimes V$ ($V$ and $W$ are $\Zz$-graded vector spaces) given by 
\[
v\stimes w\mapsto (-1)^{|v||w|}w\stimes v,
\]
where $v\in V$, $w\in W$ and $|\cdot|$ gives the degree of a homogeneous element of a $\Zz$-graded vector space. 

A $\Zz$-graded algebra is an algebra along with a $\Zz$-grading, such that the grading endomorphism is an algebra homomorphism. The basic example of these is furnished by the endomorphism algebra $\End(V)$ of a $\Zz$-graded vector bundle $V$. Its grading is induced from the identification $\End(V)\cong V\stimes V^*$: the even elements are those that preserve the grading, the odd ones those that reverse it.  Care must be taken with the tensor product of two $\Zz$-graded algebras. Suppose $A$ and $B$ are both $\Zz$-graded algebras. Then $A\stimes B$ is again a $\Zz$-graded algebra, with the multiplication defined (on homogeneous elements) by
\[
(a\stimes b)\cdot(a'\stimes b')=(-1)^{|a'||b|}(a\cdot a')\stimes(b\cdot b'),
\]
where $a$, $a'\in A$ and $b$, $b'\in B$.
The \emph{commutator} on a $\Zz$-graded algebra $A$ is defined by
\[
[\cdot,\cdot]:a\stimes b\to a\cdot b-(-1)^{|a||b|}b\cdot a,
\]
where $a$, $b\in A$. Pairs of elements that commutator vanishes on are said to commute. A trace on a $\Zz$-graded algebra is a homomorphism to the ground field that vanishes on the image of the commutator. The algebra of endomorphisms of a $\Zz$-graded algebra has a canonical trace defined by
\[
\sTr:\End(V)\isarrow V\stimes V^*\isarrow V^*\stimes V\rightarrow k,
\]
where $k$ is the ground field, and the last map is the pairing between $V$ and $V^*$. In terms of the ungraded trace $\Tr$ on $\End(V)$ this is given by $\sTr=\Tr\circ\eps$. 

One may of course extend the above discussion to families and talk of bundles of $\Zz$-graded vector spaces and algebras. We will always take these to be smooth, and either real or complex. Associated to a smooth manifold $X$ there is a canonical differential $\Zz$-graded algebra, the algebra of differential forms $\Forms^\bullet(X)$ with the de Rham differential $\ud$, where the grading is the cohomological degree reduced mod two. Given a $\Zz$-graded vector bundle $V\to X$, one may form the $\Forms^\bullet(X)$-module of forms with values in $V$,
\[
\Forms^\bullet(X;V)\equiv\Forms^\bullet(X)\stimes_{\Forms^0(X)}V.
\]
This is a left module for the $\Zz$-graded algebra of $\End(V)$-valued differential forms $
\Forms^\bullet(X;\End(V))
$. 

Quillen \cite{Q1} introduced the notion of the \emph{superconnection} as a generalisation of connections to the category of $\Zz$-graded vector bundles. A superconnection $\Sc$ on a $\Zz$-graded vector bundle $V$ is an \emph{odd} derivation on $\Forms^\bullet(X;V)$. As such, it obeys
\[
\Sc(\omega\stimes v)=(\ud\omega)\stimes v+(-1)^{|\omega|}\omega\stimes\Sc v,
\]
where $\omega\in\Forms^\bullet(X)$ is homogeneous, and $v\in V$. The space of superconnections on a vector bundle is affine, and modelled on $\Forms(X;\End(V))^\odd$: every superconnection admits a decomposition
\begin{equation}\label{eq:scdecomp}
\Sc=\nabla+\sum_i\omega_i,
\end{equation}
where $\Sc$ is a superconnection, $\nabla$ is an ordinary connection, and the $\omega_i\in\Forms^i(X;\End(V))$ are odd in the total grading. One associates a curvature to a superconnection by
\[
\curv{\Sc}\equiv\Sc^2.
\]
This is purely algebraic and even, that is $\curv{\Sc}\in\Forms(X;\End(V))^\even$. To the data of a $\Zz$-graded vector bundle $V$ with superconnection $\Sc$, Quillen associates a Chern character form by
\[
\sCh(\Sc)=\sTr\exp[-\curv{\Sc}].
\]
This is a closed differential form and the class it represents in de Rham cohomology is independent of the choice of superconnection on $V$. Indeed, the useful transgression formula
\begin{equation}\label{eq:transgression}
\frac{\partial}{\partial t}\sCh(\Sc_t)=-\ud\sTr[e^{-\Sc_t^2}\dot{\Sc}_t],
\end{equation}
where $\Sc_t$ is a smooth one parameter family of superconnections on $V$, and `` $\dot{}$ " denotes differentiation with respect to the parameter, shows this.

There is a crucial action of $\R^\times$ on the space of superconnections on $V$. In terms of the decomposition in Eq.\ref{eq:scdecomp}
it is given by
\begin{equation}\label{eq:raction}
\Sc^s=\nabla+\sum_i |s|^{(1-i)/2}\omega_i.
\end{equation}
\subsection{Clifford bundles and Dirac operators}
Let $X$ now be Riemannian and spin. Let $\Cliff(X)\to X$ be the bundle of Clifford algebras, with fibre at a typical point $x\in X$ being $\Cliff(T^* X, g)$, where $g$ is the metric on $X$. We denote by $\Sp(X)\to X$ the bundle of \emph{$\Zz$-graded} spinors\footnote{By this we mean the following. For $X$ even dimensional, we recall that the bundle of spinors has a natural $\Zz$-grading. For $\dim(X)$ odd, we take $\Sp(X)$ to be two copies of the ungraded bundle of spinors, and declare that $\Cliff(X)$ acts in a graded sense.} on $X$. We denote the left action of $\Cliff(X)$ on $\Sp(X)$ by $c(\cdot)$. Every \emph{$\Zz$-graded} $\Cliff(X)$ module $M\to X$ may be (non-canonically) decomposed as $M\cong\Sp(X)\stimes V$, where $V$ is a $\Zz$-graded vector bundle.

We are now in a position to define \emph{Dirac operators} associated to superconnections.
\begin{definition}\label{def:dirac} Let $X$ be a Riemannian and spin manifold. Let $V\to X$ be a $\Zz$-graded vector bundle, with superconnection $\Sc$. The \emph{Dirac operator} associated to this data is the first order linear differential operator defined by the sequence
\[
\Dirac(\Sc):\xymatrix@1{ \Gamma\left(\Sp(X)\stimes V \right)\ar[rrr]^-{(\nabla^\Sp\stimes \bbone)\oplus(\bbone\oplus\Sc)} &&&\Forms^\bullet(X;\,\Sp(X)\stimes V )\ar[r]^(.55){c(\cdot)} & \Gamma\left(\Sp(X)\stimes V \right) }.
\]
\end{definition}
There is a notion of unitarity for superconnections on complex and hermitian $\Zz$-graded vector bundles. Let $V\to X$ be such a bundle, and $\Sc$ be a superconnection on $V$. Locally we may write
\[
 \Sc=\ud+\sum_{i,j}\omega_{ij}\stimes E_{ij}
\]
where $\omega_{ij}\in\Forms^i(X)$, $E_{ij}\in\End^j(V)$, and $i+j=1\mod 2$. Then $\Sc$ is \emph{unitary} if
\[
 E_{ij}\text{ is }\begin{cases}
                          \text{hermitian}&\text{if $i=0,\,3\mod 4$},\\
                          \text{anti-hermitian}&\text{if $i=1,\,2\mod 4$}.
                         \end{cases}
\]
This restricts to the usual notion of unitary if $\Sc$ is a connection.

If $V\to X$ is complex and hermitian, and $\Sc$ is unitary, then the Dirac operator $\Dirac(\Sc)$ is formally self-adjoint. In terms of the decomposition of $\Gamma(\Sp(X)\stimes V)$ as a $\Zz$-graded vector bundle, $\Dirac(\Sc)$ is then block off-diagonal as follows:
\begin{equation}\label{eq:dslashdef}
\Dirac(\Sc)=\begin{pmatrix}
& \dslash(\Sc)^*\\
\dslash(\Sc) &
\end{pmatrix}.
\end{equation}
\section{A local index theorem}\label{sec:localindex}
Dirac operators coupled to superconnections (Def.~\ref{def:dirac}) are first order differential operators, and on compact manifolds, they are elliptic. They thus have a well defined index associated to them, which we wish to calculate. Henceforth we have the following setting: $X$ is a closed, Riemannian and spin manifold, $V\to X$ is a complex and hermitian $\Zz$-graded vector bundle, and $\Sc$ is a unitary superconnection on $V$. The Atiyah-Singer index theorem \cite{ASI} then shows
\begin{equation}\label{eq:indextheorem}
\Index \dslash(\Sc)=(2\pi i)^{-\dim X/2}\int_X\Ahat(\Curv{X})\sCh(\Sc).
\end{equation}
Our first task will be to find a local refinement for this statement, in the spirit of the local index theorem of Atiyah, Bott, Gilkey and Patodi \cite{ABP,Gilkey,Pat}. This local refinement was first stated and proved by Getzler \cite{G2}. We will sketch a different proof, elaborated in \cite{K1}. Before we can state the theorem, however, we need to discuss some of the theory of Dirac operators coupled to superconnections.

Dirac operators coupled to superconnections on compact manifolds are elliptic, and one may develop their theory in a manner completely analogous to that of Dirac operators coupled to ordinary connections. We will see, however, that the $\R^\times$-action introduced in Eq.~\ref{eq:raction} will enter crucially in the local index theorem: the constants in the various elliptic estimates pick up a dependence on the parameter of the action on superconnections, and it is importance to keep track of this dependence. The theory is carefully developed in \cite{K1}, and it is discovered that the constants in the important estimates become (non-constant) functions of the parameter $s$ in Eq.~\ref{eq:raction} which may be chosen to be rational functions in $|s|^{1/2}$. We will quote the required estimates as needed, referring the interested reader to \cite{K1} for proofs. As with Dirac operators coupled to ordinary connections, one discovers that when the superconnection is unitary, the Laplacian $\Dirac(\Sc)^2:\Ll(X;\Sp(X)\stimes V)\to \Ll(X;\Sp(X)\stimes V)$ is self adjoint, and may thus be diagonalised. Its spectrum is real, non-negative and discrete, with finite dimensional eigenspaces. Furthermore, the eigensections are smooth. The diagonalisation respects the $\Zz$-grading, so that each eigenspace is itself $\Zz$-graded, and the Dirac operator restricted to that eigenspace is an odd endomorphism. For non-zero eigenvalues it is an isomorphism.

Associated to the operator $\Dirac(\Sc)$ one has the heat semigroup:
$e^{-t\Dirac(\Sc)^2}.$
A priori, for each $t>0$, this is a bounded linear operator from $\Ll(X;\Sp(X)\stimes V)$ to $\Ll(X;\Sp(X)\stimes V)$, and trace class. It is in fact considerably better -- it is smoothing for each $t>0$, and smooth in $t$. It is thus represented by an integral kernel $p_t(x,y)$, $t\in\R^{>0}$, $x$, $y\in X$, smooth in all of its variables, and symmetric in $x$ and $y$. We call this the heat kernel associated to $\Sc$. If we consider the family $\Sc^s$ (defined in Eq.\ref{eq:raction}), we get a smooth family of heat kernels $p_{t,s}(x,y)$. The McKean-Singer formula \cite{MS} shows
\begin{equation}\label{eq:mckeansinger}
\Index \dslash(\Sc)=\int_X\ud x\;\sTr p_{t,s}(x,x),
\end{equation}
for any $t\in\R^{>0}$, $s\in\R^\times$. We are now in a position to state the local index theorem of Getzler \cite{G2}.
\begin{theorem}\label{th:localindex} Let $X$ be a compact, spin and Riemannian manifold, with finite dimensional complex and hermitian $\Zz$-graded vector bundle $V\to X$ with unitary superconnection $\Sc$. Then
\begin{equation}\label{eq:index}
\lim_{t\to 0}\sTr p_{t,1/t}(x,x)\;\ud x=(2\pi i)^{-\dim X/2}\left[\Ahat(\Curv{X})\sCh\Sc\right]_\bb{n},
\end{equation}
where $p_{t,s}(x,y)$ is the integral kernel associated with the heat semigroup $\exp[-t\Dirac(\Sc^s)^2]$.
\end{theorem}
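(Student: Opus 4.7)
The strategy is a Getzler-style rescaling argument adapted to the presence of the $\R^\times$-action, building on the parabolic-scaling/blow-up approach sketched in \cite{FrN}. The first step is localization. Standard off-diagonal heat-kernel estimates, with constants tracked as rational functions of $|s|^{1/2}$ as in \cite{K1}, show that for any $r>0$ the contribution to $p_{t,1/t}(x,x)$ coming from data outside the geodesic ball of radius $r$ about $x$ decays like $\exp(-cr^2/t)$ as $t\to 0$. Hence the limit in \eqref{eq:index} depends only on the germ of the geometric data at $x$.

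Next, I would excise a small geodesic ball about $x$ and transplant it into $\R^{\dim X}$ via normal coordinates: the Riemannian metric, spin structure, bundle $V$, and superconnection $\Sc$ are extended smoothly to all of $\R^{\dim X}$, agreeing with the original inside the ball and being trivial outside a slightly larger ball. The localization estimate above guarantees that the heat kernel of the resulting Dirac operator on $\R^{\dim X}$ differs from $p_{t,1/t}(x,x)$ by an error vanishing faster than any power of $t$. The problem is thereby reduced to analyzing a model heat kernel on Euclidean space.

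The core of the proof is then a parabolic rescaling in the Getzler spirit. Writing the point of evaluation as $\sqrt{t}\,u$ in normal coordinates and rescaling the Clifford action so that the Clifford filtration becomes a grading by $\Forms^\bullet_x$ in the limit, while simultaneously applying the $\R^\times$-action at parameter $s=1/t$, the operator $t\,\Dirac(\Sc^{1/t})^2$ converges as $t\to 0$ to a generalized harmonic oscillator
\[
H = -\sum_i\bigl(\partial_{u^i}+\tfrac{1}{4}R_{ij}u^j\bigr)^2 + F,
\]
acting on $\Forms^\bullet(\R^{\dim X})\stimes\End(V_x)$-valued functions, where $R$ encodes the Riemannian curvature 2-form at $x$ and $F$ encodes the surviving components of $\curv{\Sc}$ at $x$. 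The crucial point is that the scaling weights $|s|^{(1-i)/2}$ appearing in \eqref{eq:introraction} are precisely those needed to cancel the powers of $\sqrt{t}$ generated by the spatial and Clifford rescalings acting on each $\omega_i\in\Forms^i(X;\End(V))$, so that every component of $\Sc$ contributes a finite, nontrivial term to $F$, neither blowing up nor collapsing. The heat kernel $e^{-H}(0,0)$ is then given by Mehler's formula, and extracting the component in top form-degree yields, up to the factor $(2\pi i)^{-\dim X/2}$, precisely $\left[\Ahat(\Curv{X})\sCh\Sc\right]_\bb{n}$.

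The principal technical obstacle is upgrading the formal convergence of operators to an actual convergence statement for heat kernels, uniformly on compact subsets of $\R^{\dim X}$. One needs $t$-independent a priori estimates for the rescaled kernels and enough of their derivatives so that a subsequential limit exists by Arzel\`a-Ascoli, and one must then verify that any such limit satisfies the heat equation for $H$ with delta-function initial data and identify it with the Mehler kernel. The fact that the constants in the elliptic estimates of \cite{K1} are rational in $|s|^{1/2}$ is tailored precisely for this purpose: under the substitution $s=1/t$ their $t$-dependence is explicit, so that the coupling of the small-$t$ and large-$s$ limits can be controlled and the passage to the limit is justified.
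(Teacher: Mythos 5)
Your outline coincides with the paper's in its skeleton: localize via off-diagonal decay with constants tracked in $|s|^{1/2}$, transplant a geodesic ball into $T_xX\cong\R^{\dim X}$ with a cutoff, couple the Getzler rescaling to the $\R^\times$-action at $s=1/t$ so that each $\omega_i$ survives to a finite term in the limiting curvature (this is exactly Lemma \ref{lem:sclim}), obtain the harmonic-oscillator limit of Lemma \ref{lem:heatlim}, and read off the answer from Mehler's formula. Where you diverge is in how the passage to the limit is justified. The paper does \emph{not} prove convergence of the Getzler-rescaled kernels by compactness. Instead it first establishes the coupled asymptotic expansion of Prop.~\ref{prop:asymptote} by a purely geometric blow-up: the kernels $p^\eps_{t,s}$ of the operators $H_\eps$ in Eq.~\ref{eq:heps} (which involve only the spatial rescaling $T_\eps$, not the Clifford rescaling $U_\eps$) are shown by Fourier analysis to be \emph{smooth in $\eps$ through $\eps=0$} for $t$ bounded away from zero; the comparison estimate Prop.~\ref{lem:kerneleps}, applied twice, gives evenness in $\eps$ up to exponentially small error, and setting $\eps=\sqrt t$ converts the Taylor expansion in $\eps$ into the expansion of $p_{t,s/t}(x,x)$. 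Only then is the Clifford rescaling brought in, and it is handled \emph{algebraically}, at the level of the expansion coefficients $A_{j,I}$: comparison with the explicit Mehler kernel forces $A_{j,I}=0$ for $2j<|I|$ and identifies the constant term. Your route, by contrast, is the classical Getzler/BGV scheme of proving convergence of the rescaled kernels directly; for the theorem as stated (only the limit, not the full expansion, is needed) that is a legitimate alternative, and it would bypass Prop.~\ref{prop:asymptote} entirely.

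The weak point of your plan is precisely the step you flag as the ``principal technical obstacle,'' and the fix you propose does not address it. The uniform a priori estimates you need are for the heat kernels of the conjugated operators $\eps^2 S_\eps\Dirac(\Sc^{s/\eps^2})^2S_\eps^{-1}$ with $S_\eps=U_\eps T^*_\eps$; because $U_\eps$ scales Clifford/form degree $k$ by $\eps^{-k}$, the coefficients of these operators contain terms that are singular as $\eps\to0$ and only cancel in the limit, so $\eps$-uniform $C^k$ bounds do \emph{not} follow from the elliptic and parabolic estimates of \cite{K1} merely by substituting $s=1/t$ and tracking the rational dependence on $|s|^{1/2}$ — those estimates control the unconjugated operators. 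Supplying such uniform bounds is a genuine piece of work (it is the analytic heart of Getzler's original argument), and the paper's two-step route — analytic control only of the $T_\eps$-rescaled kernels, where smoothness in $\eps$ is available, followed by bookkeeping of the $U_\eps$-weights on the already-established asymptotic expansion — is designed exactly to avoid having to prove them. As written, your appeal to Arzel\`a--Ascoli is therefore an assertion of the hard estimate rather than a proof of it; either carry out the uniform bounds for the $S_\eps$-conjugated family, or restructure along the paper's lines.
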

Note the the appearance of the $\R^\times$-action (Eq.~\ref{eq:raction}) in the limit (Eq.~\ref{eq:index}). We also note that Th.~\ref{th:localindex} along with the McKean-Singer formula implies the index theorem (Th.~\ref{eq:indextheorem}) for Dirac operators coupled to superconnections.

Getzler proves Th.~\ref{th:localindex} using stochastic arguments and the Feynman-Kac formula. We sketch a proof relying on heat kernel methods and the Getzler scaling argument. We will see that an essential pre-requisite for the argument is the existence of a small $t$ asymptotic expansion for $p_{t, 1/t}(x,x)$, and it is to this that we turn our attention.
\subsection{A small $t$ asymptotic expansion of the heat kernel}
We wish to establish the following proposition.
\begin{proposition}\label{prop:asymptote}
The heat kernel has a small $t$ asymptotic expansion on the diagonal given by
\[
p_{t,s/t}(x,x)\sim \sum_{k\ge 0}A_k(x,s)t^{k-\dim X/2},
\]
the $A_k(x,s)\in \Cliff(X)_x\stimes\End(V_x)$ are local, and vary smoothly in $x$ and $s$.
\end{proposition}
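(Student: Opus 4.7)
The plan is to reduce the small $t$ behaviour of $p_{t,s/t}(x,x)$ to the smooth dependence at time $1$ of a heat kernel on a blown-up space, via parabolic rescaling combined with Getzler rescaling. The $\R^\times$-action ensures that, once the spatial scaling is performed, the resulting family of operators depends smoothly on the rescaling parameter.

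\textbf{Step 1: Localisation.} Because the $\Ll$-norm estimates for $\Dirac(\Sc^{s/t})$ depend on $s,t$ only through functions rational in $|s/t|^{1/2}$ (as catalogued in \cite{K1}), unit propagation speed for the wave equation and a Duhamel-type comparison show that $p_{t,s/t}(x,x)$ differs from the analogous kernel built on any fixed geodesic ball $\Ball{r}(x)$ by an error which is smooth and $O(t^\infty)$, uniformly for $s$ in compact subsets of $\R^\times$. The problem is therefore local.

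\textbf{Step 2: Parabolic and Getzler rescaling.} Fix $x \in X$, pick normal coordinates $u$ on $\Ball{r}(x)$, and a synchronous framing of $\Sp(X)\stimes V$ along radial geodesics; cut off and glue to obtain a fixed spin Riemannian manifold $\hat X \cong \R^{\dim X}$ carrying an extension of $(V,\Sc)$ that agrees with $(V,\Sc)$ on $\Ball{r/2}(x)$ and is trivial outside a larger ball. Put $\gr = \sqrt{t}$ and define the dilation $\delta_\gr: u \mapsto \gr u$ together with the usual Getzler rescaling on Clifford elements and form degrees. Pulling back $t\,\Dirac(\Sc^{s/t})^2$ under $\delta_\gr$ and this symbol rescaling produces, via a direct computation using the decomposition (\ref{eq:scdecomp}) and the definition (\ref{eq:raction}), a family of generalised Laplacians $L_{\gr,s}$ on $\R^{\dim X}$ whose coefficients extend smoothly from $\gr>0$ to $\gr=0$ and depend smoothly on $s$; the limit $L_{0,s}$ is a harmonic oscillator with coefficients in $\Cliff(X)_x \stimes \End(V_x)$.

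\textbf{Step 3: Expansion of the rescaled heat kernel.} Let $q_{\gr,s}(u,v)$ be the time-$1$ heat kernel of $L_{\gr,s}$. Expanding $L_{\gr,s} = L_{0,s} + \sum_{k\ge 1}\gr^k L_{k,s}$, the Duhamel / parametrix construction produces, for each $N$, an approximate heat kernel
\[
q_{\gr,s}^{(N)}(u,v) = \sum_{k=0}^{N}\gr^k Q_{k,s}(u,v),
\]
with $Q_{k,s}$ smooth in $u,v,s$ and local in $x$, such that $q_{\gr,s} - q_{\gr,s}^{(N)} = O(\gr^{N+1})$ in every $C^m$-norm on compact sets. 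Evaluating at $u=v=0$ gives an asymptotic expansion of $q_{\gr,s}(0,0)$ in powers of $\gr$ with coefficients smooth in $s$ and local in $x$.

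\textbf{Step 4: Unwinding.} Undoing the parabolic rescaling contributes an overall factor $\gr^{-\dim X}$, while undoing the Getzler rescaling injects the coefficients back into $\Cliff(X)_x \stimes \End(V_x)$. Setting $\gr=\sqrt{t}$, collecting terms, and invoking Step 1 yields
\[
p_{t,s/t}(x,x)\sim\sum_{k\ge 0} A_k(x,s)\, t^{k-\dim X/2},
\]
with $A_k(x,s)\in \Cliff(X)_x\stimes\End(V_x)$ local and smooth in $x$ and $s$.

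\textbf{Main obstacle.} The heart of the argument is Step 3: one must verify that the family $L_{\gr,s}$ is uniform enough in both $\gr$ and $s$ for the parametrix construction to yield a true asymptotic expansion (not merely a pointwise limit) and for the remainder estimates to be smooth in $s$. This is precisely where the care taken in \cite{K1} -- keeping the constants in the elliptic and heat-kernel estimates as explicit rational functions of $|s|^{1/2}$ -- becomes indispensable; without it, mixing the small-$t$ and $\R^\times$ scalings would destroy the regularity in $s$.
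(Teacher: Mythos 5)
Your strategy is recognisably different from the paper's. The paper deliberately postpones the Getzler (Clifford/form--degree) rescaling to the proof of the index theorem: Prop.~\ref{prop:asymptote} itself is proved with the \emph{spatial} (parabolic) rescaling only. One glues the geometry near $x$ into $T_xX$, forms $H_\eps=\partial_t+\eps^2T_\eps^*\Dirac(\Sc^{s/\eps^2})^2(T_\eps^*)^{-1}$, quotes (from \cite{K1}, via Fourier analysis) that its kernel at time $1$ is \emph{smooth in $\eps$ through $\eps=0$} and in $s$, and then transfers back to $X$ by a quantitative comparison estimate (Prop.~\ref{lem:kerneleps}) with error $C(\eps^2 s)e^{-c/t}$, $C$ rational in the square root of its argument. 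The asymptotic expansion is then just Taylor's theorem in $\eps$ at $\eps=0$, with $\eps=\sqrt{t}$. You instead fold the Getzler rescaling in from the start and propose a Volterra/parametrix expansion of the rescaled family $L_{\gr,s}$ around the Mehler kernel, uniformly in $\gr$ and $s$. That is a legitimate route in principle, but note that the analytic content you defer in your ``main obstacle'' (uniform remainder estimates on the noncompact model space, smooth in $s$, down to $\gr=0$) is exactly what the paper replaces by the smoothness-in-$\eps$ statement plus Prop.~\ref{lem:kerneleps}; as written, your Step 3 asserts rather than establishes the heart of the proposition.

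Two concrete gaps. First, your construction naturally produces an expansion of the rescaled kernel in powers of $\gr=\sqrt{t}$; undoing the Getzler rescaling mixes these powers with the Clifford degree, so what you get a priori is an expansion of $p_{t,s/t}(x,x)$ in \emph{half-integer} powers of $t$. The proposition asserts integer powers ($A_k t^{k-\dim X/2}$, $k\in\Z_{\ge0}$), and nothing in your argument excludes the odd terms. The paper handles this by applying the comparison estimate at both $+\eps$ and $-\eps$: both rescaled kernels approximate the same true kernel up to $O(e^{-c/\eps^2})$, so $p^\eps_{1,s}(0,0)$ is even in $\eps$ to all orders and the odd Taylor coefficients vanish; some such parity argument (which in your Getzler-rescaled setting also involves the form-degree parity) is missing from your proposal. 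Second, your Step 1 localisation is claimed ``uniformly for $s$ in compact subsets of $\R^\times$'', but in the application the superconnection parameter is $s/t\to\infty$, so the coefficients of $\Dirac(\Sc^{s/t})^2$ blow up as $t\to0$; the localisation error must be shown to be $O(t^\infty)$ in this coupled regime, which is exactly why the paper tracks the constants as rational functions of $|s|^{1/2}$ (so that $C(s/t)e^{-c/t}$ still vanishes to all orders). You invoke the rational dependence elsewhere, but the localisation step needs it explicitly; as stated it does not cover the regime the proposition is about.
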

Notice that this is different from the usual small $t$ asymptotic expansion of the heat kernel in that it couples the parameter in the $\R^\times$-action on superconnections (Eq.~\ref{eq:raction}) with time. The usual methods of proving the existence of the small $t$ asymptotic expansion seem insufficient to address the situation, and we are forced to use other techniques.

We approach the small $t$ behaviour of the heat kernel rather indirectly, using parabolic scaling to exchange the small $t$ behaviour for the behaviour of the heat kernel under blowup. To this end, we now focus our attention near a point $x\in X$. We will use the exponential map to identify a neighbourhood of $x$ with a ball in $T_xX$, and transfer the geometry to there. We will extend the geometry to all of $T_xX$ in a controlled manner, and consider the behaviour of the heat kernel for this geometry under the scaling $x\mapsto x/\eps$, and show that it varies smoothly around $\eps=0$. Finally we will use parabolic scaling to relate the small $t$ behaviour of the heat kernel on $X$ near $x$ to the small $\eps$ behaviour of the heat kernel on $T_xX$.

Let $r>0$ be such that 
\[
\exp:B_r(0)\to U
\]
be a diffeomorphism, where $B_r(x)\subset T_x(X)$, and $U\subset X$. Choose a monotone function $\rho\in C^\infty(\R)$ such that $\rho(x)=0$ for $x>r$, and $\rho(x)=1$ for $x<3r/4$. Pull back the geometry on $X$ to $B_r(0)$ with the exponential map and obtain a complex, hermitian $\Zz$-graded vector bundle $E\to B_r(0)$, with superconnection $\Sc^s$, and a metric $g$ and bundle of spinors $\Sp\to B_r(0)$ (we abuse notation slightly, suppressing the $\exp^*$). We trivialise $E\to B_r(0)$ and $\Sp\to B_r(0)$ by parallel transport along radial geodesics. We extend $E$ and $S$ trivially to all of $T_x(X)$, and extend the superconnection and metric using $\rho$, such that on the complement of $B_r(0)$ the geometry is constant (we set it equal to that at the origin). Precisely, define 
\[
 g=\rho\exp^*g+(1-\rho)g(0),
\]
and $\Sc^s$ similarly. We now have put a metric on $T_x(X)$, and endowed it with trivial, complex and hermitian $\Zz$-graded vector bundles $E$, $\Sp\to T_x(X)$, with $E$ endowed with the superconnection $\Sc^s$. 

We define the operators $T_\eps:T_x(X)\to T_x(X)$ by $T_\eps(\xi)=\eps \xi$. With the help of these operators, we define the family of heat operators
\begin{equation}\label{eq:heps}
 H_\eps=\frac{\partial}{\partial t}+\eps^2T^*_\eps \Dirac\left(\Sc^{s/\eps^2}\right)^2\left(T^*_\eps\right)^{-1}
\end{equation}
with heat kernels $p^\eps_{t,s}(\xi,\xi')$. One may show using Fourier analysis (see \cite{K1}, Ch.~3, Sec 3) that the heat kernels $p^\eps_{t,s}(\xi,\xi')$ vary smoothly in all of their variables (and crucially in $\eps$) for $t$ bounded away from zero.
 
The next lemma is key, relating the heat kernel on $X$ near $x$, and the scaled heat kernel on $T_xX$.
\begin{proposition}\label{lem:kerneleps}
For any $r'<3r/4$, there exist a constant $c$, and a function $C(u)\in\R[|u|^{-1/2},|u|^{1/2}]$ such that for $|\xi|<r'$, and $|\eps|$ small enough,
\[
\left|\exp^*p_{t,s}(0,\xi)-|\eps|^{-n}p^\eps_{t/\eps^2,\eps^2s}(0,\xi/\eps)\right|\le C\left(\eps^2s\right) e^{-c/t}.
\]
In fact, $c$ may be chosen to be ${r'}^2/4$.
\end{proposition}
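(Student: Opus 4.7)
The plan is to combine a parabolic rescaling identity with a standard Duhamel cutoff argument that compares heat kernels of two operators agreeing on a ball. The rescaling realises the right-hand side as a heat kernel on $T_xX$ for an operator whose geometric data coincides with the pulled-back data from $X$ inside $B_r(0)$; a Duhamel argument then controls the discrepancy coming from the region where the two geometries diverge.

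First, I would make the parabolic scaling identity implicit in (\ref{eq:heps}) explicit. A direct change-of-variables computation, paying careful attention to the interaction of $T_\eps$ with the $\R^\times$-action (\ref{eq:raction}) on the superconnection, shows that $|\eps|^{-n}p^\eps_{t/\eps^2,\eps^2s}(0,\xi/\eps)$ is the value at $(0,\xi)$, at time $t$, of the heat kernel $q_{t,s}(0,\xi)$ of $\Dirac(\Sc^s)^2$ on $T_xX$ equipped with its cutoff-extended geometry. Since the extended geometry was designed to agree with the $\exp^{*}$-pulled-back $X$-geometry on $B_{3r/4}(0)$, the two operators whose heat kernels are being compared agree in that ball, and the proposition reduces to bounding $\left|\exp^{*}p_{t,s}(0,\xi)-q_{t,s}(0,\xi)\right|$ for $|\xi|<r'$.

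Second, I would fix a smooth cutoff $\phi$ with $\phi\equiv 1$ on $B_{r''}(0)$ and $\supp\phi\subset B_{3r/4}(0)$ for some $r' < r'' < 3r/4$, and apply Duhamel's principle. Since $\phi\,q_{t,s}(0,\cdot)$ differs from $q_{t,s}(0,\cdot)$ only outside $B_{r''}(0)$ and satisfies the $\Dirac(\Sc^s)^2$-heat equation on the interior of $\supp\phi$ with the correct initial condition at $0$, Duhamel expresses the difference at $(0,\xi)$ as an integral over $[0,t]$ of the spatial convolution of $\exp^{*}p_{t-t',s}$ with $[\Dirac(\Sc^s)^2,\phi]\,q_{t',s}$. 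The commutator is a first-order operator supported in the annulus $\supp d\phi$, which lies at distance at least $r''-|\xi|$ from $\xi$; standard Gaussian off-diagonal estimates for heat kernels of Dirac Laplacians bound each factor by $C\,t^{-n/2}e^{-d^{2}/4t}$, and the convolution in time preserves the Gaussian form. Letting $r''\searrow r'$ yields the claimed constant $c={r'}^{2}/4$.

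The chief technical difficulty, and the reason $C$ depends on $\eps^{2}s$ rather than merely on $s$, is the bookkeeping of constants. Both the Gaussian off-diagonal estimates and the control of $[\Dirac(\Sc^s)^2,\phi]$ involve pointwise norms of the superconnection coefficients on $\supp d\phi$; under the $\R^\times$-action the form $\omega_{i}$ carries a factor $|s|^{(1-i)/2}$, and after unwinding the parabolic rescaling these translate into polynomial dependence on $|\eps^{2}s|^{\pm 1/2}$. Combined with the $s$-dependent elliptic estimates for $p^\eps_{t,s}$ quoted from \cite{K1}, one obtains prefactors of exactly the rational form asserted, namely $C(u)\in\R[|u|^{-1/2},|u|^{1/2}]$. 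The hardest step is not conceptual but combinatorial: verifying that no power of $\eps$ with the wrong sign accumulates through the various off-diagonal and commutator estimates, so that all $\eps$-contributions are subsumed into the single variable $\eps^{2}s$.
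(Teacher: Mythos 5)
Your proposal is correct and follows essentially the same route as the paper: the paper's proof is exactly a cutoff--Duhamel comparison --- it forms $\phi_t=p_{t,s}(x,\cdot)-\rho\,\eps^{-n}p^\eps_{t/\eps^2,\eps^2 s}(0,\exp^{-1}(\cdot)/\eps)$, observes that $H\phi_t$ is supported in the annulus where derivatives hit the cutoff, bounds it using the off-diagonal decay of the heat kernel with the superconnection parameter tracked in $\R[|u|^{-1/2},|u|^{1/2}]$, and concludes via the Duhamel integral $\psi_t$, uniqueness of solutions, the parabolic a priori estimate and Sobolev embedding. Your only deviations --- first undoing the parabolic scaling so the comparison is with the unscaled kernel on $T_xX$ rather than transplanting the scaled kernel to $X$, and finishing with pointwise Gaussian convolution bounds instead of the $H^l$ parabolic estimate --- are cosmetic.
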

\begin{proof}
We fix $x\in X$. Let $\delta=(3r/4-r')/4$ and $\rho\in C^\infty(X)$ be a monotone function such that $\rho(y)=1$ for $\dist(x,y)<r'$, and $\rho(y)=0$ for $\dist(x,y)>3r/4-\delta$. We define $\phi_t$ on $X$ as follows
\[
 \phi_t(y)=\begin{cases}
p_{t,s}(x,y)-\rho(y)\eps^{-n}p^\eps_{t/\eps^2,\eps^2s}(0,\exp^{-1}(y)/\eps)&y\in U,\\
p_{t,s}(x,y)&y\in X\setminus U.
\end{cases}
\]
We wish to estimate the $C^k$-norm of $H\phi_t$, where
\[
 H=\frac{\partial}{\partial t}+\Dirac\left(\Sc^s\right)^2
\]
is the heat operator associated with $\Sc^s$. By definition 
\[
 Hp_{t,s}(x,y)=0,
\]
and a short computation shows
\[
 Hp^\eps_{t/\eps^2,\eps^2s}(0,\exp^{-1}(y)/\eps)=0,
\]
for $y\in U$. Thus the only contribution to $H\phi_t$ comes from
\[
H\left(\rho(y)\eps^{-n}p^\eps_{t/\eps^2,\eps^2s}(0,\exp^{-1}(y)/\eps)\right)
\]
for $y$ such that $r'<\dist(x,y)<3r/4-\delta$. A little algebra shows that  
\begin{multline*}
H\phi_t(y)=-\eps^{-n}\bigg[c(\nabla^2\rho)p^\eps_{t/\eps^2,\eps^2 s}(0,\exp^{-1}(y)/\eps)-2\left\langle\ud\rho,\nabla p^\eps_{t/\eps^2,\eps^2 s}(0,\exp^{-1}(y)/\eps)\right\rangle\\
+\sum_{i\ge2}[c(\omega^s_i),c(\ud \rho)]p^\eps_{t/\eps^2,\eps^2 s}(0,\exp^{-1}(y)/\eps)\bigg],
\end{multline*}
where we have used the decomposition $\Sc^s=\nabla+\omega^s$.

The $C^k$ norms of $\rho$, $\omega^s$ are bounded, and by the decay of the heat kernel off the diagonal\footnote{This is Prop.~3.24 in \cite{K1}: For any small enough $d>0$, there exists a constant $c>0$ (independent of the superconnection parameter) and $C(s)\in\R[|s|^{-1/2},|s|^{1/2}]$ such that
\[
\|p_{t,s}(x,y)\|_{C^k}\le C(s)e^{-c/t}
\]
for $\dist(x,y)>d$ and $t>0$. The constant $c$ may be chosen less than $d^2/4$; $C(s)$ then depends only on the superconnection, $s$ and $k$.
}
\[
 \left\|p^\eps_{t/\eps^2,s\eps^2}(0,\exp^{-1}(y)/\eps)\right\|_{C^k}\le C(s\eps^2)e^{-{r'}^2/4t},
\]
where $C(u)\in\R[|u|^{1/2},|u|^{-1/2}]$.

Combining these estimates shows
\[
\left\|H\phi_t\right\|_{C^k}\le C(s\eps^2) |\eps|^{-n}e^{-{r'}^2/4t},
\]
for some $C(u)\in\R[|u|^{1/2},|u|^{-1/2}]$.

We now wish to show $\lim_{t\to 0}\phi_t=0$. Both $p_{t,s}(x,y)$ and $\eps^{-n}p^\eps_{t/\eps^2,s\eps^2}(0,\exp^{-1}(y)/\eps)$ converge to $\delta_x\id_{E_x}$ in $H^l$, $l<-\dim X/2$ as $t\to0$, so that 
\[
 \lim_{t\to 0}\phi_t=0
\]
in $H^l$, $l<-\dim X/2$. We define
\[
 \psi_t=-\int_0^t\ud u\,e^{-(u-t)\Dirac(\Sc^s)^2}(H\phi)_u,
\]
and note that it is smooth, and converges to zero in $C^\infty$ as $t\to 0$. But $H(\phi_t+\psi_t)=0$, and thus, by uniqueness of solutions to the heat equation, and $\lim_{t\to 0}(\phi_t+\psi_t)=0$ in $H^l$, $l<-\dim X/2$, we conclude
\[
 \psi_t=-\phi_t.
\]
But then $\phi_t$ converges to zero as $t\to0$ in $C^\infty$, as asserted.

Finally, the basic parabolic estimate for the heat semigroup\footnote{This is Prop.~3.23 in \cite{K1}:
 Let $H_s=\frac{\partial}{\partial t}+\Dirac(\Sc^s)^2$ be the heat operator associated to $\Sc^s$. There exist $C_l\in\R[|s|^{-1/2},|s|^{1/2}]$ such that for any $\phi(x,t)\in C^\infty(X\times[0,\infty); \Sp\stimes E)$ and all $t\ge0$
\[
 \|\phi(\cdot,t)\|_{H^l}\le C_l(s)\left(\int_0^t\ud u\,\|(H_s\phi)(\cdot,u)\|_{H^l}+\|\phi(\cdot,0)\|_{H^l}\right).
\]}  
 shows
\[
 \|\phi_t\|_{H^l}\le C(s\eps^2) |\eps|^{-n}e^{-{r'}^2/4t},
\]
for some $C(u)\in\R[|u|^{-1/2},u^{1/2}]$, where the degree of $C(u)$ depends on $l$.
Choosing $l$ large enough and using the Sobolev embedding theorem we obtain the result.
\end{proof}

We may now establish Prop.~\ref{prop:asymptote}. We begin with the small $\eps$ asymptotic expansion
\[
 p^\eps_{1,s}(0,0)=\sum_{k\ge0}^Nb_k\eps^k+O\left(|\eps|^{N+1}\right).
\]
 The $b_k$ are smooth in $s$ by virtue of the fact that $p^\eps_{t,s}(\xi,\xi')$ varies smoothly in $s$. The $b^i$ are local in the geometry of $X$: indeed, all $\eps$ derivatives of $H^\eps$ at $\eps=0$ have coefficients depending only on the jet of the geometry at $x$. Prop.~\ref{lem:kerneleps} applied twice shows that
\[
 \left|p^\eps_{1,s}(0,0)- p^{-\eps}_{1,s}(0,0)\right|<C(s\eps^2)e^{-c/\eps^2},
\]
allowing us to conclude (recalling that $C(s\eps^2)$ may be taken to be a rational function in $|s\eps^2|^{1/2}$) that up to an error that vanishes to all orders of $\eps$, $p^\eps_{1,u}(0,0)$ is even in $\eps$, and the coefficients $b_{2l+1}$ all vanish. Prop.~\ref{lem:kerneleps} also shows that
\[
\left|\exp^* p_{t,s/t}(0,0)-\eps^{-n}p^\eps_{t/\eps^2,s\eps^2/t}(0,0)\right|\le C(s\eps^2/t)e^{-c/t},
\]
so that, setting $\eps=\sqrt{t}$, one has
\[
 \exp^* p_{t,s/t}(0,0)=t^{-n/2}p^{\sqrt{t}}_{1,s}(0,0)+O\left(C(s)e^{-c/t}\right),
\]
or, setting 
\[
 A_k=b_{2k},
\]
one arrives at
\[
 \exp^* p_{t,s/t}(0,0)=t^{-n/2}\sum_{k\ge0}^NA_kt^k+O\left(t^{N-n/2+1/2-\delta }\right),
\]
where $\delta$ is arbitrarily small but non-zero.
This establishes the existence of the desired asymptotic expansion.
\subsection{Proving the local index theorem}
We are now in a position to sketch a proof of Th.~\ref{th:localindex}. We will follow the general strategy found in the Freed's online lecture notes on Index Theory, and use Getzler's scaling \cite{G} to find the constant term in the small $t$ asymptotic expansion $\sTr p_{t,1/t}(x,x)$, and show that all divergent terms vanish.

To begin we introduce an algebraic deformation of the Clifford algebra.  Let $\Cliff^\eps(X)$ be the bundle of Clifford algebras with generic fibre at $x\in X$ given by $\Cliff(T_xX,\eps^2 g)$. A $\Cliff(X)$ bundle is a $\Cliff^\eps(X)$ bundle in the obvious way; denote by $c_\eps(\cdot)$ the action of $\Cliff^\eps(X)$. 

There is a canonical vector bundle isomorphism $\Cliff(X)\cong\Forms^\bullet(X)$. We denote the image of $a\in\Forms(X)$ in $\Cliff(X)$ under this isomorphism by $\hat{a}$. Let $U_\eps:\Forms^\bullet(X)\to\Forms^\bullet(X)$ be given by
\begin{equation}\label{eq:algscale}
U_\eps:a\mapsto\eps^{-k}a,
\end{equation}
where $a\in\Forms^k(X)$, and $\hat{U}_\eps:\Forms^\bullet(X)\to\Cliff(X)$ be the induced map via the isomorphism $\Forms^\bullet(X)\cong\Cliff(X)$. 
\begin{lemma}
Let $a\in\Forms^k(X)$. Then
\[
\lim_{\eps\to0}\eps^l c_\eps(\hat{U}_\eps(a))=\begin{cases}
0&\text{if $l>k$},\\
\extm(a)&\text{if $k=l$},\\
\infty&\text{if $l<k$}.
\end{cases}
\]
\end{lemma}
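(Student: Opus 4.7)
The plan is to reduce the claim to a direct algebraic computation by making the operator $c_\eps(v)$ on $\Forms^\bullet(X)$ explicit via the symbol isomorphism $\Cliff^\eps(X) \cong \Forms^\bullet(X)$, and then expanding a single Clifford monomial as a polynomial in $\eps^2$.

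First I would work locally near a point $x \in X$ in an orthonormal coframe $\{e^1,\dots,e^n\}$ for $T^*X$. Since every operator appearing in the statement is linear in $a$, it suffices to verify the three cases when $a = e^{i_1}\wedge\cdots\wedge e^{i_k}$ for a strictly increasing multi-index; a general $k$-form is a linear combination of such monomials.

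Next I would unpack the $\Cliff^\eps$-action. Under the symbol isomorphism, left multiplication by a one-form $v$ in $\Cliff^\eps$ acts on $\Forms^\bullet$ as
\[
c_\eps(v) \;=\; \extm(v) - \eps^2\,\intm(v^\sharp),
\]
as follows from the Clifford relation $c_\eps(v)c_\eps(w)+c_\eps(w)c_\eps(v) = -2\eps^2 g(v,w)\,\id$ for $\Cliff(T^*X,\eps^2 g)$ together with the standard identity $\extm(v)\intm(w^\sharp)+\intm(w^\sharp)\extm(v) = g(v,w)\,\id$. Orthonormality of the $e^{i_j}$ then gives $\hat{a} = e^{i_1}\cdot_\eps\cdots\cdot_\eps e^{i_k}$, so
\[
c_\eps(\hat{a}) \;=\; \prod_{j=1}^{k}\bigl(\extm(e^{i_j}) - \eps^2\,\intm(e_{i_j})\bigr).
\]
Expanding this product, the unique $\eps^0$ summand is $\extm(e^{i_1})\cdots\extm(e^{i_k}) = \extm(a)$, while every other summand carries a factor $\eps^{2m}$ for some $m \geq 1$. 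Thus
\[
c_\eps(\hat{a}) \;=\; \extm(a) + \eps^2\,R(\eps^2),
\]
where $R$ is an operator-valued polynomial in $\eps^2$ whose coefficients do not depend on $\eps$.

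Finally, by definition $\hat{U}_\eps(a) = \eps^{-k}\hat{a}$, so multiplying the expansion above through by $\eps^{l-k}$ yields
\[
\eps^l c_\eps(\hat{U}_\eps(a)) \;=\; \eps^{l-k}\,\extm(a) + \eps^{l-k+2}\,R(\eps^2),
\]
and the three cases are read off directly from the exponents: if $l>k$ every power of $\eps$ is strictly positive and the limit is $0$; if $l=k$ the constant term $\extm(a)$ survives while the remainder vanishes; and if $l<k$ the leading term $\eps^{l-k}\extm(a)$ blows up, the exterior-multiplication operator $\extm(a)$ being nonzero whenever $a \neq 0$. I anticipate no serious obstacle beyond bookkeeping: the entire argument is algebraic once the explicit form of $c_\eps$ on $\Forms^\bullet$ is in hand. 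The only delicate point is verifying that formula from the Clifford relation for $\eps^2 g$, which is an elementary check.
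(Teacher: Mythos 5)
Your argument is correct and is essentially the standard Getzler-rescaling computation the paper has in mind (the lemma is stated there without proof): identifying the $\Cliff^\eps$-action with $\extm(v)-\eps^2\intm(v^\sharp)$ on $\Forms^\bullet$, expanding the Clifford monomial, and reading off the powers of $\eps$ after applying $\hat{U}_\eps$. Your choice to realise $c_\eps$ on the exterior algebra via the symbol isomorphism is also the right reading of the statement, since the limiting operator $\extm(a)$ only acts there (not on spinors), which is exactly how the lemma is used later through the conjugation by $S_\eps=U_\eps T^*_\eps$ on $\Cliff(X)_x$-valued kernels.
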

An immediate consequence of this is the following.
\begin{lemma}\label{lem:sclim}
Writing $\Sc^s=\nabla+\omega_s$, one has
\[
\lim_{\eps\to0}\eps c_\eps(\hat{U}_\eps\omega_{s/\eps^2})=\extm(\omega_s).
\]
\end{lemma}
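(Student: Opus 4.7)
The plan is to expand $\omega_{s/\eps^2}$ using the $\R^\times$-action (Eq.~\ref{eq:raction}), distribute $c_\eps\circ\hat{U}_\eps$ term by term, and invoke the preceding lemma on each homogeneous component $\omega_i\in\Forms^i(X;\End(V))$. The key point is that the factor $\eps$ out front is precisely what is needed to balance the asymmetry between the exponent $(1-i)/2$ dictated by the superconnection scaling and the exponent $i$ required by the Getzler scaling in the preceding lemma.

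First I would substitute the definition of the $\R^\times$-action:
\[
\omega_{s/\eps^2}=\sum_i\left|\frac{s}{\eps^2}\right|^{(1-i)/2}\omega_i=\sum_i|s|^{(1-i)/2}|\eps|^{i-1}\omega_i.
\]
Since $\hat{U}_\eps$ and $c_\eps$ act only on the form-factor (the $\End(V)$-factor being inert), linearity gives
\[
\eps\,c_\eps(\hat{U}_\eps\omega_{s/\eps^2})=\sum_i|s|^{(1-i)/2}\bigl(\eps\,|\eps|^{i-1}\bigr)c_\eps(\hat{U}_\eps\omega_i).
\]
Taking $\eps\to 0^+$, we have $\eps\,|\eps|^{i-1}=\eps^i$, so each summand is of the form $|s|^{(1-i)/2}\eps^i c_\eps(\hat{U}_\eps\omega_i)$ with $\omega_i$ of pure form-degree $i$.

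Next I would apply the preceding lemma termwise with $l=k=i$, yielding
\[
\lim_{\eps\to 0}\eps^i c_\eps(\hat{U}_\eps\omega_i)=\extm(\omega_i).
\]
Summing and using linearity of exterior multiplication gives
\[
\lim_{\eps\to 0}\eps\,c_\eps(\hat{U}_\eps\omega_{s/\eps^2})=\sum_i|s|^{(1-i)/2}\extm(\omega_i)=\extm\Bigl(\sum_i|s|^{(1-i)/2}\omega_i\Bigr)=\extm(\omega_s).
\]

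There is no serious obstacle: the statement is an immediate algebraic consequence of the preceding lemma. What it does encode, however, is the essential mechanism by which the $\R^\times$-action on superconnections dovetails with Getzler's rescaling. The extra factor of $\eps$ in front is precisely dictated by having exponent $(1-i)/2$ (rather than $-i/2$) in the scaling of $\omega_i$, and this exact balance is why the divergent terms cancel and the Getzler scaling argument extends from ordinary connections to superconnections without requiring any new idea at this stage.
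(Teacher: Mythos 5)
Your proof is correct and is exactly the argument the paper has in mind: the paper offers no written proof, stating only that the lemma is ``an immediate consequence'' of the preceding scaling lemma, and your termwise computation (scaling exponent $(1-i)/2$ producing $|\eps|^{i-1}$, balanced by the prefactor $\eps$ so that the preceding lemma applies with $l=k=i$) is precisely that immediate consequence, spelled out.
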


We return to the situation in the previous section: we concentrate on a neighbourhood $U$ of a point $x\in X$, identified via the exponential map with a ball $B_r(0)$ in $T_xX$, and use parallel transport along radial geodesics to trivialise the tangent bundle, the bundle of spinors and $V$ on $U$. We form the family of heat operators
\begin{eqnarray*}
\tilde{H}_\eps&=&\frac{\partial}{\partial t}+P_\eps,\label{eq:heatfamily}\\
P_\eps&=&\eps^2 S_\eps\Dirac(\Sc^{s/\eps^2})S^{-1}_\eps,
\end{eqnarray*} 
where $S_\eps=U_\eps T^*_\eps$.
The following lemma is crucial.
\begin{lemma}\label{lem:heatlim}
Let $\set{\xi_k}$ be an orthogonal linear co-ordinate system on $T_xX$. Then 
\begin{equation}
\lim_{\eps\to0}P_\eps=-\sum_{k,l}\left(\frac{\partial}{\partial \xi^k}-\frac{1}{4}\extm\left(\Omega^X_{kl}(x)\right)\xi^l\right)^2+\extm(\curv{\Sc^s})(x).
\end{equation}
Here $\Omega^X_{kl}$ is the Riemannian curvature.
\end{lemma}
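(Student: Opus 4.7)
The plan is to combine the generalised Lichnerowicz formula for $\Dirac(\Sc^s)^2$ with Getzler's rescaling argument, using Lemma~\ref{lem:sclim} to handle the mixed-degree contributions from the superconnection. First I would record the identity
\[
\Dirac(\Sc^s)^2 = -\sum_k\bb{\nabla^{\Sp\stimes V,s}_k}^2 + \tfrac14 R + \mathcal{R}(\Sc^s),
\]
where $\nabla^{\Sp\stimes V,s}$ is the tensor product connection built from the spin connection and the degree-$1$ piece of $\Sc^s$, $R$ is the scalar curvature of $X$, and $\mathcal{R}(\Sc^s)$ collects the Clifford contributions coming from the higher-degree pieces $\omega^s_i$ ($i\neq 1$). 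Upon writing $P_\eps = \eps^2\,S_\eps\,(\,\cdot\,)\,S_\eps^{-1}$ with $S_\eps = U_\eps T^*_\eps$ and the parameter $s$ replaced by $s/\eps^2$, each piece of the Lichnerowicz formula will be analysed separately.

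Second, I would track the four contributions. (a) Since $T_\eps^*\partial_k (T_\eps^*)^{-1} = \eps^{-1}\partial_k$, the derivative-squared part acquires a factor $\eps^{-2}$ that is cancelled by the external $\eps^2$. (b) In Riemann normal coordinates with radial parallel transport, the spin connection admits the standard expansion
\[
\Gamma^\Sp_k(\xi) = -\tfrac14\sum_{l,i,j}\Omega^X_{kl;ij}(x)\,\xi^l\, c(e^i)c(e^j) + O(|\xi|^2);
\]
after conjugation by $\hat U_\eps$ and appropriate powers of $\eps$, the Clifford bilinears $c(e^i)c(e^j)$ converge to the exterior product $e^i\wedge e^j$, producing the claimed first-order term $-\tfrac14\,\extm(\Omega^X_{kl}(x))\,\xi^l$ in the limit connection. (c) The scalar curvature $\tfrac14 R(x)$ is untouched by $S_\eps$ and therefore annihilated by the external $\eps^2$. (d) For the supercurvature contribution $\mathcal{R}(\Sc^{s/\eps^2})$, Lemma~\ref{lem:sclim} is the crucial input: the $|s/\eps^2|^{(1-i)/2}=\eps^{i-1}|s|^{(1-i)/2}$ scaling of the $i$-form part of $\omega^{s/\eps^2}$, combined with the $\eps^i$ that $c_\eps\hat U_\eps$ produces on an $i$-form and the external $\eps^2$, leaves exactly the exterior action $\extm(\curv{\Sc^s})(x)$ in the limit.

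Finally, collecting the surviving terms and using that all higher-order terms in the Taylor expansions of the metric, spin connection, and superconnection contribute strictly positive powers of $\eps$ under the rescaling and hence drop out, one obtains the stated expression for $\lim_{\eps\to 0} P_\eps$.

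The main obstacle is the careful bookkeeping required to verify that the mixed-degree pieces of $\Sc^s$, once Clifford-acted and rescaled, produce no divergent contributions. This is precisely where the $\R^\times$-action enters: the powers $|s|^{(1-i)/2}$ built into the definition of $\Sc^s$ are designed so that the singular Clifford prefactors from the higher-degree pieces $\omega_i$ are exactly compensated, leaving only the exterior-algebra realisation of $\curv{\Sc^s}$ in the limit. Once this matching is verified degree by degree, the remainder of the argument reduces to the classical Getzler scaling computation applied to the connection and spin-connection parts.
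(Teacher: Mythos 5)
Your proposal is correct and follows essentially the same route as the paper: the paper's proof is precisely the classical Getzler rescaling computation for the Lichnerowicz formula of a Dirac operator coupled to a connection, with Lemma~\ref{lem:sclim} supplying the extra input that the rescaled higher-degree pieces of $\Sc^{s/\eps^2}$ converge to exterior multiplication, so that the supercurvature term $\extm(\curv{\Sc^s})(x)$ survives in the limit. Your degree-by-degree bookkeeping of the $|s|^{(1-i)/2}$ factors against the Clifford/rescaling powers is exactly the content of that lemma.
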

The lemma is proved similarly to the analogous lemma for connections, taking into account lemma \ref{lem:sclim}. Let
\[
\tilde{H}_0=\lim_{\eps\to0}\tilde{H}_\eps.
\]
By lemma \ref{lem:heatlim}, and Mehler's formula, the heat kernel for $\tilde{H}_0$ is given by
\begin{equation}\label{eq:limitkernel}
 \tilde{p}_{t,s}(0,0)=(4\pi t)^{-n/2}\sqrt{\det\left(\frac{t\Omega^X/2}{\sinh(t\Omega^X/2)}\right)}e^{-t(\Sc^s)^2},
\end{equation}
where all curvatures are evaluated at the origin.

We are now in a position to finish the proof of Th.~\ref{th:localindex}. We define 
\begin{equation}\label{eq:pgetzler}
 \tilde{p}^\eps_{t,s}(x,y)=\eps^nS_\eps p_{\eps^2 t,s/\eps^2}(x,y).
\end{equation}
A short computation shows that $\widetilde{H}_\eps\tilde{p}^\eps_{t,s}(x,y)=0$, with initial condition $\delta_x$, so that, by uniqueness of solutions, $\tilde{p}^\eps_{t,s}(x,y)$ is the heat kernel for $\widetilde{H}_\eps$. Fixing an orthonormal basis $\{e^i\}$ of $T^*_x(X)$, we write
\[
 p_{t,s}(x,y)=\sum_I p^I_{t,s}(x,y)e^I,
\]
where the $I$ are ordered multi-indices, the $e^I\in\Cliff(X)_x$, and $p^I_{s,t}(x,y)\in\End(E_x)$. Then
\[
 \tilde{p}^\eps_{t,s}(x,y)=\sum_I \eps^{n-|I|}p^I_{\eps^2t,s/\eps^2}(x,\eps y)\hat{e}^I.
\]
The small $t$ asymptotic expansion of $p_{t,s/t}(x,x)$ may be written
\[
 p_{t,s/t}(x,x)\sim(4\pi)^{-n/2}\sum_{j,I}t^je^IA_{j,I},
\]
so that
\[
 \tilde{p}^\eps_{t,s/t}(x,x)\sim(4\pi)^{-n/2}\sum_{j,I}\eps^{2j-|I|}t^{j-n/2}\hat{e}^IA_{j,I}.
\]
It is well known (see \cite{BGV} Ch.~3) that
\[
 \sTr\,\hat{e}^I=\begin{cases}
                  (-2i)^{n/2}&\text{if $I\sim12\ldots n$,}\\
		  0&\text{otherwise,}
                 \end{cases}
\]
and we compute
\[
 \sTr\,\tilde{p}^\eps_{t,s/t}(x,x)\sim(4\pi)^{-n/2}\sum_{j}\eps^{2j-n}t^{j-n/2}\sTr\,A_{j,12\ldots n}.
\]
Eq.~\ref{eq:limitkernel} shows that
\[
 \tilde{p}^0_{t,s/t}(x,x)=(4\pi)^{n/2}\sum_jP_j(\Omega^X/2,-(\Sc^{s/t})^2)t^{j-n/2}.
\]
The coefficient of $t^{j-n/2}$ is a $2j$-form. On the other hand, the coefficient of $t^{j-n/2}$ is given by
\[
 \sum_{I}\eps^{2j-|I|}\hat{e}^IA_{j,I}.
\]
Thus $A_{j,I}=0$ for $2j<|I|$. We see then that 
\[
\lim_{t\to0}\sTr p_{t,s/t}(x,x)=\sTr A_{n/2,12\ldots n}.
\] 
However
$A_{n/2,12\ldots n}$ is the constant term in the small $t$ asymptotic expansion of $\tilde{p}^0_{t,s/t}(x,x)$. Examining this asymptotic expansion, and noting that the $k$-form part of $t(\Sc^{s/t})^2$ is order $t^{k/2}$, we see that
\[
\sTr A_{n/2,12\ldots n}=\lim_{t\to0}\sTr\tilde{p}_{t,s/t}(x,x)=(2\pi i)^{n/2}\left[\Ahat\left(\Omega^X\right)\sCh\left(\Sc^s\right)\right]_\bb{n},
\]
proving the result.
\section{$\eta$-invariants and manifolds with boundaries}\label{sec:boundary}
We now turn our attention to manifolds with boundary. We fix a compact Riemannian and spin manifold $X$ with boundary $\partial X=Y$. Let $V\to X$ be a finite dimensional, complex and hermitian $\Zz$-graded vector bundle with superconnection $\Sc$. We assume that a collar neighbourhood $U$ of the boundary exists and a homeomorphism $U\cong Y\times[0,1)$ such that the geometry respects the product structure, $V$ is pulled back from $V|_{\partial X}$, and $\Sc=\ud t\,\partial_t+\Sc|_{\partial X}$, where $t$ is the parameter on $[0,1)$. We shall define an $\eta$-invariant on $Y$ that obeys an APS theorem \cite{APS1} when $Y$ is a boundary as described.

We begin by defining the Chern-Simons form for a pair of superconnections.
\begin{definition}\label{def:chern-simons}
 Let $Y$ be a smooth manifold, and $V\to Y$ be a finite dimensional, $\Zz$-graded vector bundle, with superconnections $\Sc_0$, $\Sc_1$. Fix once and for all $\rho\in C^\infty(\R)$ monotone such that  
$\rho(t)=0$ (resp. $\rho(t)=1$) for $t<1/4$ (resp. $t>3/4$). The \emph{ Chern-Simons form}  associated with the pair of superconnections is given by
\[
 \alpha[\Sc_0,\Sc_1]=\int_0^1\,\ud t\,\sTr\left[\dot{\rho}(t)\left(\Sc_1-\Sc_0\right)e^{((1-\rho(t))\Sc_0+\rho(t)\Sc_1)^2}\right].
\]
\end{definition}
Having done so, we may now define the $\eta$-invariant (mod $\Z$) for a superconnection.
\begin{definition}\label{def:tau}
 Let $Y$ be a compact and spin Riemannian manifold, and $V\to Y$ be a finite dimensional, complex and hermitian $\Zz$-graded vector bundle with unitary superconnection $\Sc$. The \emph{ $\tau$-invariant} associated to this data is given by
\[
 \tau_Y(\Sc)=\tau_Y(\nabla)\exp\left[(2\pi i)^{(1-\dim Y)/2}\int_Y\,\Ahat\left(\Omega^Y\right)\alpha[\Sc,\nabla]\right],
\]
where we have written $\Sc=\nabla+\omega$, and $\nabla$ is a connection on $V$.\footnote{We remind the reader that associated to an odd, spin, riemannian manifold $Z$ with hermitian vector bundle $E\to Z$ and connection $\nabla$ is the $\eta$-invariant $\eta_Z(\nabla)$, defined by Atiyah, Patodi and Singer \cite{APS1} to be 
\[
\eta_Z(\nabla)=\lim_{s\to 1}\sum_{\lambda\in\spec{\dslash(\nabla)}}\frac{|\lambda|}{\lambda^{1+s}},
\]
where here the limit is to be interpreted as a renormalised limit. Associated to this invariant are two secondary invariants: the reduced $\eta$-invariant
\[
\xi_Z(\nabla)=\frac{\eta_Z(\nabla)+\dim\ker\dslash(\nabla)}{2},
\]
and the exponentiated $\eta$-invariant
\[
\tau_Z(\nabla)=e^{2\pi i\xi_Z(\nabla)}.
\]
}
\end{definition}
The reader may well ask about the well-definedness of the putative $\eta$-invariant. The only choice is in the decomposition $\Sc=\nabla+\omega$. This choice can be made completely canonical, by insisting that $\omega$ have no one-form term. However, we prefer to note that an easy consequence of Th.~\ref{thm:APS} is that any choice gives rise to the same invariant.

The reader may also wish some justification for the definition. The essential justification is that we desire an $\eta$-invariant that obeys an APS theorem. Following the usual definition of the $\eta$-invariant as a spectral invariant unfortunately does not give rise to a geometric invariant that satisfies an APS theorem with the right hand side expressible in terms of recognisable characteristic classes (essentially as Th.~\ref{th:localindex} involves the entire family of operators $\Dirac(\Sc^s)^2$, which cannot be simultaneously diagonalised), and we were thus forced to look elsewhere. Our definition comes from considering the cylinder $Y\times[0,1]$, with the $\Zz$-graded vector bundle $V\times[0,1]\to Y\times[0,1]$, and the superconnection on the extended bundle such that on $V\times\{0\}$ it restricts to $\Sc$, and on $V\times\{1\}$ it restricts to $\nabla$. Applying the (putative) APS theorem to this situation gives rise to our definition.

It is almost a tautology that the APS-theorem holds.
\begin{theorem}\label{thm:APS}
 Let $X$ be a compact and spin Riemannian manifold with boundary $\partial X=Y$. Let $E\to X$ be a finite dimensional, complex, hermitian, $\Zz$-graded vector bundle, and $\Sc$ a unitary superconnection on $E$. Suppose there exists an open neighbourhood $Y\subset U\subset X$ homeomorphic to $Y\times [0,1)$ such that the geometry on $U$ is pulled back from that on $Y$ extended trivially. Then
\[
 \tau_Y\left(\Sc|_Y\right)=\exp\left[-(2\pi i)^{1-\dim X/2}\int_X\,\Ahat\left(\Omega^X\right)\sCh(\Sc)\right].
\]
\end{theorem}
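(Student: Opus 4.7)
The author's remark that the result ``is almost a tautology'' is the key hint: Definition~\ref{def:tau} of $\tau_Y(\Sc)$ was engineered so that the desired APS equation reduces to the classical APS theorem once an ordinary connection is inserted as an auxiliary. My plan is therefore to reduce the statement to the classical APS theorem plus a transgression. First, pick any unitary connection $\nabla$ on $V|_Y$ and extend it, using the product structure of the collar $U\cong Y\times[0,1)$, to a unitary connection $\tilde\nabla$ on $V\to X$ which is pulled back from $\nabla$ on all of $U$. Nothing in the construction requires $\Sc$ to equal $\nabla$ anywhere; all we need is a reference connection on $X$ that restricts to $\nabla$ in a collar.

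The classical Atiyah--Patodi--Singer theorem, applied to the Dirac operator twisted by $\tilde\nabla$ in graded form (that is, by taking the alternating sum over the components of the $\Zz$-grading on $V$), yields
\[
\tau_Y(\nabla)=\exp\bb{-(2\pi i)^{1-\dim X/2}\int_X\Ahat(\Omega^X)\,\sCh(\tilde\nabla)}.
\]
Since $\tilde\nabla$ pulls back from $\nabla$ on the collar, the integrand has product form near $\partial X$ and the spectral boundary conditions coincide with the usual ones for $\dslash(\nabla)$, so this is a legitimate invocation of classical APS.

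Next, regard both $\Sc$ and $\tilde\nabla$ as superconnections on $V\to X$ and apply the transgression identity (Eq.~\ref{eq:transgression}) integrated from $t=0$ to $t=1$ along the path defining $\alpha$, obtaining
\[
\sCh(\Sc)-\sCh(\tilde\nabla)=-\ud\,\alpha[\tilde\nabla,\Sc]
\]
as forms on $X$. Multiplying by the closed form $\Ahat(\Omega^X)$ and applying Stokes' theorem, the product structure in the collar ensures both $\Ahat(\Omega^X)|_Y=\Ahat(\Omega^Y)$ and $\alpha[\tilde\nabla,\Sc]|_Y=\alpha[\nabla,\Sc|_Y]$, so
\[
\int_X\Ahat(\Omega^X)\sCh(\Sc)-\int_X\Ahat(\Omega^X)\sCh(\tilde\nabla)=-\int_Y\Ahat(\Omega^Y)\,\alpha[\nabla,\Sc|_Y].
\]
Multiplying by $-(2\pi i)^{1-\dim X/2}$, exponentiating, substituting step~2, and using the identity $(1-\dim Y)/2=1-\dim X/2$ to match prefactors, recovers exactly the right-hand side of Definition~\ref{def:tau} for $\tau_Y(\Sc|_Y)$.

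The main obstacle I anticipate is not conceptual but notational: carefully tracking the signs in the definition of $\alpha$ (in particular the behaviour of $\alpha[\Sc_0,\Sc_1]$ versus $\alpha[\Sc_1,\Sc_0]$ under boundary orientation), and verifying that the classical APS theorem for graded coefficient bundles takes precisely the form used above. A subsidiary point that falls out of the argument is well-definedness of $\tau_Y(\Sc)$: the construction above depends on the auxiliary choice of $\nabla$ only through the classical APS statement, so running it with two different decompositions $\Sc=\nabla+\omega=\nabla'+\omega'$ and subtracting reproduces the classical variation formula for the exponentiated $\eta$-invariant under change of connection, which shows independence of the choice.
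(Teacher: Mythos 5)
Your argument is correct in substance, but it runs along a different mechanism than the paper's proof. The paper writes $\Sc=\nabla+\omega$ over all of $X$ (so no extension of the boundary connection is needed), glues the closed manifold $\mathcal{X}=X\cup_Y(Y\times I)\cup_Y\bar{X}$ with the interpolating superconnection $(1-\rho(t))\Sc|_Y+\rho(t)\nabla|_Y$ on the cylinder and $\nabla$ on the orientation-reversed copy, applies the closed-manifold index theorem to get integrality of $(2\pi i)^{-\dim X/2}\int_{\mathcal{X}}\Ahat(\Omega^{\mathcal X})\sCh(\Sc^{\mathcal E})$, splits the integral into three pieces (the cylinder contributing exactly the Chern--Simons term of Def.~\ref{def:chern-simons}), and only then invokes classical APS for the connection. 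You instead stay on $X$: take a reference connection $\tilde\nabla$ in product form near the boundary (you could simply use the connection term of $\Sc$, as the paper does, and skip the extension step), apply classical graded APS to $\tilde\nabla$ directly, and trade $\int_X\Ahat(\Omega^X)\sCh(\Sc)$ for $\int_X\Ahat(\Omega^X)\sCh(\tilde\nabla)$ plus a boundary Chern--Simons term via the transgression formula \eqref{eq:transgression} and Stokes. Your route is arguably more economical: it uses only classical APS plus transgression, and never needs the gluing or any index theorem for Dirac operators coupled to superconnections on closed manifolds; what the paper's route buys is that the Chern--Simons correction appears geometrically as a cylinder contribution, which is precisely how Def.~\ref{def:tau} was motivated. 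One caveat on your final matching step: Stokes naturally produces $\alpha[\nabla,\Sc|_Y]$, whereas Def.~\ref{def:tau} is written with $\alpha[\Sc|_Y,\nabla]$, and these agree up to sign only modulo an exact form (harmless after pairing with the closed form $\Ahat(\Omega^Y)$ over the closed manifold $Y$, since the cut-off $\rho$ need not satisfy $\rho(1-t)=1-\rho(t)$); so the assertion that you recover the right-hand side ``exactly'' does hinge on the sign and orientation bookkeeping you flag (induced boundary orientation in Stokes and the sign convention in classical APS). The paper's own conventions are equally loose at this point (e.g.\ the sign of the exponent in Def.~\ref{def:chern-simons}), so this is a matter of fixing conventions rather than a gap, and your closing observation that the argument also yields well-definedness of $\tau_Y(\Sc)$ independent of the decomposition matches the paper's remark following Def.~\ref{def:tau}.
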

\begin{proof}
Write $\Sc=\nabla+\omega$. Let $\bar{E}\to\bar{X}$, with $\bar{Y}=\partial\bar{X}$ denote the data with the opposite choice of orientation. Let $\widetilde{Y}=Y\times I$, $\widetilde{E}=E|_Y\times I$, and let $\widetilde{\Sc}=(1-\rho(t))\Sc|_Y+\rho(t)\nabla|_Y$, where $\rho$ is the smooth function in Def.~\ref{def:chern-simons}.
Form 
\[
\mathcal{X}=X\cup_Y\widetilde{X}\cup_Y \bar{X},
\]
with $\mathcal{E}\to\mathcal{X}$ defined by
\[
 \mathcal{E}=E\cup_{E|_Y}\widetilde{E}\cup_{E|_Y}\bar{E},
\]
with superconnection $\Sc^{\mathcal{E}}$ glued from $\Sc$ on $E$, $\widetilde{\Sc}$ on $\widetilde{E}$, and $\nabla$ on $\bar{E}$ (see Fig. \ref{fig:eta}). The index theorem applied to this situation shows
\[
 (2\pi i)^{-n/2}\int_{\mathcal{X}}\,\Ahat\left(\Omega^{\mathcal{X}}\right)\sCh\left(\Sc^{\mathcal{E}}\right)=0\mod\Z.
\]
But
\begin{multline*}
 \int_{\mathcal{X}}\,\Ahat\left(\Omega^{\mathcal{X}}\right)\sCh\,\Sc^{\mathcal{E}}=\\\int_{X}\,\Ahat\left(\Omega^{X}\right)\sCh\,\Sc+\int_Y\,\Ahat\left(\Omega^Y\right)\alpha[\Sc|_Y,\nabla]+\int_{\bar{X}}\,\Ahat\left(\Omega^{\bar{X}}\right)\sCh\,\nabla.
\end{multline*}
Applying the APS theorem for $\xi$-invariants of connections, we have
\[
 0=(2\pi i)^{-n/2}\int_{X}\,\Ahat\left(\Omega^{X}\right)\sCh\,\Sc+(2\pi i)^{-n/2}\int_Y\,\Ahat\left(\Omega^Y\right)\alpha[\Sc|_Y,\nabla]-\xi_{\bar{X}}(\nabla) 
\]
modulo integers.
Exponentiating both sides, and noting $\xi_{\bar{X}}(\nabla)=-\xi_{X}(\nabla)$, one obtains the theorem.
\begin{figure}[hb]
\centering

             \includegraphics[width=0.8\linewidth]{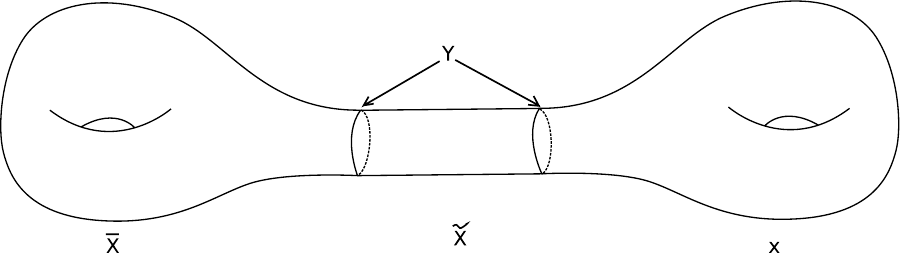}
           
 \caption{The manifold $\mathcal{X}$.}
 \label{fig:eta}
\end{figure}
\end{proof}
\section{An index theorem for families}\label{sec:families}
\subsection{Riemannian maps}
The good notion of ``family'' for Riemannian geometry is that of a Riemannian map.
\begin{definition}[Riemannian maps]\label{def:riemannmap}
 Let $X$, $Y$ be smooth manifolds. A \emph{ Riemannian map} is a triple $(\pi,\,g^{X/Y}\!,\,P)$, where $\pi:X\to Y$ is a smooth submersion, $g^{X/Y}$ be a positive, non-degenerate, symmetric bilinear form on $T(X/Y)$, and $P:T(X)\to T(X/Y)$ a smooth projection.
\end{definition}
\begin{remark}
Several remarks are in order. Generally, one simply denotes the Riemannian map by $\pi:X\to Y$, suppressing the metric and projection. Specifying the projection $P$ is equivalent to choosing a ``horizontal'' distribution in $T(X)$, and determines a splitting 
\[
 T(X)\cong T(X/Y)\oplus\pi^*T(Y),
\]
and in particular, allows one to take a horizontal lift of vector fields $\xi\in T(Y)$, which we denote by $\tilde{\xi}$. A choice of metric $g^Y$ on $T(Y)$ makes $X$ a Riemannian manifold, with metric 
\begin{equation}\label{eq:riemannstructure}
g^X=g^{X/Y}\oplus\pi^*g^Y. 
\end{equation}
 Conversely, a Riemannian map $\pi:X\to Y$ along with a Riemannian structure on $X$ such that the metric splits as in Eq.~\ref{eq:riemannstructure} is called a Riemannian \emph{submersion}.
A Riemannian map $X\to\{\text{pt}\}$ is a Riemannian structure on $X$. 
\end{remark}
\begin{proposition}[Levi-Civita connection]
 Let $\pi:X\to Y$ be a Riemannian map. There exists a unique torsion free connection $\nabla^{\pi}$ on $T(X/Y)$ preserving $g^{X/Y}$. It is called the Levi-Civita connection of the map.
\end{proposition}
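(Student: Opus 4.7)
My plan is to reduce the problem to the classical Levi-Civita theorem by picking an auxiliary metric on the base, and then to show that the resulting vertical connection does not depend on this choice. Concretely, fix any Riemannian metric $g^Y$ on $Y$; then by Eq.~\ref{eq:riemannstructure} the triple $(X, g^X, P)$ becomes a Riemannian submersion and $X$ itself acquires the standard Levi-Civita connection $\nabla^{LC,X}$. Viewing a section $V \in \Gamma(T(X/Y))$ as a vertical vector field on $X$, I define the candidate
\[
\nabla^\pi_\xi V := P\bigl(\nabla^{LC,X}_\xi V\bigr), \qquad \xi \in \Gamma(TX).
\]
The Leibniz rule and $C^\infty(X)$-linearity in $\xi$ are immediate from those of $\nabla^{LC,X}$, so this is a connection on $T(X/Y)$. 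Compatibility with $g^{X/Y}$ follows from $P$ being the orthogonal projection with respect to $g^X$ and from $\nabla^{LC,X}$ preserving $g^X$: for vertical $V, W$ one has $\xi\,g^{X/Y}(V,W) = \xi\,g^X(V,W) = g^X(\nabla^{LC,X}_\xi V, W) + g^X(V, \nabla^{LC,X}_\xi W) = g^{X/Y}(\nabla^\pi_\xi V, W) + g^{X/Y}(V, \nabla^\pi_\xi W)$.

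For the torsion condition, interpreted (as in Bismut's setup) as $\nabla^\pi_V W - \nabla^\pi_W V = [V,W]$ for vertical $V,W$: since the Lie bracket of two vertical fields is again vertical, $P[V,W] = [V,W]$, and vanishing of the torsion of $\nabla^{LC,X}$ gives $\nabla^\pi_V W - \nabla^\pi_W V = P(\nabla^{LC,X}_V W - \nabla^{LC,X}_W V) = P[V,W] = [V,W]$, as required. Uniqueness among metric, torsion-free connections on $T(X/Y)$ follows from a Koszul-type computation restricted to vertical fields: the identity
\[
2g^{X/Y}(\nabla^\pi_V W, U) = V g^{X/Y}(W,U) + W g^{X/Y}(V,U) - U g^{X/Y}(V,W) + g^{X/Y}([V,W],U) - g^{X/Y}([V,U],W) - g^{X/Y}([W,U],V)
\]
for vertical $U,V,W$ determines $\nabla^\pi_V W$ completely, and an analogous identity involving a horizontal lift in the first slot determines $\nabla^\pi_{\tilde\xi} V$.

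The main obstacle is showing that $\nabla^\pi$ is independent of the auxiliary choice of $g^Y$. For $\xi$ vertical this is manifest from the Koszul identity above, which only involves $g^{X/Y}$ and Lie brackets of vertical fields. The delicate case is $\nabla^\pi_{\tilde\xi} V$ with $\tilde\xi$ a horizontal lift: a priori the horizontal lift, the bracket $[\tilde\xi, V]$, and $\nabla^{LC,X}_{\tilde\xi} V$ all depend on $g^Y$. Here I would use the Koszul formula for $2 g^X(\nabla^{LC,X}_{\tilde\xi} V, U)$ with $U$ vertical and project onto $T(X/Y)$; the terms involving derivatives of $g^X$ in horizontal directions decompose into pieces that either only see $g^{X/Y}$ or appear symmetrically and cancel after projection. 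Together with the observation that $[\tilde\xi, V]$ is $g^Y$-independent (it depends only on the horizontal distribution $\ker P$, which is part of the Riemannian map data), this yields $g^{X/Y}(\nabla^\pi_{\tilde\xi} V, U) = \tfrac12\bigl(\tilde\xi\, g^{X/Y}(V,U) - g^{X/Y}([V,U], \tilde\xi)_{\text{vert part}} + \ldots\bigr)$ where every term is intrinsic to $(\pi, g^{X/Y}, P)$. This computation, which is essentially a vertical-bundle analogue of Koszul's formula adapted to Riemannian submersions, is the real content of the proposition.
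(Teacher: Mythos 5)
Your proposal is correct and follows essentially the same route as the paper: choose an auxiliary metric $g^Y$, take the Levi-Civita connection of $g^X=g^{X/Y}\oplus\pi^*g^Y$, and define $\nabla^\pi$ by projecting with $P$, then check independence of the choice. You simply spell out (via the Koszul formula and the vertical interpretation of torsion-freeness) the verification that the paper defers to a footnote reference to \cite{BGV}, Ch.~10.
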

\begin{proof}[Construction]
 Choose a Riemannian structure on $Y$ and give $X$ a Riemannian structure by setting $g^X=g^{X/Y}\oplus\pi^*g^Y$. The Levi-Civita theorem asserts that there exists a unique unitary torsionfree connection $\nabla^X$ on $X$ preserving $g^X$. Form 
\[
 \nabla^{\pi}\equiv P\nabla^XP.
\]
One may check that this is independent of choices\footnote{Details of all constructions in this section may be found in \cite{BGV} Ch.~10.}.
\end{proof}
There are two important curvatures associated with a Riemannian map $\pi:X\to Y$ in addition to the curvature $\Omega^\pi$ of $\nabla^{\pi}$.
\begin{definition}
 We define \emph{ two curvature tensors} associated to a Riemannian map $\pi:X\to Y$. The first is the \emph{ second fundamental form}, $\secfun^\pi:T(X/Y)\stimes T(X/Y)\stimes\ker P\to\R$, and the second is \emph{ $T^\pi:T(Y)\stimes T(Y) \to T(X/Y)$} depending only on the Riemannian map. The tensor $\secfun^\pi$ is symmetric in the first two factors; $T^\pi$ is totally skew, and does not depend on $g^{X/Y}$. 
 
To define $\secfun^\pi$, we choose a Riemannian structure $g^Y$ on $Y$, and induce a Riemannian structure on $X$. Define
\[
 \secfun^\pi:\alpha\stimes\beta\stimes\xi\mapsto-g^{X/Y}(P\nabla^X_\alpha\xi,\beta),
\]
where $\alpha$, $\beta\in T(X/Y)$, $\xi\in\ker P$. One may show that this is independent of the choice of Riemannian structure on $Y$.

We define $T^\pi$ as follows. Let $\xi$, $\chi\in T(Y)$. Then
\[
 T^\pi:\xi,\,\chi\mapsto P([\tilde{\xi}, \tilde{\chi}]).
\]
\end{definition}
We may define the mean curvature of the family from the second fundamental form.
\begin{definition}
 The \emph{ mean curvature} $H^\pi\in\Omega^1(X)$ is a horizontal one-form defined as follows. Let $\xi\in T(X)$, and decompose $\xi$ as a sum of horizontal and vertical vectors:
\[
\xi=P\xi+\zeta.
\]
Then
\[
 H^\pi(\xi)=\sTr\secfun^\pi(\cdot,\cdot,\zeta).
\]
In terms of an orthonormal frame $\{e_i\}$ of $T(X/Y)$, one has
\[
 H^\pi(\xi)=\sTr\secfun^\pi(\cdot,\cdot,\zeta)=\sum_i\secfun^\pi(e_i,e_i,\zeta).
\]
\end{definition}

There is a natural $\R^{\times}$-action associated with a Riemannian map $\pi:X\to Y$ which scales the vertical metric. It is given by
\begin{equation}\label{eq:riemannaction}
 \pi^t=(\pi,\,|t|^{-1}g^{X/Y}\!,\,P).
\end{equation}
In order to understand the effect of the action, let us introduce a second Riemannian map $\rho:Y\to\{\mathrm{pt}\}$ (in other words, a Riemannian structure on $Y$). This gives $X$ a Riemannian structure, with metric $g^X=g^{X/Y}\oplus\pi^*g^Y$. The Levi-Civita connection associated to this metric is $\nabla^{\rho\circ\pi}$. It is independent of a global scaling of the metric, and thus
\begin{equation}\label{eq:rescaledconnection}
 \nabla^{\rho^t\circ\pi}=\nabla^{\rho\circ\pi^{1/t}}.
\end{equation}
The limit $t\to0$ of $\rho^t\circ\pi$ is called the \emph{adiabatic limit}. We will sometimes denote the adiabatic limit of a geometric quantity $\Phi$ by $\alim\Phi$. The following proposition summarises the geometry of the adiabatic limit.
\begin{proposition}
 The limit $\alim\nabla^X=\lim_{t\to0}\nabla^{\rho^t\circ\pi}$ exists, and is torsion free. It is incompatible with any Riemannian structure on $X$. Relative to the decomposition $T(X)\cong T(X/Y)\oplus T(B)$, the connection may be written
\[
 \alim\nabla^X=\begin{pmatrix}
  \nabla^\pi&*\\
&\nabla^\rho
 \end{pmatrix},
\]
and thus
\[
 \alim\Curv{X}=\begin{pmatrix}
                  \Curv{\pi}&*\\
			    &\Curv{\rho}
                 \end{pmatrix}.
\]
In particular, in the adiabatic limit the form $\Ahat(\Curv{X})$ exists, and decomposes as
\[
 \alim\Ahat(\Curv{X})=\pi^*\Ahat(\Curv{Y})\wedge\Ahat(\Curv{\pi}).
\]
\end{proposition}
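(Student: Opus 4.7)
The plan is to compute the Levi-Civita connection $\nabla^{X,\varepsilon}$ of the rescaled Riemannian structure $g^{X,\varepsilon}=\varepsilon\,g^{X/Y}\oplus\pi^{\ast}g^Y$ directly via the Koszul formula, and extract its $\varepsilon\to 0$ asymptotics. By Eq.~\eqref{eq:rescaledconnection}, this is equivalent to studying $\nabla^{\rho^t\circ\pi}$ as $t\to 0$, with $g^Y$ an auxiliary metric on the base and $\varepsilon=|t|$. I would work in a local frame consisting of vertical fields $\alpha,\beta$ and horizontal lifts $\tilde\xi,\tilde\chi$ of fields on $Y$, and expand the Koszul identity $2g^{X,\varepsilon}(\nabla^{X,\varepsilon}_U V,W)=\dots$ in each of the four possible combinations of vertical and horizontal entries.

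Two structural facts collapse most of the Koszul sums: vertical fields close under bracket, and $\pi_{\ast}[\tilde\xi,\alpha]=[\xi,0]=0$, so $[\tilde\xi,\alpha]$ is vertical as well. The surviving terms have their $\varepsilon$-dependence easily read off from the form of $g^{X,\varepsilon}$. The resulting picture is: $\nabla^{X,\varepsilon}_\alpha\beta$ has vertical component $\nabla^\pi_\alpha\beta$ (the Levi-Civita connection of the fibres) and horizontal component of order $\varepsilon$; $\nabla^{X,\varepsilon}_{\tilde\xi}\tilde\chi$ has $\varepsilon$-independent horizontal component $\widetilde{\nabla^\rho_\xi\chi}$ and $\varepsilon$-independent vertical component $\tfrac12 T^\pi(\xi,\chi)$; and $\nabla^{X,\varepsilon}_\alpha\tilde\xi$ has an $\varepsilon$-independent vertical component and a horizontal component of order $\varepsilon$. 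The crucial asymmetry is that $\mathrm{V}\!\to\!\mathrm{H}$ components vanish in the limit while $\mathrm{H}\!\to\!\mathrm{V}$ components survive; this produces the block upper-triangular form of $\alim\nabla^X$, with diagonal blocks $\nabla^\pi$ and $\pi^{\ast}\nabla^\rho$ and upper-right entry built from $T^\pi$ and the second fundamental form.

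Torsion-freeness of the limit is inherited from that of each $\nabla^{X,\varepsilon}$. Incompatibility with any Riemannian structure on $X$ is then automatic: in any orthonormal frame a metric connection must take values in $\mathfrak{so}$, and a strictly block upper-triangular matrix with nonzero upper-right and vanishing lower-left is never skew. The curvature formula follows because the curvature $d\omega+\omega\wedge\omega$ of a block upper-triangular connection $\omega$ is itself block upper-triangular, with diagonal blocks the curvatures of the diagonal entries of $\omega$. Finally, the factorization of $\Ahat$ follows because $\Ahat(R)=\det^{1/2}(R/\sinh R)$ and the determinant of a block upper-triangular matrix depends only on its diagonal blocks; equivalently, $\Ahat$ is a multiplicative characteristic class. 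The main obstacle is the careful Koszul bookkeeping: one must verify that the horizontal component of $\nabla^{X,\varepsilon}_\alpha\beta$ vanishes genuinely to order $\varepsilon$ rather than being merely bounded or divergent, since it is precisely this asymmetry in scaling that makes the limit exist yet prevents metric compatibility.
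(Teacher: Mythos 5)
The paper itself does not prove this proposition --- it is quoted as a summary of the geometry of the adiabatic limit, with the details deferred to \cite{BGV}, Ch.~10 --- and your Koszul-formula computation is precisely the standard argument given there. Your scaling bookkeeping is right: with $g^{X,\eps}=\eps\,g^{X/Y}\oplus\pi^*g^Y$ (equivalent to $\rho^{t}\circ\pi$, $\eps=|t|$, by overall scale invariance of the Levi-Civita connection), the Koszul identity gives vertical component $\nabla^\pi_\alpha\beta$ and an $O(\eps)$ horizontal component for $\nabla^{X,\eps}_\alpha\beta$, the $\eps$-independent pair $\widetilde{\nabla^\rho_\xi\chi}+\tfrac12T^\pi(\xi,\chi)$ for $\nabla^{X,\eps}_{\tilde\xi}\tilde\chi$, and an $\eps$-independent vertical (second-fundamental-form) piece with $O(\eps)$ horizontal piece for $\nabla^{X,\eps}_\alpha\tilde\xi$; this yields exactly the upper-triangular block form, torsion-freeness by continuity, and the curvature and $\Ahat$-factorisation statements by block-triangularity of power series in the curvature (together with $\Curv{\rho}=\pi^*\Curv{Y}$).

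The one step that does not hold up as written is the incompatibility claim. Your argument puts the block-upper-triangular matrix of $\alim\nabla^X$ (written in a frame adapted to $T(X/Y)\oplus\pi^*T(Y)$) into an orthonormal frame for a hypothetical metric $g'$ and asks it to be skew; but these are different frames, and a connection form that is upper-triangular in one frame can perfectly well be skew in another. Indeed block-triangularity alone cannot force incompatibility: for a metric product the limit connection is the product Levi-Civita connection (the $*$ entry vanishes), which preserves a metric, so the proposition's blanket statement must anyway be read in the generic sense of \cite{BGV} (``not compatible with any metric in general''). The honest route is the one your own computation already supplies: since $\alim\nabla^X$ preserves the vertical subbundle, compatibility with a metric would in particular force the induced connection on $T(X/Y)$ in horizontal directions to be reconcilable with a parallel fibre metric, whereas one reads off from the Koszul expansion that $\tilde\xi\, g^{X/Y}(\alpha,\beta)-g^{X/Y}(P\alim\nabla^X_{\tilde\xi}\alpha,\beta)-g^{X/Y}(\alpha,P\alim\nabla^X_{\tilde\xi}\beta)$ is (twice) the second fundamental form $\secfun^\pi$; so $\alim\nabla^X$ fails to preserve $g^{X/Y}$ (hence $g^X$ or any vertical rescaling of it) whenever $\secfun^\pi\neq0$. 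Either make that statement, or qualify the incompatibility claim as the generic one; everything else in your proposal is sound, and none of the later conclusions (block curvature, multiplicativity of $\Ahat$) depends on it.
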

\begin{remark}\label{rem:clifflim}
The Clifford bundle $\Cliff(X)$ also has an ``adiabatic limit",\footnote{This should be taken as the \emph{definition} of the ``adiabatic limit" of $\Cliff(X)$. One may in fact see $\alim\Cliff(X)$ as a limit of Clifford algebras in the same manner that, for a vector space $V$ with inner product $g$, the exterior algebra $\bigwedge V$ may be seen as the $\eps\to0$ limit of Clifford algebras $\Cliff(V,\eps g)$, but for our purposes we do not need to make this formal. }   the Clifford algebra bundle associated to $T^*X$ with the degenerate metric $\alim g^X$: by definition $\alim\Cliff(X)=\Cliff(T^*X,\,\alim g^X)$. There is an isomorphism of algebra bundles:
\[
 \alim\Cliff(X)=\pi^*\bigwedge T^*(Y)\stimes\Cliff(X/Y);
\]
morally, the horizontal multiplication degenerates to exterior multiplication. If the fibres of $\pi$ are spin, then the bundle $\Sp_0=\pi^*\bigwedge T^*(Y)\stimes \Sp(X/Y)$ is a left module for $\alim\Cliff(X)$, with the action being Clifford multiplication in vertical directions, and exterior multiplication in horizontal directions. We denote this action by $c_0(\cdot)$. The module $\Sp_0$ plays a similar role to spinors.

We recall that the Clifford algebra of a vector space with metric is naturally isomorphic as a vector space to the exterior algebra. Thus there is an isomorphism of vector bundles
\begin{equation}
\sigma:\Cliff(X/Y)\to\bigwedge T^* (X/Y),
\end{equation}
which we call the symbol map. We extend this to an isomorphism of vector bundles
\begin{equation}\label{eq:cliffsymbol}
\alim\Cliff(X)\cong\pi^*\bigwedge T^*(Y)\stimes\Cliff(X/Y)\to\bigwedge T^*(X).
\end{equation}
There is a natural bigrading on $\bigwedge T^*(X)$ given by
\begin{eqnarray}\label{eq:bigrade}
\wedge T^*(X)&=&\sum_{p,q}\wedge^{p,q}T^*X,\\
\wedge^{p,q}T^*X&=&\wedge^pT^*(Y)\stimes\wedge^q T^*(X).
\end{eqnarray}
We denote the $(p,q)$ component $\alpha\in\Omega(X)$ by $[\alpha]_\bb{p,q}$.
\end{remark}
\subsection{Pushforwards}
Let $\pi:X\to Y$ be a Riemannian map, with compact and spin fibres, and $E\to X$ a complex, hermitian, finite dimensional, $\Zz$-graded vector bundle with unitary superconnection $\Sc$. We will construct an infinite dimensional complex and hermitian $\Zz$-graded vector bundle $\pi_!E\to Y$ with superconnection $\pi_!\Sc$ that represents the pushforward of this data to $Y$. This will be done in a functorial manner, and it is to be done in such a way that, on choosing a metric on $Y$, $\Dirac_Y(\pi_!\Sc)=\Dirac_X(\Sc)$. This construction will extend the construction of Bismut \cite{Bis} to superconnections.

\begin{definition}\label{def:pushforwardbundle} Let $\pi:X\to Y$ be a Riemannian family with compact and spin fibres, and $E\to X$ be a complex, hermitian, finite dimensional $\Zz$-graded vector bundle with superconnection. Define $\pi_!E$ to be the infinite dimensional Fr\'echet bundle with typical fibre 
\[
\pi_!E|_y=\Gamma\left(i_y^*\left(\Sp(X/Y)\stimes E\right)\right),
\]
where $i_y:\{\mathrm{pt}\}\to Y$, $i:\{\mathrm{pt}\}\mapsto y\in Y$, and $\Sp(X/Y)$ is the bundle of $\Zz$-graded vertical spinors. We endow $\pi_!E$ with a hermitian structure by taking the metric to be the $\Ll$ pairing on sections. Finally, define the superconnection $\pi_!\Sc$ on $\pi_!E$ to be the unique unitary superconnection on $\pi_!E$ such that, for any choice of metric on $Y$, $\Dirac_Y(\pi_!\Sc)=\Dirac_X(\Sc)$. 
\end{definition}
\begin{remark} We construct the superconnection $\pi_!\Sc$ below. It is easy to see that any superconnection satisfying $\Dirac_Y(\pi_!\Sc)=\Dirac_X(\Sc)$ is unique.
\end{remark}
\begin{remark}
The bundle $\pi_!E$ has a different character depending on whether we take smooth or $\Ll$ sections: in the former case it is a smooth Fr\'echet bundle, in the latter case it is a continuous Hilbert space bundle. We take the former as we wish to define superconnections.
\end{remark}

We begin by the construction of $\pi_!\Sc$ in the simplest case. Let $\mathbf{1}\to X$  be the purely even trivial line bundle with (super)connection the de Rham differential $\ud$. 

The bundle $\pi_*\mathbf{1}$ possesses a natural unitary connection $\nabla^{\pi_!\mathbf{1}}$. In order to describe it, we begin by noting that the horizontal differential of the relative volume form is just given by
\[
 \nabla^\pi_\xi\mathrm{vol}^{X/Y}=-H^\pi(\xi)\mathrm{vol}^{X/Y},
\]
where $\xi$ is a horizontal vector field. The appropriate connection on $\pi_!\mathbf{1}$ is thus given by
\[
 \nabla^{\pi_!\mathbf{1}}_\xi=\nabla^\pi_{\pi^*\xi}-\frac{1}{2}H^\pi(\pi^*\xi),
\]
for $\xi\in T(Y)$.

We must introduce one more notion before we are in a position to contruct pushed-forward superconnections. Let $E\to X$ be a finite dimensional vector bundle with superconnection $\Sc$. Then the fibrewise Dirac operator $\Dirac^\pi(\Sc)\in\Forms^0(Y;\End(\pi_!E)^\odd)$ is defined to be the Dirac operator of $\Sc$ restricted to the fibre: over a typical point $y\in Y$, $\Dirac^\pi(\Sc)_y=\Dirac(\Sc|_{\pi^{-1}y})$. 

With this understood, we may now state the following lemma, due to Bismut~\cite{Bis}.
\begin{lemma}\label{def:trivpush} Let $\pi:X\to Y$ be a Riemannian family with compact and spin fibres. Let $(\mathbf{1},\ud)\to X$ be the trivial $\Zz$-graded vector bundle with superconnection defined above. Then
\[
 \pi_!\ud=\Dirac^\pi(\ud)+\nabla^{\pi_!\mathbf{1}}+\frac{1}{4}c^\pi(T^\pi),
\]
where $c^\pi$ is vertical Clifford multiplication. 
\end{lemma}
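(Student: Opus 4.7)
The plan is to establish the formula directly by verifying the defining property from Def.~\ref{def:pushforwardbundle}: it suffices to show that, after choosing an auxiliary metric $g^Y$ on $Y$ and forming $\Dirac_Y\bigl(\Dirac^\pi(\ud)+\nabla^{\pi_!\mathbf{1}}+\tfrac{1}{4}c^\pi(T^\pi)\bigr)$, the result coincides with the spin Dirac operator $\dslash^X = \Dirac_X(\ud)$ on $X$ equipped with $g^X = g^{X/Y}\oplus\pi^*g^Y$. Unitarity is transparent from the three summands (a fibrewise formally self-adjoint Dirac, a unitary connection, and a self-adjoint Clifford endomorphism), so by the uniqueness remark following Def.~\ref{def:pushforwardbundle} the identification will then follow.

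First I would pick adapted local orthonormal frames $\{e_i\}$ for $T(X/Y)$ and $\{f_\alpha\}$ for $T(Y)$, with horizontal lifts $\{\tilde f_\alpha\}$, and decompose
\[
\dslash^X = \sum_i c(e^i)\nabla^X_{e_i} + \sum_\alpha c(\tilde f^\alpha)\nabla^X_{\tilde f_\alpha}.
\]
Using the standard identities for the Levi-Civita connection of a Riemannian submersion (cf.\ BGV Ch.~10) expressing $\nabla^X$ in terms of $\nabla^\pi$, $\nabla^Y$, the second fundamental form $\secfun^\pi$, and the tensor $T^\pi$, I would group the resulting terms into three packages. The vertical package combines $\sum_i c(e^i)\nabla^\pi_{e_i}$ with the horizontal component of $\nabla^X_{e_i}$; a Koszul-type computation shows that the trace over $i$ of the second-fundamental-form piece is exactly $-\tfrac12 H^\pi$, the mean-curvature correction that upgrades $\nabla^\pi_{\tilde f_\alpha}$ to the unitary connection $\nabla^{\pi_!\mathbf{1}}_{f_\alpha}$ on the $\Ll$ completion of sections. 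The horizontal package then reads $\sum_\alpha c(\tilde f^\alpha)\nabla^{\pi_!\mathbf{1}}_{f_\alpha}$, which under the identification of spinors on $X$ with $\pi_!\Sp(X/Y)$-valued spinors on $Y$ is precisely the horizontal part of $\Dirac_Y(\nabla^{\pi_!\mathbf{1}})$. The remaining mixed contribution, arising from $P[\tilde f_\alpha,\tilde f_\beta] = T^\pi(f_\alpha,f_\beta)$ together with torsion-freeness of $\nabla^X$, produces $\tfrac14\sum_{\alpha,\beta} c(\tilde f^\alpha)c(\tilde f^\beta)\, c^\pi\bigl(T^\pi(f_\alpha,f_\beta)\bigr)$, which reads as $\Dirac_Y$ applied to the Clifford endomorphism $\tfrac14 c^\pi(T^\pi)$.

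The main obstacle is the bookkeeping of signs and normalisations. In particular, one must see the factor $\tfrac14$ appear from a double antisymmetrisation (one in the Koszul formula expressing $\nabla^X$ on horizontal vectors, one from Clifford skew-symmetry), and one must carefully distinguish pieces of $\nabla^X$ that are absorbed by the mean-curvature correction (which itself encodes the horizontal non-constancy $\nabla^\pi\mathrm{vol}^{X/Y}=-H^\pi\mathrm{vol}^{X/Y}$ of the vertical volume form) from those that survive as genuine Clifford endomorphism terms. Since the bundle and superconnection being pushed forward here are the trivial $(\mathbf{1},\ud)$, no $\omega_i$ terms enter; this lemma thus isolates the purely geometric contribution of the Riemannian submersion, and the general case is built from it by pushing forward the $\omega_i$ pieces fibrewise.
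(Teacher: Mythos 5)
Your route --- check that the right-hand side is a superconnection satisfying the defining property $\Dirac_Y(\cdot)=\Dirac_X(\ud)$ for any auxiliary metric $g^Y$, via the standard Riemannian-submersion identities for $\nabla^X$ (vertical, horizontal/mean-curvature, and $T^\pi$ packages), and then invoke the uniqueness remark following Def.~\ref{def:pushforwardbundle} --- is exactly the paper's (implicit) argument, which states the formula as Bismut's and defers precisely this computation to \cite{BGV} Ch.~10. One small correction that does not affect the argument: the degree-two coefficient $c^\pi\bigl(T^\pi(f_\alpha,f_\beta)\bigr)$ is skew-adjoint, not self-adjoint, which is exactly what the paper's definition of a unitary superconnection requires of a form-degree-two term.
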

\begin{remark}
Several remarks are in order. First, it is easy to compute that (see for example \cite{BGV} Ch.~10) 
\[
\Dirac_Y(\pi_!\ud)=\Dirac_X(\ud).
\]
Second, if $Y=\{\mathrm{pt}\}$, we have that $\pi_!\mathbf{1}\to\{\mathrm{pt}\}$ is simply the bundle of spinors on $X$, seen as a $\Zz$-graded vector space, and the pushforward $\pi_!\ud$ is exactly the standard Dirac operator on $X$. On the other extreme, if we take $X=Y$, so that fibres are single points, then $\pi_!\mathbf{1}\to Y$ is just the trivial even line bundle on $Y$, and $\pi_!\ud$ is the de Rham differential on the bundle.
\end{remark}
We now wish to construct the pushed-forward Dirac operator associated to a general $\Zz$-graded vector bundle over a Riemannian family. We need one more definition.
\begin{definition}
 Let $\pi:X\to Y$ be a Riemannian family, with compact and spin fibres. Let $\omega\in\Forms^i(X)$. Then the \emph{ pushforward} $\pi_!\omega\in\Forms(Y;\,\End(\pi_!\mathbf{1}))$ is the sum
\[
 \pi_!\omega=\sum_{j\le i}[\pi_!\omega]_\bb{j},
\]
where $[\pi_!\omega]_\bb{j}\in\Forms^j(Y;\,\End(\pi_!\mathbf{1}))$ is defined as
\[
 [\pi_!\omega]_\bb{j}(\tilde{\xi}^1,\ldots,\tilde{\xi}^j)=c^\pi\left(\intm(\tilde{\xi}^1)\cdots\intm(\tilde{\xi}^j)\omega\right),
\]
where the $\xi^k$ are vector fields on $Y$, $\intm$ denotes contraction, and $c^\pi(\cdot)$ denotes Clifford multiplication on $\Sp(X/Y)$ by the vertical part of the argument.
\end{definition}
With this understood, we may construct the pushforward for a general $\Zz$-graded vector bundle over a Riemannian family, with superconnection.
\begin{lemma}
 Let $\pi:X\to Y$ be a Riemannian family with compact and spin fibres. Let $E\to X$ be a finite dimensional, complex, hermitian, $\Zz$-graded vector bundle with superconnection $\Sc$.
 Decompose $\Sc=\nabla^E+\omega$, where $\nabla^E$ is an ordinary connection on $E$. Then
\[
 \pi_!\Sc=\Dirac^\pi(\nabla^E)+\nabla^{\pi_!E}+\frac{1}{4}c^\pi(T^\pi)+\pi_!\omega.
\]
Here
\[
 \nabla^{\pi_!E}=\nabla^{\pi_!\mathbf{1}}\stimes1+1\stimes\nabla^E.
\]
\end{lemma}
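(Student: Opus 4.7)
The plan is to verify directly that the proposed operator is a unitary superconnection on $\pi_!E$ satisfying the defining property $\Dirac_Y(\pi_!\Sc)=\Dirac_X(\Sc)$ for any Riemannian structure on $Y$; the uniqueness clause in the remark after Def.~\ref{def:pushforwardbundle} then identifies it with $\pi_!\Sc$.

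First I would confirm that the right-hand side is an odd, unitary derivation on $\Forms^\bullet(Y;\pi_!E)$. The piece $\nabla^{\pi_!E}$ is a unitary connection by construction; $\Dirac^\pi(\nabla^E)$ is an odd fibrewise endomorphism, hermitian by unitarity of $\nabla^E$; $\frac{1}{4}c^\pi(T^\pi)$ is a $2$-form valued in odd endomorphisms (vertical Clifford multiplication by a real vertical vector, hence anti-hermitian, which is exactly the sign required at form-degree $2$); and each component $[\pi_!\omega]_\bb{j}$ lies in $\Forms^j(Y;\End(\pi_!E))$ with endomorphism parity $1-j \bmod 2$, so the sum is odd in total degree. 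The hermitian/anti-hermitian pattern imposed on $\omega$ by unitarity of $\Sc$ translates through $c^\pi$ into the pattern required of a unitary superconnection on $\pi_!E$.

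Next I would split $\Sc=\nabla^E+\omega$ and handle the two summands. For the connection piece, Lemma~\ref{def:trivpush} already gives the formula in the case $E=\bbone$, $\Sc=\ud$. Twisting by $(E,\nabla^E)$ is local: writing $\nabla^E=\ud+A$ in a local frame, both $\Dirac^\pi(\nabla^E)-\Dirac^\pi(\ud)$ and $\nabla^{\pi_!E}-\nabla^{\pi_!\bbone}$ are obtained by inserting $A$, while $\frac{1}{4}c^\pi(T^\pi)$ depends only on the base geometry. A direct tensor computation then gives
\[
\Dirac_Y\bigl(\Dirac^\pi(\nabla^E)+\nabla^{\pi_!E}+\tfrac{1}{4}c^\pi(T^\pi)\bigr)=\Dirac_X(\nabla^E).
\]

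The main obstacle is the residual zeroth-order identity $\Dirac_Y(\pi_!\omega)=c_X(\omega)$, regarded as operators on sections of $\Sp(X)\stimes E$. After choosing a metric on $Y$, the isomorphism $\Sp(X)\cong\pi^*\Sp(Y)\stimes\Sp(X/Y)$ decomposes Clifford multiplication by a homogeneous form $\alpha\in\Forms^\bullet(X)$ along its horizontal/vertical bidegree: in a locally split orthonormal coframe $\{\tilde{e}^a\}\cup\{f^\mu\}$, if $\alpha=\alpha_{AB}\,\tilde{e}^A\wedge f^B$ then $c_X(\alpha)=\alpha_{AB}\,c_Y(\tilde{e}^A)\,c^\pi(f^B)$. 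On the other hand, unwinding the definition of $\pi_!\omega$, the component $[\pi_!\omega]_\bb{j}$ evaluated on horizontal lifts $\tilde{e}_{a_1},\ldots,\tilde{e}_{a_j}$ is $c^\pi\bigl(\intm(\tilde{e}_{a_1})\cdots\intm(\tilde{e}_{a_j})\omega\bigr)$, which isolates exactly the part of $\omega$ of horizontal bidegree $j$ and Clifford-multiplies its purely vertical residue. Applying $\Dirac_Y$ then couples this against horizontal Clifford multiplication by $\tilde{e}^{a_1}\wedge\cdots\wedge\tilde{e}^{a_j}$, and summing over $j$ and multi-indices reassembles $c_X(\omega)$ term by term. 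The genuinely delicate step is book-keeping of signs: those arising from the $\Zz$-grading of $\omega$, from reordering horizontal and vertical Clifford generators, and from the spinor isomorphism itself must all be checked to match. Once this is verified, combining with the previous paragraph yields $\Dirac_Y(\pi_!\Sc)=\Dirac_X(\nabla^E)+c_X(\omega)=\Dirac_X(\Sc)$, and uniqueness concludes the proof.
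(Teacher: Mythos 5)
Your proposal is correct and follows essentially the same route as the paper: the paper defines $\pi_!\Sc$ by the property $\Dirac_Y(\pi_!\Sc)=\Dirac_X(\Sc)$ together with uniqueness, and the lemma is verified by checking that the explicit formula (the Bismut superconnection for $(\mathbf{1},\ud)$ twisted by $(E,\nabla^E)$, plus the term $\pi_!\omega$ matching $c_X(\omega)$ under $\Sp(X)\cong\pi^*\Sp(Y)\stimes\Sp(X/Y)$) satisfies this property, with the remaining sign bookkeeping deferred to \cite{K1} and \cite{BGV} just as you defer it.
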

\begin{remark}
 If $\Sc$ is unitary, so is $\pi_!\Sc$.
\end{remark}
\begin{remark}
 We see that $[\pi_!\Sc]_\bb{0}=\Dirac^\pi(\Sc)$.
\end{remark}
\begin{remark}
 As required, if $Y$ is spin, then with any choice of Riemannian structure on $Y$, one easily sees that $\Dirac_Y(\pi_!\Sc)=\Dirac_X(\Sc)$. 
\end{remark}
\begin{remark}
 If $Y=\{\mathrm{pt}\}$, we have $\pi_!\Sc=\Dirac(\Sc)$, and if $X=\{\mathrm{pt}\}$, $\pi_!E=E$ and $\pi_!\Sc=\Sc$.
\end{remark}
\begin{remark}
 The $\R^{\times}$-action on Riemannian maps (Eq.~\ref{eq:rescaledconnection}), and the $\R^\times$-action on superconnections (Eq.~\ref{eq:raction}) intertwines:
\begin{equation}\label{eq:actionintertwine}
 \pi_!^t\Sc=(\pi_!\Sc^{1/t})^t.
\end{equation}
\end{remark}
\begin{remark}\label{rem:alimdirac}
 The superconnection may be very suggestively written in terms of the degenerate Clifford module $\Sp_0=\pi^*\bigwedge(T^*Y)\stimes \Sp(X/Y)$ (discussed in Rem. \ref{rem:clifflim}), which we recall is a module for $\alim\Cliff(X)$. In terms of the degenerate Clifford multiplication, the pushed-forward superconnection may be written
\[
 \pi_!\Sc=c_0(\alim \nabla^X\stimes1+1\stimes\Sc),
\]
where $\alim\nabla^X$ is the adiabatic limit of the Levi-Civita connection on $X$. In this way, $\pi_!\Sc$ may be regarded as the adiabatic limit of the Dirac operator coupled to $\Sc$ on $X$.
\end{remark}
\subsection{The Chern character}\label{sec:chernfam}
Let $\pi:X\to Y$ be a Riemannian family, with compact and spin fibres, and $E\to X$ be a finite dimensional, complex, hermitian, $\Zz$-graded vector bundle with unitary superconnection $\Sc$. The Chern character form associated with $\pi_!\Sc$ is defined by
\begin{equation}\label{eq:cherncharacterfamily}
 \sCh\pi_!\Sc=\sTr e^{-(\pi_!\Sc)^2}.
\end{equation}
Some work is needed to understand Eq.~\ref{eq:cherncharacterfamily}, and to show that the Chern character is well defined. Details may be found in the appendix of Ch.~9 in \cite{BGV}. We sketch the main points. A priori, we have $(\pi_!\Sc)^2\in\Forms(Y;\,\End(\pi_!E))^\even$. Calculation shows that at $x\in X$, $(\pi_!\Sc)^2$ is a local second order differential operator acting on $\Sp(X/Y)\stimes E$, with coefficients in $\bigwedge T^*(Y)_x$, of the form
\[
 (\pi_!\Sc)^2=\Dirac^\pi(\Sc)^2+N_x,
\]
where $N_x$ is a nilpotent first order differential operator, its coefficients being of strictly positive degree. Standard elliptic theory shows $e^{-\Dirac^\pi(\Sc)^2}$ to be trace class. Applying Duhamel's formula to $e^{-(\pi_!\Sc)^2}$ yields
\[
 e^{-(\pi_!\Sc)^2}=\sum_{i\ge0}(-1)^i\left[e^{-\Dirac^\pi(\Sc)^2}N_x\right]^{\# k}\#e^{-\Dirac^\pi(\Sc)^2},
\]
where
\begin{equation}\label{eq:Duhamel}
 e^AB\#e^C=\int_0^1\ud s\,e^{sA}Be^{(1-s)C}.
\end{equation}
The nilpotency of $N_x$ makes the sum finite, showing that $\sCh(\pi_!\Sc)\in\Omega(Y)$ is indeed well defined. 

Some remarks further remarks on the nature of the Chern character and the curvature are in order. We have seen that $\curv{\pi_!\Sc}$ is a second order elliptic differential operator on each fibre. The operator
\[
e^{-t(\pi_!\Sc)^2}
\]
is the semigroup associated to heat operator
\begin{equation}\label{eq:heatfibre}
\frac{\partial}{\partial t}+(\pi_!\Sc)^2
\end{equation}
acting on elements of $\wedge T^*Y_y\stimes(\pi_!E)_y$, $y\in Y$, where we recall that elements of $(\pi_!E)_y$ are sections of $(\Sp(X/Y)\stimes E)|_{\pi^{-1}\{y\} }\to\pi^{-1}\{y\}$. It is a smoothing operator, so represented by a family of smooth integral kernels $p(t;x,x'):(\wedge T^*Y)\stimes E_x\to(\wedge T^*(Y))\stimes E_{x'}$ (only defined when $\pi(x)=\pi(x')=y$). The Chern character, then, is computed by
\begin{equation}\label{eq:chernasintegral}
(\sCh\pi_!\Sc)(y)=\int_{\pi^{-1}\{y\}}\ud x\,\sTr p(1;x,x).
\end{equation}
\subsection{A families index theorem}
We are now in a position to state and prove the main theorem of this section.
\begin{theorem}\label{th:families}
 Let $\pi:X\to Y$ be a Riemannian family, with compact and spin fibres of dimension $n$. Let $E\to X$ be a finite dimensional, complex, hermitian, $\Zz$-graded vector bundle with unitary superconnection $\Sc$. Then 
\[
 \lim_{t\to0}\sCh\pi_!^t\Sc=(2\pi i)^{-n/2}\pi_*\Ahat(\Omega^\pi)\sCh(\Sc).
\]
\end{theorem}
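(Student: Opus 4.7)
The plan is to reduce Theorem \ref{th:families} to a fibrewise version of the local index theorem proved in Section \ref{sec:localindex}, combining Getzler scaling with the adiabatic geometry provided by the Riemannian map. Using Eq.~\ref{eq:chernasintegral} together with the intertwining relation Eq.~\ref{eq:actionintertwine}, I would rewrite
\[
(\sCh\pi_!^t\Sc)(y)=\int_{\pi^{-1}\{y\}}\ud x\,\sTr q_t(1;x,x),
\]
where $q_t(u;x,x')$ is the heat kernel on the fibre $\pi^{-1}\{y\}$ of the second-order operator $(\pi_!^t\Sc)^2=((\pi_!\Sc^{1/t})^t)^2$. Because $\pi_!^t\Sc$ is the Bismut-type superconnection for the rescaled vertical metric $|t|^{-1}g^{X/Y}$, standard parabolic scaling in the vertical direction identifies $q_t(1;x,x)$ with a dilation of the heat kernel of $(\pi_!\Sc^{1/t})^2$ evaluated at small time. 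In this form the limit $t\to 0$ precisely mirrors the coupled small-time/superconnection-rescaling limit $p_{t,1/t}(x,x)$ that appears in Th.~\ref{th:localindex}.

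The next step is to establish a families analogue of Prop.~\ref{prop:asymptote}: for fixed $y\in Y$ and $x\in\pi^{-1}\{y\}$, the diagonal heat kernel has a small-$t$ asymptotic expansion
\[
q_t(1;x,x)\sim\sum_{k\ge 0}B_k(x,y)\,t^{k-n/2},
\]
with coefficients lying in $\bigwedge T_y^*Y\stimes\Cliff(X/Y)_x\stimes\End(E_x)$ and depending smoothly on data. I would prove this exactly as in the proof of Prop.~\ref{prop:asymptote}: identify a fibrewise neighbourhood of $x$ with a ball in $T_x(\pi^{-1}\{y\})$ via the vertical exponential map, extend the geometry, and introduce a family of scaled heat operators $H_\eps^{\mathrm{fam}}$ analogous to Eq.~\ref{eq:heps}. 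The finiteness of the Duhamel expansion (Eq.~\ref{eq:Duhamel}, truncated by the nilpotency of $N_x$) together with the standard decay of the fibrewise heat kernel off the diagonal allows the proof of Prop.~\ref{lem:kerneleps} to go through verbatim, uniformly in the base coordinates.

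Given the asymptotic expansion, I would apply Getzler's rescaling $S_\eps=U_\eps T_\eps^*$ (Eq.~\ref{eq:algscale}) on the fibre to compute the constant term. The crucial point, using Rem.~\ref{rem:alimdirac}, is that $\pi_!\Sc$ is itself the adiabatic limit of a Dirac operator, so its rescaled model operator picks up contributions from \emph{both} the fibre curvature and the horizontal forms. Combining Lemma~\ref{lem:sclim} with the adiabatic splitting $\alim\Curv{X}=\Curv{\pi}\oplus\pi^*\Curv{\rho}$ yields a model operator analogous to Lemma~\ref{lem:heatlim} whose heat kernel at time $1$ is, by Mehler's formula,
\[
(4\pi)^{-n/2}\sqrt{\det\!\left(\frac{\Omega^\pi/2}{\sinh(\Omega^\pi/2)}\right)}\,e^{-\extm(\curv{\Sc^{1/t}})(x)}
\]
with everything acting on $\Sp_0$ via the degenerate Clifford multiplication $c_0$. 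Extracting the top vertical form degree via the supertrace formula used in Section \ref{sec:localindex}, and integrating over the fibre, produces exactly $(2\pi i)^{-n/2}\pi_*[\Ahat(\Omega^\pi)\sCh\Sc]$ at $y$.

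The main obstacle is the families analogue of Prop.~\ref{prop:asymptote}: controlling the rescaled family $q_t(1;x,x)$ uniformly in $y$ and keeping track of how the horizontal form components (contributed by $\pi_!\omega$ and by the connection term $\tfrac14 c^\pi(T^\pi)$) interact with the parabolic scaling $T_\eps$. In particular one needs the estimates of \cite{K1}, Prop.~3.23 and Prop.~3.24, in a form depending polynomially on $|s|^{\pm1/2}$ and on the horizontal parameters, so that both terms in the difference $q_t(1;x,x)-q_{-t}(1;x,x)$ vanish to all orders in $\sqrt{t}$, forcing the parity that makes the odd powers in the asymptotic expansion drop out. Once this is in place, the proof reduces mechanically to the argument of Section \ref{sec:localindex}, with the base forms from $\pi^*\bigwedge T^*Y$ being carried along passively by the trace computation.
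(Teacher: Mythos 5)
Your proposal is correct and follows essentially the same route as the paper: the paper likewise reduces Th.~\ref{th:families} (via the intertwining relation and the argument of \cite{BGV} Ch.~10) to a fibrewise local statement, Prop.~\ref{prop:localfamilies}, which it proves by running the excision/blow-up argument of Prop.~\ref{prop:asymptote} and the Getzler rescaling $S_\eps=U_\eps T^*_\eps$ on the fibre in the degenerate Clifford module $\Sp_0$, obtaining the model operator and its Mehler kernel, and extracting the top vertical degree under the supertrace before integrating over the fibre. The analytic uniformity you flag (the $s$-dependent estimates from \cite{K1}) is exactly what the paper also defers to \cite{K1}, so there is no substantive difference in approach.
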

The $\R^\times$-action on superconnections does not appear explicitly in this statement -- nonetheless it is there. Recall that the $\R^\times$-action on Riemannian maps intertwines with the $\R^\times$-action on superconnections (Eq.~\ref{eq:actionintertwine}). In fact, it is this intertwining that motivates the action on superconnections in the first place.

We will prove a ``local'' version of the theorem. We recall the discussion in Sec.~\ref{sec:chernfam} -- the curvature of the superconnection $\pi_!\Sc$ is a second order elliptic differential operator on the fibres, acting on sections of $\wedge T_y^*Y\stimes(\Sp(X/Y)\stimes E)|_{\pi^{-1}\{y\}}$ for a fibre over $y\in Y$. The Chern character is a smoothing operator on these sections, and thus represented by an integral kernel:
\[
\exp[-t(\pi_!\Sc^s)^2]\phi(\cdot)=\int_{\pi^-1\{y\}}\ud x\,p_{t,s}(\cdot,x)\phi(x).
\]
\begin{proposition}\label{prop:localfamilies}
Let $p_{t,s}(x,y)$ be the integral kernel for $\exp[-t(\pi_!\Sc^s)^2]$. Then there exists a small $t$ asymptotic expansion
\[
 p_{t,s/t}(x,x)\sim(4\pi t)^{-n/2}\sum_{i\ge0}t^iA_i(x),
\]
where
\begin{enumerate}
 \item $A_i(x)\in\sum_{j\le2i}\Forms^j(X;\End_{\Cliff(X/Y)}(E))$,
 \item The full symbol of $p_{t,s/t}(x,x)$, defined by 
\[
\sigma(p)=\sum_{i=0}^{\dim X/2}[\sigma(A_i)]_\bb{2i},
\]
where we recall $\sigma$ on the right is the symbol map defined in Eq.~\ref{eq:cliffsymbol}, is given by
\begin{equation}\label{eq:fullsymbol}
 \sigma(p)=\Ahat(\Omega^\pi)\exp(-\Sc^2).
\end{equation}
\end{enumerate}
\end{proposition}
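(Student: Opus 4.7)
The approach is to adapt the strategy used for Theorem~\ref{th:localindex} to the fibrewise situation, combining Duhamel's formula (to reduce to the genuine fibre Laplacian of $\Dirac^\pi(\Sc^s)$) with a Getzler rescaling applied only in the vertical directions. The crucial conceptual input is Remark~\ref{rem:alimdirac}: since $\pi_!\Sc$ is the adiabatic-limit Dirac operator on the degenerate Clifford module $\Sp_0=\pi^*\bigwedge T^*Y\stimes \Sp(X/Y)$, horizontal multiplication is already exterior multiplication, so no rescaling is needed in horizontal directions.

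\textbf{Step 1: existence of the asymptotic expansion.}
Following Section~\ref{sec:chernfam}, write $(\pi_!\Sc^s)^2=\Dirac^\pi(\Sc^s)^2+N^s$, where $N^s$ is a first-order differential operator on the fibre with coefficients in $\bigwedge^{>0}T^*Y$, hence nilpotent of order at most $\dim Y$. Duhamel's formula (Eq.~\ref{eq:Duhamel}) gives the finite expansion
\[
e^{-t(\pi_!\Sc^s)^2}=\sum_{k=0}^{\dim Y}(-1)^k\bigl[e^{-t\Dirac^\pi(\Sc^s)^2}N^s\bigr]^{\#k}\#\, e^{-t\Dirac^\pi(\Sc^s)^2}.
\]
Applying Proposition~\ref{prop:asymptote} fibrewise (with parameter $s/t$) produces a small $t$ asymptotic expansion for each fibre heat kernel appearing above; each convolution $\#$ introduces an additional power of $t$ from integration over $[0,1]$, and each factor of $N^s$ contributes at least one unit of form degree in $\bigwedge T^*Y$. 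Tracking these contributions, together with the standard Getzler bound (form degree $+$ Clifford degree $\le$ twice the power of $t$) for the vertical heat kernel, yields the stated bound $j\le 2i$.

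\textbf{Step 2: symbol computation via vertical Getzler rescaling.}
Fix $x\in X$ with $\pi(x)=y$. Identify a fibrewise neighbourhood of $x$ with a ball in $T_{x,v}(X/Y)$ via the vertical exponential map, and trivialize $\Sp(X/Y)$ and $E$ by radial parallel transport along the fibre. Form the scaled family of operators
\[
\widetilde H_\eps=\frac{\partial}{\partial t}+\eps^2 S_\eps(\pi_!\Sc^{s/\eps^2})^2S_\eps^{-1},
\]
with $S_\eps=U_\eps T_\eps^*$ as in Section~\ref{sec:localindex}, acting non-trivially only in the vertical directions. The argument of Lemma~\ref{lem:heatlim} now applies in its family version (using Lemma~\ref{lem:sclim} on the superconnection part and the fact that horizontal factors pass through the rescaling unchanged) to give
\[
\lim_{\eps\to0}\widetilde H_\eps=\frac{\partial}{\partial t}-\sum_{k,l}\Bigl(\frac{\partial}{\partial\xi^k}-\tfrac14\extm(\Omega^\pi_{kl}(x))\xi^l\Bigr)^2+\extm(\curv{\Sc^s})(x).
\]
Mehler's formula then identifies the limit heat kernel with
\[
(4\pi t)^{-n/2}\sqrt{\det\Bigl(\tfrac{t\Omega^\pi/2}{\sinh(t\Omega^\pi/2)}\Bigr)}\exp\bigl(-t(\Sc^s)^2\bigr).
\]
Setting $\eps=\sqrt t$ and $s=1$ as in the end of the proof of Theorem~\ref{th:localindex} extracts the full symbol $\sigma(p)=\Ahat(\Omega^\pi)\exp(-\Sc^2)$.

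\textbf{Main obstacle.}
The central technical difficulty is justifying smoothness of the scaled heat kernels in $\eps$ at $\eps=0$, which in the absolute setting required the blowup comparison of Proposition~\ref{lem:kerneleps}. I would establish a parametrised analogue: extend the vertical fibre geometry trivially outside a ball in $T_{x,v}(X/Y)$, patch the resulting scaled fibre heat operator to the true one, and show that the difference decays exponentially as $t\to 0$. Two points require care. First, the nilpotent corrections $N^s$ must be separated out via Duhamel so that the uniform elliptic estimates of \cite{K1} can be applied to the honest fibre Laplacian $\Dirac^\pi(\Sc^s)^2$. Second, one must verify that the constants in those estimates remain uniform in the base point $y\in Y$, which is straightforward on a compact base but needs to be explicitly checked since the superconnection parameter is being scaled simultaneously with the spatial blow-up.
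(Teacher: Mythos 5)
Your Step 2 contains a genuine gap: the Getzler rescaling cannot be applied ``only in the vertical directions.'' In the paper's proof the deformation $\hat{U}_\eps$ is defined on all of $\bigwedge T^*_xX$, so horizontal form degree is rescaled on exactly the same footing as vertical Clifford degree; this is what produces the factor $\eps^{\,n-|I|-|J|}$ in
\[
\tilde{p}^\eps_{t,s}(x,x)=\sum_{I,J}\eps^{\,n-|I|-|J|}\,p^{IJ}_{\eps^2t,\,s/\eps^2}(x,x)\,\hat{e}^If^J,
\]
and hence both the vanishing $A^{IJ}_j=0$ for $|I|+|J|>2j$ (part (1) of the proposition) and the identification of the full symbol (part (2)). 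Your justification --- horizontal multiplication is already exterior multiplication, so needs no rescaling --- conflates two distinct roles of the rescaling: degenerating Clifford into exterior multiplication (indeed unnecessary horizontally), and compensating the suppression of higher form degree caused by the coupled scaling of the superconnection parameter. As used at the end of the proof of Th.~\ref{th:localindex}, the $k$-form part of $t(\Sc^{s/t})^2$ is of order $t^{k/2}$, and the same is true of the horizontal form components of $t(\pi_!\Sc^{s/t})^2$ (e.g.\ the purely horizontal two-form part of the twisting curvature, which carries no vertical Clifford degree to supply compensating powers of $\eps$). Under your vertical-only $S_\eps$ these terms are $O(\eps^{2})$ and drop out of $\lim_{\eps\to0}\eps^2S_\eps(\pi_!\Sc^{s/\eps^2})^2S_\eps^{-1}$, so the limit operator you write down, containing the full $\extm\bigl((\Sc^s)^2\bigr)(x)$ and the full $\Omega^\pi$, does not actually follow; what survives is only the vertical content, i.e.\ you would recover the fibrewise density of Th.~\ref{th:localindex} (the components of $\sigma(p)$ with no horizontal degree) rather than $\Ahat(\Omega^\pi)\exp(-\Sc^2)$. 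Relatedly, ``horizontal factors pass through the rescaling unchanged'' ignores the mixing $c_0(e_i)=c(e_i)+\sum_\alpha u^\alpha_i\extm(f_\alpha)$ that arises when one trivialises with the adiabatic connection $\alim\nabla^X$; controlling this mixing (the $u^\alpha_i=O(|x'|)$ lemma) is part of why the paper sets up the rescaling on the whole of $\bigwedge T^*_xX$.

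Two further points. In Step 1, the bound $j\le 2i$ concerns the \emph{total} (horizontal plus vertical) degree, and it does not follow from Duhamel together with a vertical Getzler bound alone: the explicit power $t^k$ in the $k$-th Duhamel term is partly eaten because each first-order factor $N^s$ costs $t^{-1/2}$ on the diagonal, and $N^s$ also carries vertical Clifford degree; in the paper both parts (1) and (2) come out of a single counting argument based on the full rescaling, not from a separate Duhamel step. On the positive side, your ``main obstacle'' paragraph correctly identifies the analytic point the paper leaves implicit (smoothness in $\eps$ at $\eps=0$ via a family version of Prop.~\ref{lem:kerneleps}, with constants uniform over the compact base); but to repair the proof you must replace the vertical-only $S_\eps$ by the full rescaling of $\bigwedge T^*_xX$ and redo the limit computation as in the paper's Lemma following Eq.~\ref{eq:slimfam}.
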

We note that Th.~\ref{th:localindex} is recovered by setting $Y=\{\mathrm{pt}\}$. The argument in \cite{BGV} Ch.~10 goes through to show Prop.~\ref{prop:localfamilies} implies Th.~\ref{th:families} (the details may be found in \cite{K1}).

The proof of Prop.~\ref{prop:localfamilies} is very similar to that of Th.~\ref{th:localindex}, and we will proceed quickly, exposing the differences, but not dwelling unnecessarily on the details. The general strategy is to follow the proof of Prop.~\ref{th:localindex}, but instead of working on the entire manifold, to work on a point on the fibre, and examine the heat kernel associated to $\pi_!\Sc$ in a neighbourhood of that point on the fibre.

We begin with an analogue of the Weitzenb\"ock formula.
\begin{lemma}\label{lem:weitzenfamily}
 The curvature of the superconnection $\pi_!\Sc$ is given by
\[
 \curv{\pi_!\Sc}=(\nabla^\pi)^*\nabla^\pi+\frac{1}{4}R^\pi+Q_0,
\]
where $(\nabla^\pi)^*\nabla^\pi$ is the covariant laplacian built from the fibre Levi-Civita connection, $R^\pi$ is the fibre scalar curvature, and 
\[
 Q_0=c_0(\Omega^E)+[c_0(\nabla),c_0(\omega)]+c_0(\omega)^2,
\]
 where we have written $\Sc=\nabla+\omega$.
\end{lemma}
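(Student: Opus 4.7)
The plan is to exploit the suggestive reformulation from Remark~\ref{rem:alimdirac}, which expresses
\[
\pi_!\Sc = c_0\left(\alim\nabla^X\stimes 1+1\stimes\Sc\right),
\]
acting on sections of $\Sp_0=\pi^*\bigwedge T^*Y\stimes\Sp(X/Y)$. In this form, squaring $\pi_!\Sc$ reduces to a Lichnerowicz-type computation with the degenerate Clifford action $c_0$, which acts by Clifford multiplication in the vertical directions and by exterior multiplication in the horizontal directions. This bifurcation cleanly separates the terms that yield the fibrewise covariant Laplacian plus scalar curvature from those terms that yield $Q_0$.

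First I would carry out the purely vertical part of the square. Since $c_0$ restricts to ordinary vertical Clifford multiplication $c^\pi$ on $\Sp(X/Y)$, the standard Lichnerowicz formula applied fibrewise yields
\[
(\Dirac^\pi(\nabla^E))^2=(\nabla^\pi)^*\nabla^\pi+\tfrac{1}{4}R^\pi+c^\pi(\Omega^E|_{vv}),
\]
where $\Omega^E|_{vv}$ denotes the vertical-vertical part of the curvature of $\nabla^E$. This produces the $(\nabla^\pi)^*\nabla^\pi+\tfrac{1}{4}R^\pi$ contribution, together with a first piece of $c_0(\Omega^E)$.

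Next I would compute the remaining contributions by expanding the square of the horizontal plus $\omega$-pieces of $\pi_!\Sc$. Because horizontal multiplication under $c_0$ is exterior (so any two horizontal factors anticommute), the horizontal ``Clifford'' anticommutators that would ordinarily contribute a Laplacian vanish, and only the curvature-type terms survive. The mixed vertical-horizontal cross terms, together with the $\nabla$-$\omega$ cross terms from squaring $\alim\nabla^X\stimes 1+1\stimes\Sc$, assemble into the commutator $[c_0(\nabla),c_0(\omega)]$, and the purely $\omega$-$\omega$ contributions collect into $c_0(\omega)^2$. Tracking the remaining bigraded pieces of $\Omega^E$ completes the term $c_0(\Omega^E)$.

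The main obstacle will be the careful bookkeeping of the adiabatic-limit corrections, specifically verifying that the mean-curvature half-density twist $\tfrac{1}{2}H^\pi$ appearing in $\nabla^{\pi_!\bbone}$ and the tensor $\tfrac{1}{4}c^\pi(T^\pi)$ from Lemma~\ref{def:trivpush} combine with the off-diagonal ``$*$" block of $\alim\nabla^X$ in such a way that \emph{all} extra torsion and mean-curvature contributions cancel in the square, leaving only the fibre scalar curvature $\tfrac{1}{4}R^\pi$ as an exotic remainder. This is precisely the Bismut cancellation for pushed-forward Dirac operators coupled to ordinary connections; here one must verify that introducing $\omega$ does not disrupt it, which follows because $\omega$ enters algebraically and commutes past the geometric identities needed for the cancellation. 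A full proof along these lines appears in \cite{K1}.
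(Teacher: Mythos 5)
Your proposal takes essentially the same route as the paper: the paper's proof likewise invokes Remark~\ref{rem:alimdirac} to regard $\pi_!\Sc$ as the adiabatic limit of the Dirac operator on $X$ coupled to $\Sc$ and then takes the adiabatic limit of the usual Weitzenb\"ock (Lichnerowicz) formula. Your explicit bookkeeping of the vertical, horizontal and cross terms, and of the Bismut cancellation of the torsion and mean-curvature contributions, is just a fleshed-out version of that same argument and is correct.
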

\begin{proof}
 We recall that the Bismut superconnection may be thought of the Dirac operator on $X$ (Rem. \ref{rem:alimdirac}), and take the adiabatic limit of the usual Weitzenb\"ock formula. This shows the result.
\end{proof}
Let us now choose $x\in X$, with $y=\pi(x)$. Denote the fibre over $y$ by $X_y$, and choose $r$ such that the exponential map
\[
\exp:B_r(0)\subset T_x(X_y)\to U\subset X_y
\]
is a diffeomorphism of the ball of radius $r$ to an open neighbourhood of $x$. As in the proof of Th.~\ref{th:localindex}, we pull back geometry on $U$ to $T_x(X_y)\cong(T_x(X/Y))$, and extend it to the whole of $T_x(X_y)$. We use the same symbols to denote the geometry on $U$ and $T_x(X_y)$. We choose an orthonormal framing $\{e^i\}$ of $T_x(X_y)$ and a framing $\{f_\alpha\}$ of $T^*(Y)$. Using parallel transport along radial geodesics with respect to the vertical Levi-Civita connection, we may transport the co-frame $\{e_i,f_\alpha\}$ to all of $U$. Similarly, using the parallel transport with respect to the connection $\alim\nabla^X$, we trivialise the bundle $\Cliff_0(X_y)$ and $\Sp_0(X_y)$ on $U$ and may thus identify the degenerate Clifford algebra $\Cliff_0(X)$ with $\bigwedge^\bullet(T^*_xX)$. Care must be taken with the multiplication as $\alim\nabla^X$ is not diagonal, and mixes the $e_i$ and $f_\alpha$. The following lemma (Lemma 10.25 in \cite{BGV}) shows that this mixing is mild.
\begin{lemma}
 The action of the degenerate Clifford algebra $\bigwedge^\bullet(T^*_xX)$ on $\Sp_0(X)\cong\bigwedge^\bullet T^*_yY\stimes \Sp(X/Y)$ (cf. Rem. \ref{rem:clifflim}), denoted by $c_0(\cdot)$ is given by
\begin{eqnarray*}
c_0(e_i)&=&c(e_i)+\sum_\alpha u^\alpha_ie(f_\alpha),\\
c_0(f_\alpha)&=&\extm(f_\alpha), 
\end{eqnarray*}
on $U$, where $c(\cdot)$ is Clifford multiplication on $\Sp(X/Y)$, $\extm(\cdot)$ is exterior multiplication, and the $u^\alpha_i(x')\in\C^\infty(U)$ are $O(|x'|)$.
\end{lemma}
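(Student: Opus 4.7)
The plan is to verify the two formulas by exploiting the fact that $\alim\nabla^X$ preserves the action $c_0$ of $\alim\Cliff(X)$ on $\Sp_0$, reducing everything to a frame-change computation for parallel transport. At $x'=x$ the identities hold by construction (Rem.~\ref{rem:clifflim}): on $\Sp_0=\pi^*\bigwedge T^*Y\stimes \Sp(X/Y)$, vertical covectors act by Clifford multiplication on the $\Sp(X/Y)$ factor and horizontal covectors by exterior multiplication on the $\pi^*\bigwedge T^*Y$ factor. For $x'\in U$, let $P_{x'}$ denote $\alim\nabla^X$-parallel transport from $x'$ to $x$ along the radial geodesic in $X_y$, using the same notation for the induced maps on $T^*X$ and $\Cliff_0(X)$. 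Clifford compatibility of $\alim\nabla^X$ yields the intertwining
\[
c_0|_{x'}(a)\;=\;P_{x'}^{-1}\circ c_0|_x\bigl(P_{x'}(a)\bigr)\circ P_{x'},
\]
so after identifying $\Sp_0|_{x'}\cong\Sp_0|_x$ via $P_{x'}$, I need only compute $P_{x'}(e_i(x'))$ and $P_{x'}(f_\alpha(x'))$ in the basis $\{e_j(x),f_\beta(x)\}$.

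The key geometric input is the block structure of $\alim\nabla^X$ recalled in the preceding proposition: dualising to $T^*X$, horizontal covectors stay horizontal under $\alim\nabla^X$-parallel transport, while vertical covectors may acquire horizontal components. Since the $f_\alpha$ are $\pi^*$-pulled back from $Y$ and $x,x'$ lie in the same fibre $X_y$, the induced $\pi^*\nabla^\rho$-transport along the vertical geodesic is trivial on horizontal covectors, so $P_{x'}(f_\alpha(x'))=f_\alpha(x)$ and $c_0|_{x'}(f_\alpha(x'))=\extm(f_\alpha)$ exactly. For the $e_i$, the frame $\{e_i(x')\}$ is obtained from $\{e_i(x)\}$ by $\nabla^\pi$-parallel transport, equivalently by parallel transport with the block-diagonal connection $\nabla^\pi\oplus\nabla^\rho$; the discrepancy with $\alim\nabla^X$ is governed entirely by the off-diagonal tensor $*$, whose dual sends vertical covectors into horizontal ones. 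A direct comparison of the two parallel transports along the radial geodesic then gives
\[
P_{x'}(e_i(x'))\;=\;e_i(x)+\sum_\alpha u_i^\alpha(x')\,f_\alpha(x),
\]
with smooth $u_i^\alpha$ vanishing at $x'=x$, hence $u_i^\alpha(x')=O(|x'|)$. Applying $c_0|_x$ to both sides yields the claim.

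The principal technical point requiring care is the assertion that $\alim\nabla^X$ is really a Clifford connection for the degenerate $\alim\Cliff(X)$-action on $\Sp_0$, so that parallel transport does intertwine $c_0$. Both the algebra and the module are obtained as limits of honest Clifford structures under rescaling of the horizontal metric (as indicated in Rem.~\ref{rem:clifflim}); the unrescaled Levi-Civita connection is a Clifford connection for each finite rescaling, and I would verify that this compatibility survives the adiabatic limit by continuity of the structures involved. The $O(|x'|)$ estimate for the $u_i^\alpha$ itself is routine once the off-diagonal tensor has been identified, since the difference of the two parallel transports vanishes at the base point and depends smoothly on the geodesic parameter.
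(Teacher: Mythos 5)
Your argument is correct and follows essentially the same route as the paper, which simply defers to the radial-coordinate computation of Lemma~10.25 in \cite{BGV}: trivialise $\Sp_0$ by $\alim\nabla^X$-parallel transport, note that horizontal covectors remain parallel while the off-diagonal block of $\alim\nabla^X$ (i.e.\ the fact that $(\alim\nabla^X)e^i\ne0$) forces the $\nabla^\pi$-transported vertical coframe to acquire horizontal components $u^\alpha_i f_\alpha$ vanishing at the base point, whence the stated formula and the $O(|x'|)$ estimate. Your explicit flagging and handling of the Clifford-compatibility of $\alim\nabla^X$ with the degenerate action is the same point the paper leaves implicit, so there is no substantive divergence.
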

The proof is given in \cite{BGV}, and is a simple computation in the radial coordinates -- the functions $u^\alpha$ come from the fact that $(\alim\nabla^X)e^i\ne0$.

We now introduce an algebraic deformation $\hat{U}_\eps:\bigwedge T^*_x(X)\to \bigwedge T^*_x(X)$ analogous to that in Eq.~\ref{eq:algscale}. It is defined by
\[
 \hat{U}_\eps:\hat{a}\mapsto\eps^{-|\hat{a}|}\hat{a},
\]
on homogeneous elements, and, identifying $\Cliff(X/Y)$ with $\bigwedge T^*(X/Y)$, we extend this to a map $U_\eps:T_y^*(Y)\stimes\Cliff(X_y)\to \bigwedge T^*_x X$. We also introduce the geometric deformation $T_\eps:T(X_x)\to T(X_x)$ as follows
\[
T_\eps:\xi\mapsto\eps\xi.
\]
Defining $S_\eps=U_\eps T^*_\eps$, we see easily that
\begin{equation}\label{eq:slimfam}
 \lim_{\eps\to0}\eps^{|a|} S_\eps c_0(a)S^{-1}_\eps=\extm(\hat{a})
\end{equation}
for homogeneous $a\in\Cliff(X_y)\to \bigwedge T^*_x$. Define the family of heat operators
\begin{eqnarray*}
 H_\eps&=&\frac{\partial}{\partial t}+P_\eps,\\
P_\eps&=&S_\eps\left(\pi_!\Sc^{s/\eps^2}\right)^2S^{-1}_\eps.
\end{eqnarray*}
The following lemma is key.
\begin{lemma}
 The family of operators $P_\eps$ has a limit as $\eps\to0$. In the framing above, it is given by
\[
 P_0=\lim_{\eps\to0}P_\eps=-\sum_{i,j}\left(\frac{\partial}{\partial \xi^i}-\frac{1}{4}e\left(\Omega^{X/Y}_{ij}(x)\right)\xi^j\right)^2+\extm\left((\Sc^s)^2\right)(x),
\]
where the $\xi^i$ are coordinates on $T_x(X_y)$.
\end{lemma}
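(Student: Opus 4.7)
The plan is to follow the Getzler rescaling argument used in Lemma~\ref{lem:heatlim}, adapted to the degenerate Clifford multiplication $c_0$ and to the Weitzenb\"ock decomposition of Lemma~\ref{lem:weitzenfamily}. Applied to $\Sc^{s/\eps^2}$, the latter gives
\[
\bb{\pi_!\Sc^{s/\eps^2}}^2 = (\nabla^\pi)^*\nabla^\pi + \tfrac{1}{4}R^\pi + Q_0^{s/\eps^2},
\]
and I would treat each summand independently under conjugation by $S_\eps=U_\eps T^*_\eps$. For the vertical covariant Laplacian, parallel transport with respect to $\alim\nabla^X$ and vertical normal coordinates centred at $x$ arrange for $\nabla^\pi_i = \partial_i - \tfrac{1}{4}c_0\bb{\Omega^{X/Y}_{ij}(x)}\xi^j + O(|\xi|^2)$. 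The rescaling $T^*_\eps$ sends $\xi^j \mapsto \eps\xi^j$ and $\partial_i \mapsto \eps^{-1}\partial_i$, while $U_\eps$ on the degree-two Clifford symbol contributes $\eps^{-2}$; these conspire via Eq.~\ref{eq:slimfam} to convert $c_0$ into $\extm$ in the limit, yielding $\partial_i - \tfrac{1}{4}\extm\bb{\Omega^{X/Y}_{ij}(x)}\xi^j$. Higher Taylor remainders in the Christoffels, and the off-diagonal coefficients $u^\alpha_i$ of the previous lemma, all acquire strictly positive powers of $\eps$ and drop out, so summing squares produces the Mehler operator appearing in $P_0$.

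Second, I would analyse the endomorphism piece $Q_0^{s/\eps^2} = c_0(\Omega^E) + [c_0(\nabla), c_0(\omega^{s/\eps^2})] + c_0(\omega^{s/\eps^2})^2$. Writing $\omega^{s/\eps^2} = \sum_i|s/\eps^2|^{(1-i)/2}\omega_i$ with $\omega_i \in \Forms^i(X;\End(E))$, the explicit $\eps$-power carried by the $\R^\times$-action on the $i$-form component is precisely what cancels the $\eps^{-k}$ produced when $U_\eps$ acts on the degree-$k$ part of $c_0(\omega_i)$; combined with Eq.~\ref{eq:slimfam} this converts each summand to $\extm(\omega_i^s)$ in the limit. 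The commutator term is handled identically, and $c_0(\Omega^E)$ already passes directly to $\extm(\Omega^E)$. Reassembling gives $Q_0^{s/\eps^2} \to \extm\bb{\curv{\Sc^s}}(x) = \extm\bb{(\Sc^s)^2}(x)$, the mass term in $P_0$. The scalar-curvature term $R^\pi/4$ is of Clifford degree zero and is suppressed by the rescaling exactly as in the proof of Lemma~\ref{lem:heatlim}.

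The main obstacle is the simultaneous bookkeeping of three distinct scalings---the $\R^\times$-action on $\Sc$, the algebraic rescaling $U_\eps$ on $\alim\Cliff(X)$, and the geometric rescaling $T^*_\eps$ on fibre coordinates---and verifying that they conspire so that no divergent terms survive while the finite residues are precisely those of $\curv{\Sc^s}$. This compatibility is ultimately the intertwining in Eq.~\ref{eq:actionintertwine}; without the $\R^\times$-action on superconnections the mass piece would diverge term by term. Once the cancellation and the surviving coefficients are identified, one assembles the three contributions to recover the stated formula for $P_0$.
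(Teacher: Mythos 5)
Your proposal is correct and follows exactly the route the paper takes: the paper's entire proof is the remark that the lemma "is proved entirely analogously to Lemma \ref{lem:heatlim}", i.e.\ the Getzler rescaling argument for a single Dirac operator transplanted to the fibre, using the family Weitzenb\"ock formula, normal coordinates with the $\alim\nabla^X$-trivialisation (so the $u^\alpha_i=O(|\xi|)$ terms drop out), and the interplay of $T^*_\eps$, $U_\eps$ and the $\R^\times$-action to turn Clifford multiplication into exterior multiplication and produce $\extm((\Sc^s)^2)(x)$. Your write-up simply supplies the details the paper leaves implicit (up to the overall quadratic $\eps$-power that is absorbed by the $\eps^2$ normalisation of $P_\eps$, as in the single-operator case), so there is nothing essentially different to compare.
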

\begin{proof}
 This lemma is proved entirely analogously to Lemma \ref{lem:heatlim}.
\end{proof}
Following the proof of Th.~\ref{th:localindex}, we next compute the heat kernel for $H_0$.
\begin{proposition}
The heat kernel for the operator
\[
 H_0=\frac{\partial}{\partial t}-\sum_{i,j}\left(\frac{\partial}{\partial \xi^i}-\frac{1}{4}e(\Omega^{X/Y}_{ij}(x)\xi^j\right)^2+\extm\left((\Sc^s)^2\right)(x)
\]
at the origin is given by the formula
\[
 \tilde{p}^0_{t,s}(0,0)=(4\pi t)^{-n/2}\sqrt{\det\left(\frac{t\Omega^\pi\!/2}{\sinh(t\Omega^\pi\!/2)}\right)}\,e^{-t(\Sc)^2}.
\]
\end{proposition}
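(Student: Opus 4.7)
The strategy is to split $H_0$ into a harmonic-oscillator piece and a potential piece that commute with one another, factor the heat semigroup accordingly, and then invoke Mehler's formula for the oscillator piece.

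First I would write $H_0=\frac{\partial}{\partial t}+K+V$, with
\[
K=-\sum_{i,j}\left(\frac{\partial}{\partial\xi^i}-\tfrac{1}{4}\extm(\Omega^{X/Y}_{ij}(x))\,\xi^j\right)^{\!2},\qquad V=\extm\bigl((\Sc^s)^2\bigr)(x).
\]
The key observation is that $V$ commutes with $K$. Indeed, $V$ is independent of $\xi$ (being evaluated at the fixed point $x$) and takes values in $\wedge^{\even}T_x^*X$, since $(\Sc^s)^2$ is an even superconnection curvature. Even forms are central in $\wedge^\bullet T_x^*X$, so $V$ commutes with exterior multiplication by the $2$-forms $\extm(\Omega^{X/Y}_{ij}(x))$, with the polynomial multipliers $\xi^j$, and with $\partial/\partial\xi^i$. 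Hence the semigroup factorises, $e^{-t(K+V)}=e^{-tK}\,e^{-tV}$, and it suffices to compute the kernel of $e^{-tK}$ at the diagonal point $(0,0)$ and then multiply by $e^{-tV}$.

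Second, I would apply Mehler's formula for a harmonic oscillator with matrix-valued ``magnetic field''. The antisymmetric matrix $R_{ij}:=\extm(\Omega^{X/Y}_{ij}(x))$ has entries that are $2$-forms on $X$; since all such entries are even forms they pairwise commute, so $R$ may be treated formally as a numerical antisymmetric matrix with nilpotent entries. Diagonalising $R$ into $2\times 2$ blocks over $\C$ reduces $K$ to a direct sum of one-dimensional harmonic oscillators, each of whose heat kernel is given by the classical one-dimensional Mehler formula. Assembling the blocks yields
\[
e^{-tK}(0,0)=(4\pi t)^{-n/2}\sqrt{\det\!\left(\frac{tR/2}{\sinh(tR/2)}\right)}.
\]
Identifying $R$ with $\Omega^\pi(x)$ (the vertical Levi-Civita curvature is by definition $\Omega^{X/Y}$) and multiplying by $e^{-tV}$ gives the stated formula.

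The genuinely technical point is the verification of Mehler's formula in this form-valued setting, but it is entirely standard (cf.\ \cite{BGV}, Ch.~4): one checks by direct differentiation that the right-hand side satisfies $(\partial_t+K)p=0$ with delta-function initial condition. The nilpotency of $R$ ensures that $(tR/2)/\sinh(tR/2)$ is polynomial in $R$, so the square root of its determinant is an unambiguous element of $\wedge^{\even}T_x^*X$ and no convergence issues arise. The only point demanding care is checking that the pairwise commutativity of the matrix entries of $R$ really does let the block-diagonalisation argument of the scalar case go through verbatim — which it does, since the commutativity is preserved under all algebraic manipulations used.
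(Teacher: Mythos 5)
Your proposal is correct and follows essentially the paper's own route: the paper simply invokes Mehler's formula for the model operator (exactly as in the single-operator case, Eq.~\ref{eq:limitkernel}), which is what you carry out — the potential $\extm\left((\Sc^s)^2\right)(x)$ is $\xi$-independent and acts by an even, hence central, element, so the semigroup factorises and the oscillator factor is the form-valued Mehler kernel, with the nilpotency of the curvature entries justifying the determinant expression (the clean verification being, as you note, direct differentiation rather than literal block-diagonalisation). The only discrepancy, $e^{-t(\Sc)^2}$ versus your $e^{-t(\Sc^s)^2}$, is a suppressed superscript in the paper's statement, not a gap in your argument.
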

We note that the small $t$ asymptotic expansion of $p^0_{t,s/t}(0,0)$ is precisely as in Prop.~\ref{prop:localfamilies}.

Let us now define 
\[
\tilde{p}^\eps_{s,t}(x,y)=\eps^nS^\eps p_{\eps^2t,s/\eps^2}(x,y)
\]
and note that this is annihilated by $H_\eps$ and converges to $\delta_x(y)$ as $t\to0$ in $H_k$ for all $k<-n/2$. By uniqueness of solutions this is then the heat kernel for  $H_\eps$. We now argue precisely as in the proof of Th.~\ref{th:localindex}. We write
\[
 p_{t,s}(x,y)=\sum_{I,J}p^{IJ}_{t,s}(x,y)e^If^J,
\]
 where $I$, $J$ are multi-indices. Then
\[
\tilde{p}^\eps_{t,t}(x,s)=\sum_{I,J}\eps^{n-|I|-|J|}p^{IJ}_{\eps^2t,s/\eps^2}(0,0)\hat{e}^If^J.
\]
An easy extension to the argument for a single Dirac operator shows that $p_{t,s/t}(0,0)$ has a small $t$ asymptotic expansion of the form
\[
 p_{t,s/t}(x,x)\sim t^{-n/2}\sum_{j,I,J}t^jA^{IJ}_je^If^J.
\]
Thus
\[
\tilde{p}^\eps_{t,s/t}(x,x)\sim\sum_{j,I,J}t^{j-n/2}\eps^{2j-|I|-|J|}A^{IJ}_j.
\]
We see that $A^{IJ}_j=0$ if $2j<|I|+|J|$, and that 
\[
 \sigma(p)=\sum_{j=0}^{\dim X/2}\sum_{\substack{I,J\\|I|+|J|=2j}}A^{IJ}_{j}=\Ahat(\Omega^\pi)\exp(-(\Sc^s)^2),
\]
as the small $t$ asymptotic expansion of $\tilde{p}^\eps_{t,s/t}$ converges to that of $\tilde{p}^0_{t,s/t}$ as $\eps\to0$.
This then proves the proposition.

There is a useful corollary to Th.~\ref{th:families}. Let $\pi:X\to Y$, $E\to X$ and $\Sc$ be as before. The transgression formula for superconnections shows \cite{BGV}, for $0<t<T$, 
\[
 \sCh\pi_!^T\Sc-\sCh\pi_!^t\Sc=-\ud\int_t^T\ud s\,\sTr\left(\frac{\partial \pi_!^s\Sc}{\partial s}e^{-(\pi_!^s\Sc)^2}\right).
\]
An immediate consequence of Th.~\ref{th:families} is that the left hand side converges as $t\to 0$. The argument in \cite{BGV} Ch.~10.5 goes through to show the following (again, we refer the reader to \cite{K1} for details).
\begin{theorem}\label{th:transgressfamilies}
 \begin{enumerate}
  \item The differential form 
\[
 \alpha(t)=\sTr\left(\frac{\partial \pi_!^t\Sc}{\partial s}e^{-(\pi_!^t\Sc)^2}\right)
\]
has an asymptotic expansion as $t\to 0$ of the form
\[
 \alpha(t)\sim\sum_{j\ge1}t^{j/2-1}\alpha_{j/2},
\]
with $\alpha_{j/2}\in\Forms(B)$.
\item \[\sCh\pi_!^t\Sc=(2\pi i)^{-n/2}\pi_*\Ahat(\Omega^\pi)\sCh\Sc-\ud\int_0^t\ud s\,\alpha(s),\]
where $n=\dim X/Y$.
 \end{enumerate}
\end{theorem}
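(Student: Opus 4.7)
The plan is to derive the asymptotic expansion in (1) by running the Getzler-type rescaling of the proof of Prop.~\ref{prop:localfamilies} on the transgression integrand, and then to obtain (2) as a direct consequence of (1), the transgression formula displayed immediately above the theorem, and Th.~\ref{th:families}.

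\emph{Proof of (1).} Let $\eta(u) = \pi_!^u\Sc$, so that $\alpha(t) = \sTr(\eta'(t)\,e^{-\eta(t)^2})$. Using the decomposition $\Sc = \nabla + \omega$ and the intertwining $\pi_!^t\Sc = (\pi_!\Sc^{1/t})^t$ (Eq.~\ref{eq:actionintertwine}), the operator $\eta'(t) \in \Forms(Y;\End(\pi_!E))^\odd$ is, on each fibre, a first-order differential operator whose $\bigwedge^k T^*Y$ component scales with a half-integer power of $t$ determined by $k$. Let $q_t(x,x')$ be the integral kernel of $\eta'(t)\cdot e^{-\eta(t)^2}$ restricted to a fibre $X_y$, so that by Eq.~\ref{eq:chernasintegral}
\[
\alpha(t)(y) = \int_{X_y}\ud x\;\sTr q_t(x,x).
\]
Fix $x \in X_y$ and transfer the geometry to $T_x(X_y)$ as in the proof of Prop.~\ref{prop:localfamilies}. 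Apply the rescaling $S_\eps = U_\eps T_\eps^*$ with $\eps = \sqrt{t}$, and form $\widetilde q_t(x,x) = \eps^n S_\eps q_t(x,x) S_\eps^{-1}$. The analysis of $\widetilde p_{t,1/t}$ in Prop.~\ref{prop:localfamilies} goes through mutatis mutandis for $\widetilde q_t$: one obtains a small-$t$ expansion with coefficients local in the geometry, and the Mehler-type kernel found in the $\eps \to 0$ limit supplies the leading order. Combining the Clifford-degree constraint on the supertrace (non-vanishing only on the top degree of $\Cliff(X/Y)$) with the rescaling $U_\eps$ kills every power of $t$ below $t^{-1/2}$, yielding the claimed expansion $\alpha(t) \sim \sum_{j \ge 1} t^{j/2-1}\alpha_{j/2}$.

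\emph{Proof of (2).} By (1), $\alpha(s) = O(s^{-1/2})$ as $s \to 0$, so $\int_0^T \alpha(s)\,\ud s$ converges absolutely. Rewriting the transgression formula as
\[
\sCh\pi_!^T\Sc = \sCh\pi_!^t\Sc - \ud\int_t^T \alpha(s)\,\ud s
\]
and letting $t \to 0$, Th.~\ref{th:families} identifies $\lim_{t \to 0}\sCh\pi_!^t\Sc = (2\pi i)^{-n/2}\pi_*\Ahat(\Omega^\pi)\sCh\Sc$, which, upon rearrangement, is (2). The interchange of $\ud$ with the improper integral is justified by the uniformity in $x$ of the expansion produced in (1).

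\emph{Main obstacle.} The technical heart lies in (1), specifically in pinning down the leading order as $t^{-1/2}$ rather than something more singular. As in the proof of Th.~\ref{th:localindex}, one must balance two competing effects: the rescaling $U_\eps$ introduces negative powers of $\eps$ proportional to form degree, while the supertrace kills all but the top Clifford-degree component of the kernel. The additional subtlety compared with Prop.~\ref{prop:localfamilies} is that $\eta'(t)$ is a sum of inhomogeneous terms with different scaling behaviour under the $\R^\times$-action, and one must verify that no individual component contributes a singularity worse than $t^{-1/2}$ after the fibre integration and the supertrace.
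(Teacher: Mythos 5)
Your proposal is correct and follows essentially the same route as the paper, which simply observes that the argument of \cite{BGV} Ch.~10.5 goes through: part (1) by applying the Getzler-type rescaling of Prop.~\ref{prop:localfamilies} to the kernel of $\frac{\partial\pi_!^t\Sc}{\partial t}\,e^{-(\pi_!^t\Sc)^2}$ and using the supertrace/Clifford-degree count to bound the singularity by $t^{-1/2}$, and part (2) by integrating the displayed transgression formula down to $t=0$ and identifying the limit via Th.~\ref{th:families}. The level of detail you leave implicit (the half-integer scaling of the components of the derivative term and the uniformity needed to exchange $\ud$ with the improper integral) matches what the paper itself defers to \cite{BGV} and \cite{K1}.
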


\section{Determinant line bundles}\label{sec:determinant}
Let $\pi:X\to Y$ be a Riemannian family, with compact and spin fibres. As discussed in the previous section, a complex and hermitian $\Zz$-graded
 vector bundle $E\to X$ with superconnection $\Sc$ gives rise to a family of Dirac operators $\Dirac^\pi(\Sc)$ parameterised by $Y$. In this section we wish to construct the determinant line bundle associated to this family. That is to say, we wish to associate to this data a graded geometric line bundle with a section, $\detline{\pi_!\Sc}\to Y$ in a canonical way, such that the section vanishes precisely when the Dirac operator on the fibre is not invertible. The curvature of the resultant line bundle will be computed by the right hand side of the families index theorem (Th.~\ref{th:families}), i.e. 
\[
\curv{\detline{\pi_!\Sc}}=(2\pi i)^{\frac{1}{2}\dim{X}/Y}\left[\Ahat(\Curv{\pi})\sCh(\Sc)\right]_\bb{2},
\]
and its holonomy by the $\eta$-invariant modulo $\Z$. We proceed as follows: we begin by examining determinant line bundles for finite dimensional $\Zz$-graded vector bundles $V\to X$ with superconnection; we then generalise the construction to give a geometric of the determinant line bundle for families of Dirac operators coupled to superconnections; finally, we compute the holonomy of the resultant line bundle.
\subsection{The finite-dimensional case}\label{ssec:findem}
Let $V\to X$ be a finite-dimensional, complex, and hermitian $\Zz$-graded vector bundle with unitary superconnection $\Sc$ over a smooth manifold $X$. Decompose the superconnection as 
\[
\Sc=\Dirac+\nabla + \sum_{i\ge2}\omega_i,
\]
where $\Dirac\in\End(V)$, $\nabla$ is a connection on $V$, and $\omega_i\in\Forms^i(X;\End(V))$. We decompose
\[
\Dirac=\begin{pmatrix}
&\dslash^*\\
\dslash &
\end{pmatrix}
\]
in terms of the grading on $V$.\footnote{The notation is for easy comparison with the infinite dimensional case, where $\Sc$ will be replaced by the pushed-forward superconnection $\pi_!\Sc$. We recall that the degree zero part of $\pi_!\Sc$ is the fibre Dirac operator.
} Here $\dslash\in\Omega^0(X,\Hom(V^0,\,V^1))$. We will associate to this data, in a functorial manner, a graded geometric line bundle, $\detline{\Sc}\to X$, with a section $\det(\dslash)$ that represents the determinant of $\dslash$, and has curvature $[\sCh(\Sc)]_\bb{2}$.

We define the line bundle
\[
\detline{\Sc}\equiv\bigwedge{}^\mathrm{top} V^0\stimes (\bigwedge{}^\mathrm{top} V^1)^{-1}.
\]
This is a line bundle over $X$, and we grade it by placing it in degree $\dim V^0-\dim V^1$ mod two.

Recalling the canonical isomorphism
\[
\Hom(V^0,V^1)=V^0\stimes (V^1)^*,
\]
we see that $\dslash\in \Gamma(V^0\stimes (V^1)^*)$, so naturally descends to a section $\det(\dslash)\in\detline{\Sc}$. The metric on $V$ also induces a metric on $\detline{\Sc}$.

A connection $\tilde{\nabla}$ is induced on $\detline{\Sc}$ from $\nabla$, the connection part of $\Sc$. Its curvature, however, is computed by $[\sCh(\nabla)]_\bb{2}$ -- we wish to have a connection with curvature computed by the two-form part of $\sCh(\Sc)$. To obtain this we use the transgression formula (Eq.~\ref{eq:transgression}). We begin by considering a general one parameter family of superconnections $\nabla + L_t$, where $\nabla$ is a connection, and $L_t$ is a one parameter family of odd endomorphism valued differential forms. The transgression formula then shows
\begin{equation}\label{eq:dlinetransgress}
 \frac{\ud}{\ud t}\sCh(\nabla+L_t)=-\ud\sTr\left(e^{-(\nabla+L_t)^2}\dot{L}_t\right).
 \end{equation}
Specialising this to two-forms, we see that
\begin{multline}\label{eq:transgressiontwo}
 \frac{\ud}{\ud t}\left[\sCh(\nabla+L_t)\right]_\bb{2}=\ud\bigg\{-\sTr e^{-[L_t]^2_\bb{0}}\left[\dot{L}_t\right]_\bb{1}-\sTr \left[e^{-L_t^2}\right]_\bb{1}\left[\dot{L}_t\right]_\bb{0}\\
+\sTr e^{-[L_t]^2_\bb{0}}\nabla[L_t]_\bb{0}\#e^{-[L_t]^2_\bb{0}}\left[\dot{L}_t\right]_\bb{0}\bigg\},
\end{multline}
where
\[
 e^AB\,\#\,e^C\equiv\int_0^1\ud s\,e^{sA}Be^{(1-s)C}.
\]
We now apply this to the family $\Sc^t$ (where we recall that this is defined in Eq.~\ref{eq:raction}).
\begin{eqnarray}
 \nonumber[\sCh\Sc]_\bb{2}-[\sCh\nabla]_\bb{2}&=&\int_0^1\ud t\,\frac{\ud}{\ud t}[\sCh\Sc^t]_\bb{2}\\
\nonumber&=&\frac{1}{2}\ud\int_0^1\ud t\sTr\left[e^{-t\Dirac^2}\nabla\Dirac\# e^{-t\Dirac^2}\Dirac\right]\\
\nonumber&=&\frac{1}{2}\ud\int_0^1\ud t\sTr\left[\nabla\Dirac e^{-t\Dirac^2}\Dirac\right]\\
\label{eq:curveq}&=&\frac{1}{2}\ud\sTr\left[\left(1-e^{-\Dirac^2}\right)\nabla\Dirac\Dirac^{-1}\right].
\end{eqnarray}
There are several things that may appear problematic in the calculation. The first concern is the possibility of divergences in the first line of the calculation. These can only enter in the terms in the superconnection of cohomological degree greater than or equal to two, so do not contribute.  The second apparent concern is the appearance of $\Dirac^{-1}$: it appears that $\Dirac$ needs to be invertible to make sense of the expression. However, expanding the exponential in Eq.~\ref{eq:curveq} as a power series in $\Dirac$, we see that the leading order term in Eq.~\ref{eq:curveq} is in fact degree one in $\Dirac$, and in this sense the last line is perfectly well defined even when $\Dirac$ is not invertible. 

In the light of the computation \ref{eq:curveq}, we define the connection
\begin{equation}\label{eq:finiteconnection}
 \nabla^{\detline{\Sc}}=\nabla^{\detline{\nabla}}+\frac{1}{2}\sTr\left[\left(1-e^{-\Dirac^2}\right)\nabla\Dirac\Dirac^{-1}\right].
\end{equation}
This is a unitary connection, as $\nabla^{\detline{\nabla}}$ is, and 
\[
 \frac{1}{2}\sTr\left[\left(1-e^{-\Dirac^2}\right)\nabla\Dirac\Dirac^{-1}\right]
\]
is imaginary. It's curvature is computed from $\sCh(\Sc)$:
\[
 \curv{\nabla^{\detline{\Sc}}}=\curv{\nabla^{\detline{\nabla}}}+\frac{1}{2}\ud\sTr\left[\left(1-e^{-\Dirac^2}\right)\nabla\Dirac\Dirac^{-1}\right]=[\sCh\Sc]_\bb{2}.
\]

We have now constructed the determinant line bundle $\detline{\Sc}\to X$, with section, metric and connection. The construction is functorial in the sense that it commutes with pull-backs. The curvature of $\detline{\Sc}$ with the constructed connection is $[\sCh(\Sc)]_\bb{2}$, as desired.
\subsection{Families of Dirac operators} 
We now turn our attention to the situation outlined at the beginning of the section. Let $\pi:X\to Y$ be a Riemannian fibration, with compact and spin fibres of dimension $n$. Let $E\to X$ be a finite-dimensional, complex and hermitian $\Zz$-graded vector bundle, and $\Sc$ a unitary superconnection on $E$. We wish to associate to this data a complex $\Zz$-graded line bundle $\detline{\pi_!\Sc}\to Y$, with section, metric, and connection that should be thought of as the determinant line bundle of the family of fibre Dirac operators $\dslash(\Sc)^\pi$. The determinant line bundle associated to a family of Dirac operators coupled to connections was first constructed by Quillen (for families of $\bar{\partial}$-operators \cite{Q2}) and Bismut and Freed (for families of Dirac operators coupled to ordinary connections \cite{BF1,BF2}). In that case the curvature was computed to be given by the degree two term in the Bismut theorem, and the holonomy by the adiabatic limit of an appropriate $\eta$-invariant mod $\Z$. We will essentially reproduce their construction for general families of Dirac operators coupled to superconnections, with one important modification: for a general superconnection the construction of the connection has to change. Our construction is motivated by the finite dimensional case considered in the previous subsection. In the end the curvature and the holonomy are computed by formulas analogous to those of Bismut and Freed \cite{BF1}.

We proceed as follows: we begin by reviewing the construction of Bismut and Freed, adapting it to our purpose in the obvious fashion. Having constructed a the determinant line bundle, with section and metric, we turn our attention to constructing the connection. Finally we compute the curvature and holonomy of the connection.

We begin by construction the line bundle, with its section. It will be convenient to introduce some notation. We write
\[
 \pi_!\Sc=\Dirac^\pi+\nabla+\sum_{i\ge2}\omega_i,
\]
with $\Dirac^\pi\in\Forms^0(Y;\,\End(\pi_!E))$ is the fibre Dirac operator coupled to $\Sc$,\footnote{Throughout this section we suppress the explicit dependence of the Dirac operator on the superconnection.} $\nabla$ is a connection on $\pi_!E$, and $\omega_i\in\Forms^i(Y;\,\End(\pi_!E))$. We write 
\[
 \Dirac^\pi=\begin{pmatrix}
                   &{\dslash^\pi}^*\\
\dslash^\pi&
                 \end{pmatrix}
\]
so that $\dslash^\pi_y\in\Hom(\pi_!E^0,\pi_!E^1)_y$ at any $y\in Y$.
The spectrum of ${\dslash^\pi_y}^*\dslash^\pi_y$ at any $y\in Y$ is non-negative, and discrete. We define an open cover $\{U_a\}_{a\in\R}$ of $Y$ by
\[
 U_a=\left\{y\in Y:a\not\in\spec{\dslash^\pi_y}^*\dslash^\pi_y\right\},
\]
and define the projections $\Pi_{(a,b)}\in\Gamma(\Hom(\pi_!E))$ which, at each $y\in Y$, projects $(\pi_!E)_y$ to the subspace spanned by eigenvectors with eigenvalues in $(a,\,b)$. We will denote the image of $\Pi_\bb{a,b}$ by $(\pi_!E)_\bb{a,b}$, and allow $a=-\infty$, $b=\infty$. Notice that $\Pi_\bb{a,b}$ respects the grading of $\pi_!E$, and if $b$ is bounded, $(\pi_!E)_{(a,b)}$ is finite dimensional at each $y\in Y$.\footnote{Of course the Laplacians have a non-negative spectrum, so one may replace $\Pi_{(a,b)}$, $a<0$, with $\Pi_{[0,b)}$ without loss of generality.}

Let us now restrict our attention to $U_a$. Over $U_a$ the bundle $\pi_!E$ decomposes as a direct sum
\begin{equation}\label{eq:pushforwarddecomp}
 \pi_!E=(\pi_!E)_\bb{-\infty,a}\oplus(\pi_!E)_\bb{a,\infty},
\end{equation}
and that $\dslash^\pi$ respects the decomposition, restricting to a linear map
\begin{equation}\label{eq:maprestrict}
 \dslash^\pi_\bb{-\infty,a}:(\pi_!E)_\bb{-\infty,a}^\even\to(\pi_!E)_\bb{-\infty,a}^\odd.
\end{equation}
We may thus define the $\Zz$-graded line bundle $\detline{\pi_!\Sc}\to U_a$ as
\[
 \detline{\pi_!\Sc}_\bb{-\infty,a}=\left(\det(\pi_!E)_\bb{-\infty,a}^\odd\right)\stimes\left(\det(\pi_!E)_\bb{-\infty,a}^\even\right)^*,
\]
where ``$\det$'' on the right hand side means the top exterior power.  As in the finite dimensional case,
\[
\dslash^\pi_\bb{-\infty,a}\in\Gamma\left((\pi_!E)_\bb{-\infty,a}^\odd\stimes\left((\pi_!E)_\bb{-\infty,a}^\even\right)^*\right)
\]
induces a section $\Det{\dslash^\pi}_\bb{-\infty,a}\in\Gamma\left(\detline{\pi_!\Sc}_\bb{-\infty,a}\right)$.

We wish to extend the line bundle and section to the whole of $Y$. Let us concentrate on the overlap $U_a\cap U_b$, and assume without loss of generality that $a<b$. On this open set $\pi_!E$ decomposes as
\[
 \pi_!E=(\pi_!E)_\bb{-\infty,a}\oplus(\pi_!E)_\bb{a,b}\oplus(\pi_!E)_\bb{b,\infty},
\]
with
\begin{equation}\label{eq:ubdecomp}
 (\pi_!E)_\bb{-\infty,b}=(\pi_!E)_\bb{-\infty,a}\oplus(\pi_!E)_\bb{a,b}.
\end{equation}
The fibrewise Dirac operator $\dslash^\pi$ respects the decomposition, and is an isomorphism on $(\pi_!E)_\bb{a,b}$. The line bundle $\detline{\pi_!\Sc}_\bb{a,b}$ is purely even, and has a canonical non-vanishing section $\Det{\dslash^\pi }_\bb{a,b}$, where everything is defined analogously to the line bundles in the previous paragraph. In the light of the decomposition Eq.~\ref{eq:ubdecomp}, this gives an isomorphism of $\Zz$-graded line bundles
\begin{eqnarray*}
 \Det{\dslash^\pi}_\bb{a,b}:\detline{\pi_!\Sc}_\bb{-\infty,a}\stackrel{\sim}{\rightarrow}\detline{\pi_!\Sc}_\bb{-\infty,b}
\end{eqnarray*}
defined by
\[
 \sigma\mapsto\sigma\stimes\det\dslash^\pi_\bb{a,b}.
\]
These are the required transition functions, allowing us to define a $\Zz$-graded complex line bundle $\detline{\pi_!\Sc}\to Y$, with section $\Det{\dslash^\pi}$. On any component of $Y$, the degree of the line bundle is given by $\Index\dslash^\pi\mod2$.

We now wish to give $\detline{\pi_!\Sc }$ a hermitian structure. We examine the line bundle on the trivialising neighbourhood $U_a$. The line bundle $\detline{\pi_!\Sc}_\bb{-\infty,a}$ has a natural hermitian structure, induced by the $\Ll$-metric on $\pi_!E$. We compare the metric induced on $\detline{\pi_!(\Sc)}_\bb{-\infty,a}$ with that on $\detline{\pi_!(\Sc)}_\bb{-\infty,b}$. Let $\sigma\in\detline{\pi_!\Sc}$ on the intersection $U_a\cap U_b$. By definition
\[
 \sigma|_b\cong\sigma|_a\stimes\Det{\dslash^\pi}_\bb{a,b},
\]
and thus
\[
 \|\sigma\|^2_\bb{-\infty,b}
=\|\sigma\|^2_\bb{-\infty,a}\prod_{\substack{\lambda\in\spec\dslash^*\dslash\\ a<\lambda<b}}\lambda.
\]
We remove this discrepancy by modifying the metric on $U_a$:
\begin{equation}\label{eq:norm}
 \|\sigma\|^2=\|\sigma\|^2_\bb{-\infty,a}\prod_{\substack{\lambda\in\spec\dslash^*\dslash\\ a<\lambda}}\lambda,
\end{equation}
and similarly on every other open set $U_b$. We may use $\zeta$-function renormalisation to make sense of the formula, as in \cite{BF1}.\footnote{Briefly sketched:

Associated to a second order elliptic operator $E$ one may define a $\zeta$-function
\[
 \zeta[E](s)=\sum_{\lambda\in\spec E}\lambda^s.
\]
This is well defined for $\mathrm{Re}(s)$ large enough, and admits an analytic continuation to a meromorphic function on $\C$. Differentiating the $\zeta$-function in $s$, one sees
\[
 \zeta[E]'(s)=\sum_{\lambda\in\spec E}\log(\lambda)\lambda^{s-1}.
\]
It is thus reasonable to define 
\[
 \prod_{\lambda\in\spec E}\lambda\equiv e^{\zeta[E]'(1)},
\]
as long as $\zeta[E](s)$ is analytic at $s=1$. Bismut and Freed \cite{BF1} show this to be the case for a general first order elliptic differential operator, and it is in this sense that we should interpret Eq.~\ref{eq:norm}. With this interpretation, we may now use Eq.~\ref{eq:norm} to give $\detline{\pi_!\Sc}$  a hermitian structure.}
\subsection{The connection}
We wish to endow $\detline{\pi_!\Sc}\to Y$ with a unitary connection $\nabla^{\detline{\pi_!\Sc}}$, with curvature
\[
 \curv{\nabla^{\detline{\pi_!\Sc}}}=(2\pi i)^{-n/2}\left[\pi_*\Ahat(\Omega^\pi)\sCh\Sc\right]_\bb{2}
\]
where, as usual, $n=\dim X/Y$.
We will do this in two stages: first, we will follow \cite{BF1} to construct a connection that may be seen as that induced by $\nabla^{\pi_!E}$. We will then use similar arguments to those section \ref{ssec:findem} to modify this connection and produce the connection we require.

To begin we restrict our attention to $U_a$. The connection $\nabla^{\pi_!E}$ restricts to a connection $\nabla^{\pi_!E}_\bb{-\infty,a}$ on $\detline{\pi_!\Sc}_\bb{-\infty,a}$. It is unitary with respect to the induced metric, but not with respect to the metric on $\detline{\pi_!\Sc}$. Let $\sigma\in\Gamma(\detline{\pi_!\Sc})$.  We calculate on the overlap $U_a\cap U_b$:
\begin{eqnarray*}
 \nabla^{\pi_!E}_\bb{-\infty,b}\sigma|_b&=&\nabla^{\pi_!E}_\bb{-\infty,b}\left(\sigma|_a\stimes\Det{\dslash^\pi}_\bb{a,b}\right)\\
&=&\left(\nabla^{\pi_!E}_\bb{-\infty,a}\sigma|_a\right)\stimes\Det{\dslash^\pi}_\bb{a,b}\\
&&\qquad\qquad+\Tr\left[(\nabla\dslash)\dslash^{-1}\right]_\bb{a,b}\sigma|_a\stimes{\Det{\dslash^\pi}}_\bb{a,b},
\end{eqnarray*}
where $\Tr$ is the \emph{ungraded} trace.

We follow Bismut and Freed, and define a connection on $\detline{\pi_!\Sc}$ by
\begin{equation}\label{eq:stagezero}
 \nabla^0_\bb{-\infty,a}=\nabla^{\pi_!E}_\bb{-\infty,a}+\alpha_a
\end{equation}
on $U_a$, where 
\[
\alpha_a=\Tr\left[(\nabla\dslash)\dslash^{-1}\right]_\bb{a,\infty}.
\]
Here $\alpha_a$ is defined as in \cite{BF1} using $\zeta$-function renormalisation, and the resulting connection is unitary and has curvature
\[
 \curv{\nabla^0}=\LIM_{t\to0}\left[\sCh{(\pi_!\Sc)^t}\right]_\bb{2}.
\]
Here ``$\LIM$" is defined as follows. Suppose $f(t)\in C^\infty((0,\infty))$, with a small $t$ asymptotic expansion
\[
f(t)\sim\sum_{k\ge k_0} a_k t^{k}.
\]
Then we define
\begin{equation}\label{eq:LIMdef}
\LIM_{t\to0}f(t)\equiv a_0.
\end{equation}

We will now modify the connection in Eq.~\ref{eq:stagezero} to obtain the connection we desire. We will follow an argument similar to that in  Sec.~\ref{ssec:findem}. We begin by introducing two ancillary one-forms:
\begin{eqnarray*}
 \beta_1(t)&=&\int_t^1\ud t\,\left[\sTr\left(e^{(\pi_!\Sc)^t}\frac{\ud(\pi_!\Sc)^t}{\ud t}\right)\right]_\bb{1},\\
 \beta_2(t)&=&\int_t^1\ud t\,\left[\sTr\left(e^{\pi^t_!\Sc}\frac{\ud \pi^t_!\Sc}{\ud t}\right)\right]_\bb{1}.
\end{eqnarray*}
A calculation similar to that in Eq.~\ref{eq:curveq} gives a closed expression for $\beta_1(t)$:
\[
 \beta_1(t)=\frac{1}{2}\sTr\left[e^{-\Dirac(\Sc^t)^2}\nabla\Dirac(\Sc)\Dirac^{-1}(\Sc)\right]_\bb{2}
-\frac{1}{2}\sTr\left[e^{-\Dirac(\Sc)^2}\nabla\Dirac(\Sc)\Dirac^{-1}(\Sc)\right]_\bb{2}.
\]
Unfortunately $\beta_2(t)$ may not be written so explicitly.
The transgression formula shows immediately that
\begin{eqnarray*}
\ud\beta_1(t)&=&\left[\sCh\pi_!\Sc\right]_\bb{2}-\left[\sCh(\pi_!\Sc)^t\right]_\bb{2},\\
\ud\beta_2(t)&=&\left[\sCh\pi_!\Sc\right]_\bb{2}-\left[\sCh(\pi^t_!\Sc)\right]_\bb{2}.
\end{eqnarray*}
Both of these one-forms are purely imaginary. We define the connection
\begin{equation}\label{eq:connectiondetline}
 \nabla^{\detline{\pi_!\Sc}}\equiv\nabla^0+\LIM_{t\to0}\beta_1(t)-\lim_{t\to0}\beta_2(t),
\end{equation}
where we recall $\LIM$ is the formal limit defined in Eq.~\ref{eq:LIMdef}, and the limit
$
\lim_{t\to0}\beta_2(t)
$
converges by virtue of Th.~\ref{th:transgressfamilies}.
By virtue of the unitarity of $\nabla^0$, the connection $\nabla^{\detline{\pi_!\Sc}}$ is a unitary connection, and the families index theorem (Th.~\ref{th:families}) shows that it has curvature
\begin{equation}\label{eq:linecurv}
 \curv{\nabla^{\detline{\pi_!\Sc}}}=(2\pi i)^{-n/2}\left[\pi_*\Ahat(\Omega^\pi)\sCh\Sc\right]_\bb{2},
\end{equation}
where $n=\dim X/Y$.
\subsection{The holonomy of the determinant line bundle}
We now calculate the holonomy of the determinant line bundle.
\begin{theorem}\label{th:holonomy}
 Let $X\to Y$ be a Riemannian family with compact and spin fibres, and $E\to X$ be a finite dimensional, complex, and hermitian $\Zz$-graded vector bundle with superconnection $\Sc$. The holonomy of the associated determinant line bundle is computed by the adiabatic limit of the $\tau$-invariant. More precisely, let $\gamma:[0,1]\to Y$ be a smooth loop (i.e. the map and all of its derivatives agree at the end points). Then
\[
 \Hol{\gamma}{\detline{\pi_!\Sc}}=\alim\tau_{\pi^{-1}\gamma}^{-1}(\Sc),
\]
where the $\tau$-invariant may be computed with any choice of metric on the base.
\end{theorem}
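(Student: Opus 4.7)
The plan is to reduce to a loop in the base, apply the Bismut--Freed holonomy theorem to the intermediate connection $\nabla^0$ of Eq.~\eqref{eq:stagezero}, and identify the explicit correction $\LIM_{t\to0}\beta_1(t)-\lim_{t\to0}\beta_2(t)$ with the Chern--Simons piece in the definition of $\tau_Z(\tilde\Sc)$. First, pull the data $\pi:X\to Y$, $E\to X$, $\Sc$ back along $\gamma$ to obtain a Riemannian family $\tilde\pi:Z\to S^1$ with $Z=\pi^{-1}\gamma$, pulled-back bundle $\tilde E\to Z$ and unitary superconnection $\tilde\Sc$. All constructions of Section~\ref{sec:determinant} are natural, so $\gamma^*\detline{\pi_!\Sc}\cong\detline{\tilde\pi_!\tilde\Sc}$ as geometric line bundles with connection, and the holonomy along $\gamma$ equals the holonomy around $S^1$ of the latter. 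The connection $\nabla^0$ is by construction the Bismut--Freed connection associated to the family of fibre Dirac operators $\dslash^{\tilde\pi}(\tilde\Sc)$, equipped with the $L^2$ metric and the $L^2$ connection $\nabla^{\tilde\pi_!\tilde E}$ built from the connection part $\nabla$ of $\tilde\Sc$, so \cite{BF1,BF2} applies verbatim to give $\Hol{S^1}{\nabla^0}=\alim\tau_Z(\nabla)^{-1}$.

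By Eq.~\eqref{eq:connectiondetline}, it remains to establish
\[
\oint_{S^1}\!\big(\LIM_{t\to0}\beta_1(t)-\lim_{t\to0}\beta_2(t)\big)
=(2\pi i)^{(1-\dim Z)/2}\!\int_Z\Ahat(\Omega^Z)\,\alpha[\tilde\Sc,\nabla]
\pmod{2\pi i\Z}
\]
in the adiabatic limit, since by Def.~\ref{def:tau} this is precisely what is needed to convert $\tau_Z(\nabla)^{-1}$ into $\tau_Z(\tilde\Sc)^{-1}$. I would approach this by a transgression-of-transgression argument: interpolate $\Sc_u=(1-\rho(u))\tilde\Sc+\rho(u)\nabla$ as in Def.~\ref{def:chern-simons} and apply Th.~\ref{th:transgressfamilies} to each family $\pi_!^t\Sc_u$. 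The local families index theorem (Prop.~\ref{prop:localfamilies}) identifies the constant term in the small-$t$ asymptotics of the transgression integrand with the fibre pushforward of $\Ahat(\Omega^\pi)\sTr(e^{-\Sc_u^2}\dot\Sc_u)$; integrating in $u\in[0,1]$ and around $S^1$, Fubini combines the fibre pushforward with the $S^1$-integral into integration over the total space $Z$, recovering the right-hand side.

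The main obstacle is the $\beta_2$ term, which, unlike the $\beta_1$ term of Eq.~\eqref{eq:curveq}, has no closed-form expression and is sensitive to the full heat kernel of $(\pi_!\Sc)^t$: justifying the exchange of $\lim_{t\to0}$, the $S^1$-integral and the adiabatic limit requires uniform heat-kernel estimates in the spirit of Prop.~\ref{lem:kerneleps} and Prop.~\ref{prop:localfamilies}, adapted to a family of superconnections depending on the interpolation parameter $u$. The geometric input that makes this plausible is Rem.~\ref{rem:alimdirac}, which identifies $\alim\pi_!\tilde\Sc$ with the Dirac operator on $Z$ coupled to $\tilde\Sc$, so that in the adiabatic limit the two pieces of the correction collapse onto precisely the one-dimension-lower integral $\int_Z\Ahat(\Omega^Z)\alpha[\tilde\Sc,\nabla]$ that appears as the boundary term in Th.~\ref{thm:APS}.
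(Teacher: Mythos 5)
There is a genuine gap, and it sits at the first and load-bearing step: the claim that $\Hol{S^1}{\nabla^0}=\alim\tau_Z(\nabla)^{-1}$ ``verbatim'' from \cite{BF1,BF2}. The Bismut--Freed holonomy theorem is a statement about the determinant line of a family of Dirac operators coupled to an \emph{ordinary} unitary connection, with Quillen metric and the connection built from that same data. Here the line bundle, its metric, and $\nabla^0$ are all built from the superconnection-coupled fibre operators $\dslash^\pi(\Sc)$: the spectral projections $\Pi_\bb{a,b}$ and the $\zeta$-renormalised term $\alpha_a=\Tr[(\nabla\dslash)\dslash^{-1}]_\bb{a,\infty}$ use the spectrum of $\dslash^\pi(\Sc)$, and the degree-one part ``$\nabla$'' of $\pi_!\Sc$ contains $[\pi_!\omega]_\bb{1}$, not just the $\Ll$-connection coming from the connection part of $\Sc$. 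So Bismut--Freed does not apply as stated; and even a hypothetical extension of their theorem to this family would compute the holonomy by the adiabatic limit of a \emph{spectral} $\eta$-invariant of $\Dirac_Z(\Sc)$ (in the spirit of Bismut--Cheeger), not by $\tau_Z(\nabla)$, and certainly not by the geometric $\tau$ of Def.~\ref{def:tau} -- the whole point of Sec.~\ref{sec:boundary} is that these differ because the spectral definition ignores the $\R^\times$-action. Consequently the arithmetic reduction that follows (``it remains to show $\oint(\LIM_{t\to0}\beta_1-\lim_{t\to0}\beta_2)$ equals the Chern--Simons integral mod $2\pi i\Z$'') is not a smaller lemma but essentially the theorem itself, relocated. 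Moreover your proposed proof of that identity -- transgression-of-transgression, Fubini, and uniform heat-kernel estimates -- is a purely local/analytic argument and has no mechanism to produce a statement that holds only modulo $2\pi i\Z$; the integer ambiguity must enter through an index-theoretic input on a bounding manifold, which your scheme never invokes.

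Compare with the paper's route, which avoids ever computing $\Hol{}{\nabla^0}$: restrict to $\pi:X\to S^1$, form the mapping cylinder $X\times[0,1]\to S^1\times[0,1]$ with the interpolating superconnection $\tilde\Sc=(1-t)\nabla+t\Sc$, and apply Stokes to the curvature formula (Eq.~\ref{eq:linecurv}) to compare $\log\Hol{S^1}{\detline{\pi_!\Sc}}$ directly with $\log\Hol{S^1}{\detline{\pi_!\nabla}}$. The APS theorem (Th.~\ref{thm:APS}) applied on $X\times[0,1]$ converts the bulk integral of $\Ahat\,\sCh\tilde\Sc$ into the ratio $\tau_X(\Sc)/\tau_X(\nabla)$ modulo integers -- this is where the mod-$\Z$ structure comes from -- and the adiabatic factorisation $\alim\Ahat(\Curv{X})=\pi^*\Ahat(\Curv{Y})\wedge\Ahat(\Curv{\pi})$ matches the two integrals. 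Only at the very end is Bismut--Freed invoked, and only for the honest connection case $\detline{\pi_!\nabla}$, where it genuinely applies. If you want to salvage your decomposition, you would first need to prove a holonomy theorem for $\nabla^0$ itself (a superconnection analogue of Bismut--Freed), which is a substantial new result and would still leave you with the spectral-versus-geometric $\tau$ discrepancy to bridge.
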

\begin{proof}
 We will construct a mapping cylinder relating our situation to the analogous situation where the superconnection is an ordinary connection, and then use the theorem of Bismut and Freed \cite{BF2} for the holonomy of determinant line bundles of families of Dirac operators coupled to connections.

Without loss of generality, we may restrict our attention to Riemannian maps $\pi:X\to S^1$, with the standard parameter $\theta$ and metric on $S^1$. We endow the circle with the bounding spin structure. We form the associated family $\tilde{\pi}:X\times[0,1]\to S^1\times[0,1]$, with the $\Zz$-graded vector bundle $E\times[0,1]\to X\times[0,1]$. Writing $t$ as the parameter in $[0,1]$, we endow $E\times[0,1]$ with the superconnection $\tilde{\Sc}=(1-t)\nabla+t\Sc$, where we have written $\Sc=\nabla+\omega$. By Stokes' theorem
\[
 2\pi i \int_{S^1\times[0,1]}\curv{\nabla^{\detline{\pi_!\Sc}}}=\log\Hol{S^1}{\detline{\pi_!\Sc}}-\log\Hol{S^1}{\detline{\pi_!\nabla}}.
\]
We recall
\[
\int_{S^1\times[0,1]}\curv{\nabla^{\detline{\pi_!\Sc}}}=(2\pi i)^{-n/2}\int_{S^1\times[0,1]}\pi_*\Ahat(\Omega^{\tilde{\pi}})\sCh\Sc.
\]
On the other hand, applying the APS theorem we see that
\begin{multline*}
(2\pi i)^{-1}\left(\log\tau_X(\Sc)-\log\tau_X(\nabla)\right)\\
+(2\pi i)^{-n/2}\int_{X\times[0,1]}\Ahat\left(\Omega^{X\times[0,1]}\right)\sCh\Sc=0\mod1.
\end{multline*}
But
\begin{eqnarray*}
 \alim\int_{X\times[0,1]}\Ahat\left(\Omega^{X\times[0,1]}\right)\sCh\Sc&=&\int_{S^1\times[0,1]}\!\!\Ahat\left(\Omega^{S^1\times[0,1]}\right)\,\pi_*\Ahat\left(\Omega^{Y\times[0,1]}\right)\sCh\Sc\\
&=&\int_{S^1\times[0,1]}\!\!\pi_*\Ahat\left(\Omega^{Y\times[0,1]}\right)\sCh\Sc.
\end{eqnarray*}
But then
\[
 \left(\Hol{S^1}{\Sc}\right)\left(\alim\tau_X(\Sc)\right)= \left(\Hol{S^1}{\nabla}\right)\left(\alim\tau_X(\nabla)\right).
\]
Applying the theorem of Bismut and Freed \cite{BF2} yields the result.
\end{proof}
\subsection{Summary}
We have now produced a complex $\Zz$-graded line bundle $\detline{\pi_!\Sc}\to Y$ with section, metric (defined by Eq.~\ref{eq:norm}) and connection $\nabla^{\detline{\pi_!\Sc}}$ (defined by Eq.~\ref{eq:connectiondetline}) associated to the data of a Riemannian family with compact and spin fibres $\pi:X\to Y$, and a finite dimensional, complex and hermitian $\Zz$-graded vector bundle $E\to X$ with unitary superconnection $\Sc$. The curvature of the line bundle is computed by right hand side of the families index theorem (Eq.~\ref{eq:linecurv}), and the holonomy is computed by an appropriate $\eta$-invariant. Furthermore, it is easy to see that the construction is natural in the sense that commutes with pullbacks.
\bibliographystyle{amsplain}
\bibliography{sc}
\end{document}